\numberwithin{equation}{section}
\newtheorem{theorem}[equation]{Theorem}
\newtheorem*{theorem*}{Theorem} \newtheorem{lemma}[equation]{Lemma}
\newtheorem*{conjecture*}{Mamma Conjecture}
\newtheorem*{conjecture1*}{Mamma Conjecture (revisited)}
\newtheorem{proposition}[equation]{Proposition}
\newtheorem{corollary}[equation]{Corollary}
\newtheorem*{corollary*}{Corollary}
\theoremstyle{remark}
\newtheorem{definition}[equation]{Definition}
\newtheorem{example}[equation]{Example}
\newtheorem{notation}[equation]{Notation}
\theoremstyle{remark}
\newtheorem{remark}[equation]{Remark}
\newcommand{\cA}{{\mathcal A}}
\newcommand{\cB}{{\mathcal B}}
\newcommand{\cC}{{\mathcal C}}
\newcommand{\cD}{{\mathcal D}}
\newcommand{\cE}{{\mathcal E}}
\newcommand{\cF}{{\mathcal F}}
\newcommand{\cL}{{\mathcal L}}
\newcommand{\cO}{{\mathcal O}}
\newcommand{\cP}{{\mathcal P}}
\newcommand{\cT}{{\mathcal T}}
\newcommand{\bbA}{\mathbb{A}}
\newcommand{\bbB}{\mathbb{B}}
\newcommand{\bbC}{\mathbb{C}}
\newcommand{\bbF}{\mathbb{F}}
\newcommand{\bbP}{\mathbb{P}}
\newcommand{\bbQ}{\mathbb{Q}}
\newcommand{\bbZ}{\mathbb{Z}}
\DeclareMathOperator{\id}{id}
\newcommand{\dgcat}{\mathrm{dgcat}} 
\newcommand{\perf}{\mathrm{perf}}
\newcommand{\dg}{\mathrm{dg}}
\newcommand{\Hom}{\mathrm{Hom}}
\newcommand{\rep}{\mathrm{rep}}
\newcommand{\Hmo}{\mathrm{Hmo}}
\newcommand{\op}{\mathrm{op}}
\newcommand{\too}{\longrightarrow}
\let\oldmarginpar\marginpar
\def\marginpar#1{\oldmarginpar{\tiny #1}}
\begin{document}

\title[]{Noncommutative Riemann hypothesis}
\author{Gon{\c c}alo~Tabuada}
\address{Gon{\c c}alo Tabuada, Mathematics Institute, Zeeman Building, University of Warwick, Coventry CV4 7AL UK.}
\email{goncalo.tabuada@warwick.ac.uk}
\urladdr{https://homepages.warwick.ac.uk/staff/Goncalo.Tabuada/}

\date{\today}

\abstract{In this note, making use of noncommutative $l$-adic cohomology, we extend the generalized Riemann hypothesis from the realm of algebraic geometry to the broad setting of geometric noncommutative schemes in the sense of Orlov. As a first application, we prove that the generalized Riemann hypothesis is invariant under derived equivalences and homological projective duality. As a second application, we prove the noncommutative generalized Riemann hypothesis in some new cases.}}

\maketitle



\section{Introduction and statement of results}\label{sec:intro}
Let $k$ be a global field and $\Sigma_k$ its (infinite) set of non-archimedean places.

\smallskip

Let $X$ be a smooth proper $k$-scheme and $0\leq w \leq 2\mathrm{dim}(X)$ an integer. Following Serre's foundational work \cite{Serre1,Serre2} (consult also Manin \cite{Manin}), consider the $L$-function $L_w(X;s):=\prod_{\nu \in \Sigma_k} L_{w, \nu}(X;s)$ of weight $w$. As proved in {\em loc. cit.}, this infinite product converges absolutely in the half-plane $\mathrm{Re}(s)> \frac{w}{2} +1$ and is non-zero in this region. Moreover, the following two conditions are expected to hold:

\smallskip

\noindent
$(\mathrm{C}1)$ The $L$-function $L_w(X;s)$ admits a (unique) meromorphic continuation to the entire complex plane.
\noindent
$(\mathrm{C}2)$ When $\mathrm{char}(k)=0$, the only possible pole of $L_w(X;s)$ is located at $s=\frac{w}{2}+1$ with $w$ even.

\smallskip

When $\mathrm{char}(k)=0$, the conditions $(\mathrm{C}1)\text{-}(\mathrm{C}2)$ have been proved in many cases: certain $0$-dimensional schemes, certain elliptic curves, certain modular curves, certain abelian varieties, certain varieties of Fermat type, certain Shimura varieties, etc. When $\mathrm{char}(k)>0$, condition $(\mathrm{C}1)$ follows from Grothendieck's work~\cite{Grothendieck}. 

The following conjecture, which implicitly assumes condition $(\mathrm{C}1)$, goes back to the work \cite{Riemann} of Riemann. 

\smallskip

\noindent
\underline{{\it Generalized Riemann hypothesis}} $\mathrm{R}_w(X)$: {\it All the zeros of the $L$-function $L_w(X;s)$ that are contained in the critical strip $\frac{w}{2}< \mathrm{Re}(s) < \frac{w}{2}+1$ lie in the vertical line $\mathrm{Re}(s)=\frac{w+1}{2}$.}

\smallskip

The generalized Riemann hypothesis play a central role in mathematics. For example, when $\mathrm{char}(k)=0$ and $X=\mathrm{Spec}(k)$, the conjecture $\mathrm{R}_0(X)$ reduces to the classical {\em extended Riemann hypothesis} $\mathrm{ERH}_k$, i.e., all the zeros of the Dedekind zeta function $\zeta_k(s):=\sum_{I \triangleleft \cO_k}\frac{1}{N(I)^s}$ that are contained in the critical strip $0 < \mathrm{Re}(s) < 1$ lie in the vertical line $\mathrm{Re}(s)=\frac{1}{2}$; note that in the particular case where $k=\bbQ$, $\mathrm{ERH}_k$ is the famous Riemann hypothesis. The status of the generalized Riemann conjecture depends drastically on the characteristic of $k$. On the one hand, when $\mathrm{char}(k)=0$, no cases have been proved. On the other hand, when $\mathrm{char}(k)>0$, the generalized Riemann hypothesis follows from Deligne's work \cite{Deligne,Weil1}.  

\smallskip

Now, let $\cA$ be a geometric noncommutative $k$-scheme in the sense of Orlov; consult \S\ref{sub:Orlov} below. A standard example is the canonical dg enhancement $\perf_\dg(X)$ of the derived category of perfect complexes $\perf(X)$ of a smooth proper $k$-scheme $X$ (consult Keller's survey \cite[\S4.6]{Keller}); consult \S\ref{sec:applications-2} below for further examples. In \S\ref{sec:NC-L} below, making use of noncommutative $l$-adic cohomology, we construct the noncommutative counterparts $L_{\mathrm{even}}(\cA;s):=\prod_{\nu \in \Sigma_k} L_{\mathrm{even}, \nu}(\cA;s)$ and $L_{\mathrm{odd}}(\cA;s):=\prod_{\nu \in \Sigma_k} L_{\mathrm{odd}, \nu}(\cA;s)$ of the classical $L$-functions. Moreover, we prove the following noncommutative counterpart of Serre's convergence result:
\begin{theorem}\label{thm:convergence}
The infinite product $L_{\mathrm{even}}(\cA;s)$, resp. $L_{\mathrm{odd}}(\cA;s)$, converges absolutely in the half-plane $\mathrm{Re}(s)>1$, resp. $\mathrm{Re}(s)>\frac{3}{2}$, and is non-zero in this region.
\end{theorem}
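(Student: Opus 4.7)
The plan is to reduce the statement to Serre's classical convergence theorem in two steps, using Orlov's geometricity of $\cA$ and the structure of the noncommutative $l$-adic cohomology built in \S\ref{sec:NC-L}.

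First I would reduce to the commutative case. By Orlov's definition recalled in \S\ref{sub:Orlov}, $\cA$ is by construction a semi-orthogonal direct summand of $\perf_\dg(X)$ for some smooth proper $k$-scheme $X$. Functoriality of noncommutative $l$-adic cohomology then identifies $H^{\mathrm{even}/\mathrm{odd}}_{\mathrm{nc}}(\cA_\nu)$ with a Frobenius-stable direct summand of $H^{\mathrm{even}/\mathrm{odd}}_{\mathrm{nc}}(\perf_\dg(X)_\nu)$ at each place $\nu\in\Sigma_k$. Expanding the logarithm of a local Euler factor as $-\log\det(1-F_\nu q_\nu^{-s}\mid V)=\sum_{i,n}\alpha_{\nu,i}^n q_\nu^{-ns}/n$, the containment of multisets of Frobenius eigenvalues implies that absolute convergence and non-vanishing of the Euler product for $\perf_\dg(X)$ in a given half-plane transfer to $\cA$ in the same half-plane. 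It therefore suffices to prove the theorem when $\cA=\perf_\dg(X)$.

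Second I would decompose the noncommutative $L$-functions of $\perf_\dg(X)$ into classical ones. The noncommutative $l$-adic cohomology of $\perf_\dg(X)$ should admit, compatibly with Frobenius, a Hochschild--Kostant--Rosenberg type splitting
$$H^{\mathrm{even}}_{\mathrm{nc}}(\perf_\dg(X)_\nu)\simeq\bigoplus_{w=0}^{\dim(X)}H^{2w}_l(X)(w),\qquad H^{\mathrm{odd}}_{\mathrm{nc}}(\perf_\dg(X)_\nu)\simeq\bigoplus_{w=0}^{\dim(X)-1}H^{2w+1}_l(X)(w),$$
in which the Tate twists $(w)$ get absorbed into the normalization of the local Euler factors $L_{\mathrm{even}/\mathrm{odd},\nu}$ set up in \S\ref{sec:NC-L}. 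Taking determinants and assembling over $\nu\in\Sigma_k$ yields the finite factorizations
$$L_{\mathrm{even}}(\perf_\dg(X);s)=\prod_{w=0}^{\dim(X)}L_{2w}(X;s+w),\qquad L_{\mathrm{odd}}(\perf_\dg(X);s)=\prod_{w=0}^{\dim(X)-1}L_{2w+1}(X;s+w).$$
By Serre's classical theorem each $L_{2w}(X;s+w)$ converges absolutely and is non-zero on $\mathrm{Re}(s+w)>w+1$, i.e., on $\mathrm{Re}(s)>1$; likewise each $L_{2w+1}(X;s+w)$ does so on $\mathrm{Re}(s)>3/2$. The required bounds for $\perf_\dg(X)$ follow at once, and by the first step they propagate to $\cA$.

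The main obstacle is the second step: establishing, with full Frobenius-equivariance and uniformly in $\nu\in\Sigma_k$, the above HKR-type identification of the noncommutative $l$-adic cohomology of $\perf_\dg(X)$ as a Tate-twisted direct sum of the classical $l$-adic cohomologies of $X$. This is a purely structural property of the construction of noncommutative $l$-adic cohomology and its associated $L$-functions carried out in \S\ref{sec:NC-L}. Once it is in hand, Theorem~\ref{thm:convergence} becomes a formal consequence of Serre's classical convergence theorem combined with the stability of Euler-product convergence under passage to Frobenius-stable direct summands.
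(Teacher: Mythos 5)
Your proposal is correct and reaches the conclusion by a route genuinely different from the paper's. The structural inputs you identify are the same ones the paper uses: the semi-orthogonal summand property from Orlov's definition, and the HKR-type identification of noncommutative $l$-adic cohomology with Tate-twisted \'etale cohomology, which is exactly the paper's Proposition~\ref{prop:etale}; the factorizations $L_{\mathrm{even}}(\perf_\dg(X);s)=\prod_{w\,\mathrm{even}}L_w(X;s+\tfrac{w}{2})$ and $L_{\mathrm{odd}}(\perf_\dg(X);s)=\prod_{w\,\mathrm{odd}}L_w(X;s+\tfrac{w-1}{2})$ you write down are precisely the paper's Proposition~\ref{prop:equalities}. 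Where you diverge is the final step: you invoke Serre's classical convergence theorem for each of the finitely many shifted factors, together with the elementary fact that absolute convergence and non-vanishing of an Euler product are inherited by Frobenius-stable direct summands. The paper does not cite Serre; after extracting the same Deligne eigenvalue and dimension bounds (Lemmas~\ref{lem:equalities} and~\ref{lem:eigenvalue}, Proposition~\ref{prop:inequalities}), it reproves the convergence from scratch via a Dirichlet-series estimate (Lemmas~\ref{lem:exercise1}--\ref{lem:key}) that compares the relevant double sum directly to the Dedekind zeta function of $k$. Both routes ultimately rest on Deligne's Weil~I plus Proposition~\ref{prop:etale}; yours is shorter and makes the critical exponents $1$ and $\tfrac{3}{2}$ conceptually transparent, whereas the paper's is self-contained and avoids importing Serre's theorem as a black box.
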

Similarly to the above condition $\mathrm{(C1)}$, it is expected that the noncommutative $L$-functions $L_{\mathrm{even}}(\cA;s)$ and $L_{\mathrm{odd}}(\cA;s)$ admit a (unique) meromorphic continuation to the entire complex plane. Under this assumption, the generalized Riemann hypothesis admits the following noncommutative counterpart:

\medskip

\noindent
\underline{{\it Noncommutative generalized Riemann hypothesis}} $\mathrm{R}_{\mathrm{even}}(\cA)$ and $\mathrm{R}_{\mathrm{odd}}(\cA)$: {\it All the zeros of the noncommutative $L$-function $L_{\mathrm{even}}(\cA;s)$, resp. $L_{\mathrm{odd}}(\cA;s)$, that are contained in the critical strip $0< \mathrm{Re}(s) < 1$, resp. $\frac{1}{2}< \mathrm{Re}(s) < \frac{3}{2}$, lie in the vertical line $\mathrm{Re}(s)=\frac{1}{2}$, resp. $\mathrm{Re}(s)=1$.}

\smallskip

The noncommutative (generalized) Riemann hypothesis was originally envisioned by Kontsevich in his seminal talks \cite{Hodge,IAS}. The next result relates this conjecture with the generalized Riemann hypothesis:

\begin{theorem}\label{thm:main}
Given a smooth proper $k$-scheme $X$, we have the following implications:
\begin{eqnarray}\label{eq:implications}
\{\mathrm{R}_w(X)\}_{w\, \mathrm{even}} \Rightarrow \mathrm{R}_{\mathrm{even}}(\perf_\dg(X)) && \{\mathrm{R}_w(X)\}_{w\, \mathrm{odd}} \Rightarrow \mathrm{R}_{\mathrm{odd}}(\perf_\dg(X))\,.
\end{eqnarray}
When $\mathrm{char}(k)>0$, the converse implications of \eqref{eq:implications} hold. Moreover, when $\mathrm{char}(k)=0$ and the $L$-functions $\{L_w(X;s)\}_{0\leq w \leq 2\mathrm{dim}(X)}$ satisfy condition $(\mathrm{C}2)$, the converse implications of \eqref{eq:implications} also hold. 
\end{theorem}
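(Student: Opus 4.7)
The strategy is to reduce Theorem~\ref{thm:main} to an explicit factorization of the noncommutative $L$-functions of $\perf_\dg(X)$ into shifted classical $L$-functions of $X$. Using that the noncommutative $l$-adic cohomology of $\perf_\dg(X)$ constructed in \S\ref{sec:NC-L} should split, according to parity, as a direct sum of Tate-twisted classical $l$-adic cohomology groups $H^w(X_{\bar k};\bbQ_l)$, where in each parity the Tate twist is chosen so as to normalize all Frobenius eigenvalues to a common Weil weight, a place-by-place comparison of local Euler factors is expected to yield
\begin{equation*}
L_{\mathrm{even}}(\perf_\dg(X);s)=\prod_{w\,\mathrm{even}}L_w(X;s+w/2),\qquad L_{\mathrm{odd}}(\perf_\dg(X);s)=\prod_{w\,\mathrm{odd}}L_w(X;s+(w-1)/2),
\end{equation*}
where $w$ ranges over $\{0,1,\ldots,2\mathrm{dim}(X)\}$. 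Under these shifts the classical critical strip $w/2<\mathrm{Re}(s)<w/2+1$ and conjectural zero line $\mathrm{Re}(s)=(w+1)/2$ of each $L_w$ are mapped onto the noncommutative critical strips and zero lines appearing in $\mathrm{R}_{\mathrm{even}/\mathrm{odd}}$, which is compatible with the convergence regions asserted in Theorem~\ref{thm:convergence}.

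Granted the factorization, the forward implications \eqref{eq:implications} are immediate: since at a point where every factor of a finite product of meromorphic functions is non-zero the product is also non-zero, any zero of $L_{\mathrm{even}}(\perf_\dg(X);s)$ (resp.\ $L_{\mathrm{odd}}(\perf_\dg(X);s)$) inside the noncommutative critical strip must be a zero of some $L_w(X;s+w/2)$ (resp.\ $L_w(X;s+(w-1)/2)$), and under $\mathrm{R}_w(X)$ such a zero lies on the expected vertical line.

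For the converse, the subtle issue is potential zero/pole cancellation: a zero of a single factor off the critical line could in principle be masked inside the product by a coincident pole of another factor. The decisive step is therefore to verify that no factor possesses a pole inside the \emph{open} noncommutative critical strip. In characteristic zero this is exactly what condition $(\mathrm{C}2)$ delivers: the only possible pole of $L_w(X;s)$ sits at $s=w/2+1$, which after the shift $s\mapsto s+w/2$ lands at $s=1$, i.e., on the right boundary of $0<\mathrm{Re}(s)<1$ rather than inside it (and for odd $w$ there is no pole at all, so after the shift $s\mapsto s+(w-1)/2$ the open strip $1/2<\mathrm{Re}(s)<3/2$ is again pole-free). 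In positive characteristic the analogous statement is immediate from Grothendieck's rationality of $L_w(X;s)$ and Deligne's Weil~II estimates on the absolute values of its zeros and poles \cite{Deligne,Grothendieck,Weil1}. With no pole available for cancellation inside the open strip, the order of vanishing of the noncommutative $L$-function at any point equals the sum of the orders of vanishing of the shifted classical factors; the noncommutative Riemann hypothesis therefore forces $\mathrm{R}_w(X)$ for every $w$ of the corresponding parity.

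The main obstacle is the first step. Rigorously establishing the factorization requires a careful comparison, at each non-archimedean place $\nu\in\Sigma_k$, between the Frobenius action on the noncommutative $l$-adic cohomology of $\perf_\dg(X)$ built in \S\ref{sec:NC-L} and the graded pieces, with their appropriate Tate twists, of the classical $l$-adic cohomology of $X$. Once that comparison is in place, the analytic manipulations above are essentially formal.
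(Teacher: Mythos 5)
Your proposal follows the paper's own proof almost exactly: you identify the key factorization $L_{\mathrm{even}}(\perf_\dg(X);s)=\prod_{w\,\mathrm{even}}L_w(X;s+w/2)$ and $L_{\mathrm{odd}}(\perf_\dg(X);s)=\prod_{w\,\mathrm{odd}}L_w(X;s+(w-1)/2)$ (this is Proposition~\ref{prop:equalities}, proved via the comparison of noncommutative $l$-adic cohomology with Tate-twisted \'etale cohomology in Proposition~\ref{prop:etale}), deduce the forward implications directly, and obtain the converse by ruling out zero/pole cancellation inside the open critical strip using condition $(\mathrm{C}2)$ in characteristic zero and Grothendieck--Deligne in positive characteristic. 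This is precisely the structure of the paper's argument; the only thing you leave open, and correctly flag as the main technical burden, is the local Euler-factor comparison underlying the factorization, which the paper supplies in \S\ref{sec:NC-L}.
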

Intuitively speaking, Theorem \ref{thm:main} shows that the generalized Riemann hypothesis belongs not only to the realm of algebraic geometry but also to the broad setting of geometric noncommutative schemes.

\section{Applications to commutative geometry}\label{sec:applications-1}
Let $k$ be a global field; since the generalized Riemann hypothesis holds when $\mathrm{char}(k)>0$, we restrict ourselves to the case where $\mathrm{char}(k)=0$. 
In this section, making use of Theorem \ref{thm:main}, we prove that the generalized Riemann hypothesis is invariant under derived equivalences and homological projective duality.
\subsection*{Derived invariance}
Let $X$ and $Y$ be two smooth proper $k$-schemes. In what follows, we assume that the associated $L$-functions $\{L_w(X;s)\}_{0 \leq w \leq 2\mathrm{dim}(X)}$ and $\{L_w(Y;s)\}_{0 \leq w \leq 2\mathrm{dim}(Y)}$ satisfy condition $\mathrm{(C2)}$. 
\begin{corollary}[Derived invariance]\label{cor:invariance}
If the derived categories of perfect complexes $\mathrm{perf}(X)$ and $\mathrm{perf}(Y)$ are (Fourier-Mukai) equivalent, then we have the following equivalences:
\begin{eqnarray}\label{eq:searched}
\{\mathrm{R}_w(X)\}_{w\,\mathrm{even}} \Leftrightarrow \{\mathrm{R}_w(Y)\}_{w\,\mathrm{even}} && \{\mathrm{R}_w(X)\}_{w\,\mathrm{odd}} \Leftrightarrow \{\mathrm{R}_w(Y)\}_{w\,\mathrm{odd}}\,.
\end{eqnarray} 
\end{corollary}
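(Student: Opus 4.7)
My plan is to sandwich the (as yet unknown) equivalences in \eqref{eq:searched} between two applications of Theorem \ref{thm:main}, using the fact that the noncommutative $L$-functions depend only on the dg Morita equivalence class of $\cA$. The chain of equivalences I want to produce is
\begin{eqnarray*}
\{\mathrm{R}_w(X)\}_{w\,\mathrm{even}} &\Leftrightarrow& \mathrm{R}_{\mathrm{even}}(\perf_\dg(X)) \Leftrightarrow \mathrm{R}_{\mathrm{even}}(\perf_\dg(Y)) \Leftrightarrow \{\mathrm{R}_w(Y)\}_{w\,\mathrm{even}}\,,
\end{eqnarray*}
with the analogous chain in the odd case; the outer biconditionals come straight from Theorem \ref{thm:main} applied to $X$ and to $Y$, which is legal because we have assumed that the $L$-functions of both $X$ and $Y$ satisfy $(\mathrm{C}2)$ and that $\mathrm{char}(k)=0$.

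The substantive step is therefore the middle biconditional. First I would upgrade the given Fourier--Mukai equivalence $\mathrm{perf}(X)\simeq \mathrm{perf}(Y)$ to an equivalence of canonical dg enhancements $\perf_\dg(X)\simeq \perf_\dg(Y)$; this is standard since the canonical dg enhancement of $\perf(X)$ is unique up to quasi-equivalence (Lunts--Orlov) and any Fourier--Mukai kernel naturally induces a dg-functor between the enhancements. Next, I would invoke the construction of the noncommutative $L$-functions carried out in \S\ref{sec:NC-L}: they are built out of noncommutative $l$-adic cohomology, which is itself a dg Morita invariant of the geometric noncommutative $k$-scheme $\cA$. Consequently, for each place $\nu\in\Sigma_k$ the local Euler factors $L_{\mathrm{even},\nu}(-;s)$ and $L_{\mathrm{odd},\nu}(-;s)$ take the same values on $\perf_\dg(X)$ and on $\perf_\dg(Y)$; taking the infinite product over $\Sigma_k$ yields the identities
\begin{eqnarray*}
L_{\mathrm{even}}(\perf_\dg(X);s)=L_{\mathrm{even}}(\perf_\dg(Y);s), && L_{\mathrm{odd}}(\perf_\dg(X);s)=L_{\mathrm{odd}}(\perf_\dg(Y);s)\,.
\end{eqnarray*}
In particular the sets of zeros inside the respective critical strips coincide, so $\mathrm{R}_{\mathrm{even}}(\perf_\dg(X))\Leftrightarrow \mathrm{R}_{\mathrm{even}}(\perf_\dg(Y))$ and $\mathrm{R}_{\mathrm{odd}}(\perf_\dg(X))\Leftrightarrow \mathrm{R}_{\mathrm{odd}}(\perf_\dg(Y))$.

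The main obstacle is verifying that dg Morita invariance of noncommutative $l$-adic cohomology really does pass through to equality of the \emph{entire} $L$-functions rather than merely to some coarser invariant: one must check that the local factors at each non-archimedean place $\nu$, as produced in \S\ref{sec:NC-L}, are defined intrinsically from $\cA$ and not from a particular model. This should be immediate from the construction, but it is the one point where a careless reading of \S\ref{sec:NC-L} could leave a gap. Once that is confirmed, concatenating the three biconditionals above yields \eqref{eq:searched} and completes the proof.
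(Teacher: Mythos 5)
Your proof is correct and follows essentially the same route as the paper: pass from the Fourier--Mukai equivalence to a Morita equivalence of $\perf_\dg(X)$ and $\perf_\dg(Y)$, conclude the noncommutative conjectures are equivalent, and sandwich with Theorem \ref{thm:main} on both sides using the standing assumptions ($\mathrm{char}(k)=0$ and condition $(\mathrm{C}2)$). The only difference is that you make explicit the Morita invariance of the noncommutative $L$-functions, which the paper leaves implicit (it follows from Proposition \ref{prop:additive} and Definition \ref{def:L-functions}).
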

\begin{proof}
If the triangulated categories $\perf(X)$ and $\perf(Y)$ are (Fourier-Mukai) equivalent, then the dg categories $\perf_\dg(X)$ and $\perf_\dg(Y)$ are Morita equivalent. Consequently, we obtain the following equivalences: 
\begin{eqnarray*}
\mathrm{R}_{\mathrm{even}}(\perf_\dg(X)) \Leftrightarrow \mathrm{R}_{\mathrm{even}}(\perf_\dg(Y)) && \mathrm{R}_{\mathrm{odd}}(\perf_\dg(X)) \Leftrightarrow \mathrm{R}_{\mathrm{odd}}(\perf_\dg(Y))\,.
\end{eqnarray*}
By combining them with Theorem \ref{thm:main}, we hence obtain the above equivalences \eqref{eq:searched}.
\end{proof}
In the literature there are numerous examples of smooth proper $k$-schemes $X$ and $Y$ for which the above Corollary \ref{cor:invariance} applies; consult, for example, the book \cite{Huybrecht} and the references therein.
\subsection*{Homological Projective Duality}
Homological Projective Duality (=HPD) was introduced by Kuznetsov in \cite{KuznetsovHPD} as a tool to study the derived categories of perfect complexes of linear sections. Let $X$ be a smooth proper $k$-scheme equipped with a line bundle $\cL_X(1)$; we write $X\to \bbP(V)$ for the associated map, where $V:=H^0(X,\cL_X(1))^\vee$. Assume that we have a Lefschetz decomposition $\perf(X)=\langle \bbA_0, \bbA_1(1), \ldots, \bbA_{i-1}(i-1)\rangle$ with respect to $\cL_X(1)$ in the sense of \cite[Def.~4.1]{KuznetsovHPD}. Following \cite[Def.~6.1]{KuznetsovHPD}, let us write $Y$ for the HP-dual of $X$, $\cL_Y(1)$ for the HP-dual line bundle, and $Y \to \bbP(V^\vee)$ for the associated map. Given a generic linear subspace $L \subset V^\vee$, consider the smooth linear sections $X_L:=X\times_{\bbP(V)}\bbP(L^\perp)$ and $Y_L:=Y \times_{\bbP(V^\vee)}\bbP(L)$. In what follows, we assume that the associated $L$-functions $\{L_w(X_L;s)\}_{0 \leq w \leq 2\mathrm{dim}(X_L)}$ and $\{L_w(Y_L;s)\}_{0 \leq w \leq 2\mathrm{dim}(Y_L)}$ satisfy the above condition $\mathrm{(C2)}$.
\begin{theorem}[HPD-invariance]\label{thm:HPD}
Assume that the triangulated category $\bbA_0$ admits a full exceptional collection. Under this assumption, the following holds:
\begin{eqnarray}\label{eq:equivalences-main}
\mathrm{ERH}_k \Rightarrow \big( \{\mathrm{R}_w(X_L)\}_{w\, \mathrm{even}} \Leftrightarrow  \{\mathrm{R}_w(Y_L)\}_{w\, \mathrm{even}} \big)  && \{\mathrm{R}_w(X_L)\}_{w\, \mathrm{odd}} \Leftrightarrow  \{\mathrm{R}_w(Y_L)\}_{w\, \mathrm{odd}}\,.
\end{eqnarray} 
\end{theorem}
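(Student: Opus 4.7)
The plan is to move the comparison between $X_L$ and $Y_L$ into the noncommutative world, where homological projective duality forces $\perf_\dg(X_L)$ and $\perf_\dg(Y_L)$ to differ only by admissible subcategories of $\bbA_0$; the assumption that $\bbA_0$ carries a full exceptional collection then reduces the discrepancy to a product of copies of the Dedekind zeta function of $k$.

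First I would invoke Kuznetsov's main HPD theorem \cite{KuznetsovHPD}: for the generic linear subspace $L \subset V^\vee$, the categories $\perf(X_L)$ and $\perf(Y_L)$ admit semi-orthogonal decompositions
\begin{equation*}
\perf(X_L) = \langle \bbC_L, \bbA_{\ast}(\ast), \ldots\rangle, \qquad \perf(Y_L) = \langle \ldots, \bbA_{\ast}(\ast), \bbC_L\rangle,
\end{equation*}
sharing a common Kuznetsov component $\bbC_L$ (whose dg enhancement $\bbC_L^\dg$ coincides on both sides), and with all remaining summands being twists of the admissible subcategories $\bbA_j \subset \bbA_0$. These lift to dg semi-orthogonal decompositions of $\perf_\dg(X_L)$ and $\perf_\dg(Y_L)$.

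Next I would use additivity of the noncommutative $\ell$-adic cohomology of \S\ref{sec:NC-L} under semi-orthogonal decompositions, which turns the above decompositions into multiplicative identities for both $L_{\mathrm{even}}$ and $L_{\mathrm{odd}}$. Because $\bbA_0$ has a full exceptional collection, so does each admissible $\bbA_j$, and each $\bbA_j^\dg$ becomes Morita equivalent to a finite direct sum of copies of $k$ (viewed as a dg $k$-algebra). A direct computation of noncommutative $\ell$-adic cohomology for the trivial dg category then yields $L_{\mathrm{odd}}(k;s) = 1$ and $L_{\mathrm{even}}(k;s) = \zeta_k(s)$. Taking ratios gives
\begin{equation*}
\frac{L_{\mathrm{odd}}(\perf_\dg(X_L);s)}{L_{\mathrm{odd}}(\perf_\dg(Y_L);s)} = 1, \qquad \frac{L_{\mathrm{even}}(\perf_\dg(X_L);s)}{L_{\mathrm{even}}(\perf_\dg(Y_L);s)} = \zeta_k(s)^N,
\end{equation*}
for some integer $N$ recording the difference between the total lengths of the exceptional collections of the $\bbA_j$'s appearing on each side.

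Finally I would conclude as follows. The odd ratio being trivial gives $\mathrm{R}_{\mathrm{odd}}(\perf_\dg(X_L)) \Leftrightarrow \mathrm{R}_{\mathrm{odd}}(\perf_\dg(Y_L))$ without any extra hypothesis, and Theorem \ref{thm:main} (whose condition $(\mathrm{C}2)$ is granted by assumption) translates this to the unconditional odd equivalence in \eqref{eq:equivalences-main}. Under $\mathrm{ERH}_k$, the zeros of $\zeta_k(s)$ in $0 < \mathrm{Re}(s) < 1$ sit on $\mathrm{Re}(s)=\frac{1}{2}$, so the zero sets of $L_{\mathrm{even}}(\perf_\dg(X_L);s)$ and $L_{\mathrm{even}}(\perf_\dg(Y_L);s)$ in the critical strip differ only at points on the critical line, yielding the noncommutative even equivalence and, via Theorem \ref{thm:main} once more, the conditional even equivalence in \eqref{eq:equivalences-main}. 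The delicate part of this program will be pinning down the identification $L_{\mathrm{even}}(k;s)=\zeta_k(s)$ and verifying that the noncommutative local Euler factors at every place $\nu \in \Sigma_k$ are multiplicative on semi-orthogonal decompositions; I expect both to be built into the construction of \S\ref{sec:NC-L}, but they are the points where the argument is least formal.
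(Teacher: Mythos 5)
Your overall strategy matches the paper's: lift to dg enhancements, invoke Kuznetsov's HPD semi-orthogonal decompositions sharing the common component $\cC_L$, use multiplicativity of the noncommutative $L$-functions (Proposition~\ref{prop:product}) to peel off the Lefschetz pieces, identify their contribution as powers of $\zeta_k(s)$, and then push through Theorem~\ref{thm:main}. However, there is one genuine gap in the step where you dispose of the Lefschetz pieces. You assert that ``because $\bbA_0$ has a full exceptional collection, so does each admissible $\bbA_j$, and each $\bbA_j^\dg$ becomes Morita equivalent to a finite direct sum of copies of $k$.'' Neither clause is known to follow: an admissible subcategory of a triangulated category with a full exceptional collection need not itself admit a full exceptional collection (this is precisely the kind of statement the Jordan--H\"older and phantom-category phenomena caution against), and \emph{a fortiori} one cannot conclude Morita triviality of $\bbA_{j,\dg}$ or of the primitive pieces $\mathfrak{a}_{r,\dg}$.

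The paper circumvents this in Lemma~\ref{lem:computation} by working one level down, in the additive category $\Hmo_0(k)$ of noncommutative motives, rather than at the level of dg categories up to Morita equivalence. There, $U(\bbA_{0,\dg})\simeq U(k)^{\oplus n}$ has endomorphism ring $M_{n\times n}(\bbZ)$, so \emph{every} direct summand of $U(\bbA_{0,\dg})$ is forced to be isomorphic to $U(k)^{\oplus n_r}$ for some $n_r$; since $\mathfrak{a}_r$ is admissible in $\bbA_0$, the motive $U(\mathfrak{a}_{r,\dg})$ is such a summand, and $n_r$ is then pinned down as the rank of $K_0(\mathfrak{a}_r)$. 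Because $\mathrm{H}_{\mathrm{even},l_\nu}(-\otimes_k k_\nu)$ and $\mathrm{H}_{\mathrm{odd},l_\nu}(-\otimes_k k_\nu)$ are additive invariants and hence factor through $\Hmo_0(k)$ (diagram~\eqref{eq:factorization}), this weaker motivic decomposition is enough to compute $L_{\mathrm{even}}(\mathfrak{a}_{r,\dg};s)=\zeta_k(s)^{n_r}$ and $L_{\mathrm{odd}}(\mathfrak{a}_{r,\dg};s)=1$. You should replace the Morita-equivalence claim with this motivic argument; the rest of your outline, including the final use of $\mathrm{ERH}_k$ to control the $\zeta_k$ factors and the appeal to Theorem~\ref{thm:main}, is sound and follows the paper's proof.
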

\begin{remark}
The assumption of Theorem \ref{thm:HPD} is quite mild since it holds in all the examples in the literature.
\end{remark}
Roughly speaking, Theorem \ref{thm:HPD} ``cuts in half'' the difficulty of proving the generalized Riemann hypothesis, i.e., if the generalized Riemann hypothesis holds for a linear section, then it also holds for the HP-dual linear section. In the literature there are numerous examples of homological projective dualities for which the above Theorem \ref{thm:HPD} applies (e.g., Veronese-Clifford duality, Grassmannian-Pfaffian duality, Spinor duality, Determinantal duality, etc); consult, for example, the surveys \cite{KuznetsovICM,Thomas} and the references therein.
\section{Applications to noncommutative geometry}\label{sec:applications-2}
Let $k$ be a global field. In this section, making use of Theorem \ref{thm:main}, we prove the noncommutative generalized Riemann hypothesis in some new cases.

\subsection*{Noncommutative gluings of schemes} Let $X$ and $Y$ be two smooth proper $k$-schemes and $\mathrm{B}$ a perfect dg $\perf_\dg(X)\text{-}\perf_\dg(Y)$ bimodule. Following Orlov \cite[Def.~3.5]{Orlov1}, we can consider the gluing $X \odot_{\mathrm{B}} Y$ of the dg categories $\perf_\dg(X)$ and $\perf_\dg(Y)$ via the dg bimodule $\mathrm{B}$ (Orlov used a different notation). As proved by Orlov in \cite[Thm.~4.11]{Orlov1}, $X\odot Y$ is a geometric noncommutative $k$-scheme.
\begin{theorem}\label{thm:gluing}
We have the following implications:
\begin{eqnarray*}
\{\mathrm{R}_w(X)\}_{w\, \mathrm{even}} + \{\mathrm{R}_w(Y)\}_{w\, \mathrm{even}} \Rightarrow \mathrm{R}_{\mathrm{even}}(X\odot_{\mathrm{B}} Y) && \{\mathrm{R}_w(X)\}_{w\, \mathrm{odd}} + \{\mathrm{R}_w(Y)\}_{w\, \mathrm{odd}} \Rightarrow \mathrm{R}_{\mathrm{odd}}(X\odot_{\mathrm{B}} Y)\,.
\end{eqnarray*}
In particular, the conjectures $ \mathrm{R}_{\mathrm{even}}(X\odot_{\mathrm{B}} Y)$ and $ \mathrm{R}_{\mathrm{odd}}(X\odot_{\mathrm{B}} Y)$ hold when $\mathrm{char}(k)>0$.
\end{theorem}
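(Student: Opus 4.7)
The plan is to reduce the noncommutative Riemann hypothesis for the gluing to the one for $\perf_\dg(X)$ and $\perf_\dg(Y)$ separately, and then to apply Theorem~\ref{thm:main}. The essential ingredient is that the Orlov gluing $X\odot_\mathrm{B} Y$ comes equipped with a semi-orthogonal decomposition whose components are $\perf_\dg(X)$ and $\perf_\dg(Y)$. Since noncommutative $l$-adic cohomology (as constructed in \S\ref{sec:NC-L}) is built out of a localizing invariant, it is additive with respect to semi-orthogonal decompositions, and hence splits as a direct sum indexed by the two factors, compatibly with the Frobenius action at each non-archimedean place $\nu \in \Sigma_k$.

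First, I would spell out this additivity at each place $\nu$: the characteristic polynomials of Frobenius on the even (resp. odd) part of the noncommutative $l$-adic cohomology of $X\odot_\mathrm{B} Y$ are the products of the corresponding characteristic polynomials for $\perf_\dg(X)$ and $\perf_\dg(Y)$. Taking Euler products over $\Sigma_k$, this yields the factorizations
\begin{eqnarray*}
L_{\mathrm{even}}(X\odot_\mathrm{B} Y;s) = L_{\mathrm{even}}(\perf_\dg(X);s)\cdot L_{\mathrm{even}}(\perf_\dg(Y);s)\\
L_{\mathrm{odd}}(X\odot_\mathrm{B} Y;s) = L_{\mathrm{odd}}(\perf_\dg(X);s)\cdot L_{\mathrm{odd}}(\perf_\dg(Y);s)\,.
\end{eqnarray*}
In particular, the zero set of the left-hand side in the relevant critical strip is the union of the zero sets of the two factors on the right-hand side. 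Note that the bimodule $\mathrm{B}$ plays no role in this step because localizing invariants do not see the gluing bimodule beyond the underlying semi-orthogonal decomposition.

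Next, I would invoke the direct (forward) implications of Theorem~\ref{thm:main}: the hypothesis $\{\mathrm{R}_w(X)\}_{w\,\mathrm{even}}$ gives $\mathrm{R}_{\mathrm{even}}(\perf_\dg(X))$, and similarly for $Y$; analogously in the odd case. Combined with the factorization above, this forces every zero of $L_{\mathrm{even}}(X\odot_\mathrm{B} Y;s)$ in the strip $0<\mathrm{Re}(s)<1$ to lie on $\mathrm{Re}(s)=\frac{1}{2}$, and every zero of $L_{\mathrm{odd}}(X\odot_\mathrm{B} Y;s)$ in the strip $\frac{1}{2}<\mathrm{Re}(s)<\frac{3}{2}$ to lie on $\mathrm{Re}(s)=1$, which are precisely the conjectures $\mathrm{R}_{\mathrm{even}}(X\odot_\mathrm{B} Y)$ and $\mathrm{R}_{\mathrm{odd}}(X\odot_\mathrm{B} Y)$. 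The final assertion in the case $\mathrm{char}(k)>0$ then follows from Deligne's proof of the Weil conjectures (which establishes $\{\mathrm{R}_w(X)\}_w$ and $\{\mathrm{R}_w(Y)\}_w$ unconditionally in positive characteristic), exactly as in the discussion preceding Theorem~\ref{thm:main}.

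The one step that deserves care, and is the main obstacle, is justifying the additivity of noncommutative $l$-adic cohomology under Orlov gluings. This should be a formal consequence of the construction in \S\ref{sec:NC-L} once it is known that the relevant invariant factors through a localizing (or at least additive) invariant of dg categories, since Orlov's gluing $X\odot_\mathrm{B} Y$ sits in a short exact sequence of dg categories with quotients $\perf_\dg(X)$ and $\perf_\dg(Y)$. I expect this to be recorded in the noncommutative $l$-adic setup and to be the only nontrivial input beyond Theorem~\ref{thm:main}.
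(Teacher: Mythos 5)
Your proposal is correct and follows essentially the same route as the paper: both use the semi-orthogonal decomposition $H^0(X\odot_{\mathrm{B}}Y)=\langle H^0(\perf_\dg(X)), H^0(\perf_\dg(Y))\rangle$, deduce the factorization of the noncommutative $L$-functions (the paper cites Proposition~\ref{prop:product}, which is exactly the additivity statement you re-derive via Proposition~\ref{prop:additive}), and then invoke Theorem~\ref{thm:main}. The only cosmetic difference is that you speak of ``localizing'' invariance, whereas the paper only needs and uses additive invariance.
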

\subsection*{Calabi-Yau dg categories associated to hypersurfaces} Let $X \subset \bbP^n$ be a smooth hypersurface of degree $\mathrm{deg}(X) \leq n+1$. As proved by Kuznetsov in \cite[Cor.~4.1]{KuznetsovCY}, we have a semi-orthogonal decomposition $\perf(X) = \langle \cT, \cO_X, \ldots, \cO_X(n- \mathrm{deg}(X))\rangle$. Moreover, the full dg subcategory $\cT_\dg$ of $\perf_\dg(X)$, consisting of the objects of $\cT$, is a Calabi-Yau dg category of fractional CY-dimension $\frac{(n+1)(\mathrm{deg}(X)-2)}{\mathrm{deg}(X)}$. Note that $\cT_\dg$ is a geometric noncommutative $k$-scheme. Note also that $\cT_\dg$ is {\em not} Morita equivalent to a dg category of the form $\mathrm{perf}_\dg(Y)$, with $Y$ a smooth proper $k$-scheme, whenever its CY-dimension is {\em not} an integer.
\begin{theorem}\label{thm:CY}
We have the following implications:
\begin{eqnarray}\label{eq:implications1}
\{\mathrm{R}_w(X)\}_{w\, \mathrm{even}} \Rightarrow \mathrm{R}_{\mathrm{even}}(\cT_\dg) && \{\mathrm{R}_w(X)\}_{w\, \mathrm{odd}} \Rightarrow \mathrm{R}_{\mathrm{odd}}(\cT_\dg)\,.
\end{eqnarray}
In particular, the conjectures $\mathrm{R}_{\mathrm{even}}(\cT_\dg)$ and $ \mathrm{R}_{\mathrm{odd}}(\cT_\dg)$ hold when $\mathrm{char}(k)>0$.
\end{theorem}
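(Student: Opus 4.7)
The plan is to combine Kuznetsov's semi-orthogonal decomposition with the additivity of noncommutative $l$-adic cohomology. The decomposition $\perf(X) = \langle \cT, \cO_X, \ldots, \cO_X(n-\deg(X))\rangle$ lifts to the canonical dg enhancement and presents $\perf_\dg(X)$ as a semi-orthogonal gluing of $\cT_\dg$ with $n-\deg(X)+1$ copies of $\perf_\dg(\Spec k)$, one for each exceptional line bundle. Since noncommutative $l$-adic cohomology is an additive invariant, the local Frobenius actions split accordingly; assembling the resulting product of local Euler factors across $\Sigma_k$ should yield the multiplicativity identities
$$L_{\mathrm{even}}(\perf_\dg(X); s) = L_{\mathrm{even}}(\cT_\dg; s) \cdot L_{\mathrm{even}}(\perf_\dg(\Spec k); s)^{n-\deg(X)+1},$$
$$L_{\mathrm{odd}}(\perf_\dg(X); s) = L_{\mathrm{odd}}(\cT_\dg; s) \cdot L_{\mathrm{odd}}(\perf_\dg(\Spec k); s)^{n-\deg(X)+1}.$$
The noncommutative $l$-adic cohomology of $\Spec k$ is concentrated in even parity and agrees there with classical $H^0$, so $L_{\mathrm{even}}(\perf_\dg(\Spec k); s) = \zeta_k(s)$ and $L_{\mathrm{odd}}(\perf_\dg(\Spec k); s) = 1$.

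Given these formulas, the odd implication in \eqref{eq:implications1} drops out: the multiplicativity collapses to $L_{\mathrm{odd}}(\cT_\dg; s) = L_{\mathrm{odd}}(\perf_\dg(X); s)$, whence $\mathrm{R}_{\mathrm{odd}}(\perf_\dg(X)) \Leftrightarrow \mathrm{R}_{\mathrm{odd}}(\cT_\dg)$, and the forward direction of Theorem \ref{thm:main} supplies the conclusion. For the even implication, I would rewrite
$$L_{\mathrm{even}}(\cT_\dg; s) = L_{\mathrm{even}}(\perf_\dg(X); s) \cdot \zeta_k(s)^{-(n-\deg(X)+1)}$$
and observe that $\zeta_k(s)$ is holomorphic on the open critical strip $0 < \mathrm{Re}(s) < 1$. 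Consequently every zero of $L_{\mathrm{even}}(\cT_\dg; s)$ in this strip is a zero of $L_{\mathrm{even}}(\perf_\dg(X); s)$, which under $\{\mathrm{R}_w(X)\}_{w\, \mathrm{even}}$ and Theorem \ref{thm:main} lies on $\mathrm{Re}(s) = \frac{1}{2}$. The ``in particular'' clause then follows from Deligne's proof of $\{\mathrm{R}_w(X)\}_{w}$ when $\mathrm{char}(k) > 0$.

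The main obstacle is establishing the multiplicativity itself. Turning the abstract additivity of noncommutative $l$-adic cohomology under semi-orthogonal decompositions into a genuine product formula for the global $L$-functions requires checking, at each non-archimedean place $\nu$, that the splitting of the local Frobenius action along the decomposition is compatible with the local Euler factor recipe of $\S\ref{sec:NC-L}$. Once this place-by-place compatibility is in hand, the infinite product over $\Sigma_k$ gives the claimed identities and the rest of the argument is formal.
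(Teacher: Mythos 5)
Your proposal is correct and follows essentially the same route as the paper's proof. The single obstacle you flag at the end — turning the additivity of noncommutative $l$-adic cohomology under semi-orthogonal decompositions into a factorization of the global $L$-functions, verified place by place and then assembled over $\Sigma_k$ — is precisely what the paper has already packaged as Proposition~\ref{prop:product}: its proof uses Proposition~\ref{prop:additive} and Lemma~\ref{lem:extension1} to split $\mathrm{H}_{\pm,l_\nu}(\cA\otimes_k k_\nu)$ as a $\mathrm{Gal}(\overline{k_\nu}/k_\nu)$-module into the pieces coming from $\cB$ and $\cC$, whence the local Euler factor factorizes and the global product follows. An iterated application to $\perf(X)=\langle \cT, \cO_X,\ldots,\cO_X(n-\deg X)\rangle$, together with the identification of each $\langle \cO_X(j)\rangle_\dg$ with $\perf_\dg(\mathrm{Spec}\,k)$ (so that its even $L$-function is $\zeta_k(s)$ and its odd one is $1$), gives exactly your two displayed product formulas, and the rest of your argument — the odd case collapsing to an equality, and the even case transferring zeros in the open strip because $\zeta_k$ has no poles there (condition $(\mathrm{C}2)$) — is the argument the paper gives, just spelled out a little more explicitly on the even side.
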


\subsection*{Finite-dimensional algebras of finite global dimension} Let $A$ be a finite-dimensional $k$-algebra of finite global dimension. Examples include path algebras of finite quivers without oriented cycles and their admissible quotients. As proved by Orlov in \cite[Cor.~5.4]{Orlov1}, $A$ is a geometric~noncommutative~$k$-scheme. 
\begin{example}[Dynkin quivers]
Let $\Delta$ be a Dynkin quiver of type $A_n$, $D_n$, $E_6$, $E_7$ or $E_8$. Recall that its Coxeter number $h$ is equal to $n+1$, $2(n-1)$, $12$, $18$ or $30$. It is well-known that the quiver $k$-algebra $A:=k\Delta$ has fractional CY-dimension $\frac{h-2}{h}$. Consequently, in all these cases the geometric noncommutative $k$-scheme $A$ is {\em not} Morita equivalent to a dg category of the form $\mathrm{perf}_\dg(Y)$, with $Y$ a smooth proper $k$-scheme.
\end{example}
Consider the largest semi-simple quotient $A/J$ of $A$, where $J$ stands for the Jacobson radical. Thanks to Artin-Wedderburn's theorem, $A/J$ is Morita equivalent to the product $D_1 \times \cdots \times D_n$, where $V_1, \ldots, V_n$ stand for the simple (right) $A/J$-modules and $D_1:=\mathrm{End}_{A/J}(V_1), \ldots, D_n:=\mathrm{End}_{A/J}(V_n)$ for the associated division $k$-algebras. Let us denote by $k_1, \ldots, k_n$ the centers of the division $k$-algebras $D_1, \ldots, D_n$.
\begin{theorem}\label{thm:finite1}
Assume that the quotient $k$-algebra $A/J$ is separable (this holds, for example, when $k$ is perfect). Under this assumption, we have the implication $\sum_{i=1}^n \mathrm{R}_0(\mathrm{Spec}(k_i)) \Rightarrow \mathrm{R}_{\mathrm{even}}(A)$. In particular, the conjecture $\mathrm{R}_{\mathrm{even}}(A)$ holds when $\mathrm{char}(k)>0$.
\end{theorem}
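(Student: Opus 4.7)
The plan is to realize $\perf_\dg(A)$ as an iterated gluing of the division algebras $D_1,\ldots,D_n$, to use Theorem \ref{thm:gluing} in order to factor the noncommutative $L$-function, and then to reduce each factor to a Dedekind zeta function via Theorem \ref{thm:main} applied to $\mathrm{Spec}(k_i)$.

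First, I would make precise the structural decomposition underlying Orlov's \cite[Cor.~5.4]{Orlov1}: since $A/J$ is separable, the Wedderburn--Malcev theorem produces a subalgebra splitting $A/J\hookrightarrow A$, and the nilpotent filtration $A\supset J\supset J^2\supset\cdots\supset J^N=0$ then exhibits $A$, up to Morita equivalence, as an iterated gluing (in the sense of \cite[Def.~3.5]{Orlov1}) of the division algebras $D_1,\ldots,D_n$ along the perfect $D_i\text{-}D_j$ bimodules provided by the associated graded pieces $J^m/J^{m+1}$. An iterated application of the evident noncommutative analogue of Theorem \ref{thm:gluing} then yields a factorization
\begin{equation*}
L_{\mathrm{even}}(A;s)=\prod_{i=1}^n L_{\mathrm{even}}(D_i;s),
\end{equation*}
and reduces $\mathrm{R}_{\mathrm{even}}(A)$ to the conjunction of $\mathrm{R}_{\mathrm{even}}(D_i)$ for $i=1,\ldots,n$.

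Next, for each $i$, I would identify $L_{\mathrm{even}}(D_i;s)$ with the Dedekind zeta function $\zeta_{k_i}(s)$. Since $D_i$ is a central simple $k_i$-algebra, i.e.\ an Azumaya algebra over $\mathrm{Spec}(k_i)$, the noncommutative $l$-adic cohomology of $D_i$ (viewed as a $k$-algebra via the composite $k\hookrightarrow k_i\hookrightarrow D_i$) coincides with that of $k_i$; this is the $l$-adic refinement of the well-known fact that Azumaya algebras become indistinguishable from their centers in the category of noncommutative motives. Combined with Theorem \ref{thm:main} applied to the smooth proper $0$-dimensional $k$-scheme $\mathrm{Spec}(k_i)$ (for which only the weight $w=0$ contributes, and condition $(\mathrm{C}2)$ is automatic since $\zeta_{k_i}(s)$ has its unique pole at $s=1$), this gives
\begin{equation*}
L_{\mathrm{even}}(D_i;s)=L_{\mathrm{even}}(\perf_\dg(\mathrm{Spec}(k_i));s)=L_0(\mathrm{Spec}(k_i);s)=\zeta_{k_i}(s),
\end{equation*}
and hence $\mathrm{R}_{\mathrm{even}}(D_i)\Leftrightarrow \mathrm{R}_0(\mathrm{Spec}(k_i))$. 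The concluding assertion in positive characteristic is then immediate from Weil's theorem \cite{Weil1} for zeta functions of global function fields.

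The main obstacle I expect is the second step, namely verifying Azumaya-invariance at the place-by-place level needed for $L$-functions: one has not merely to identify the global noncommutative motives of $D_i$ and $k_i$, but to match, for every non-archimedean place $\nu\in\Sigma_k$, the local Frobenius action that determines the Euler factor $L_{\mathrm{even},\nu}(D_i;s)$. This should follow from the compatibility of the motivic equivalence $U(D_i)\simeq U(k_i)$ with \'etale base change to the residue fields of $k_i$ lying over $\nu$, but it is precisely here that the careful bookkeeping lives.
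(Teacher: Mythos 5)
Your second step is essentially sound, and the ``place-by-place'' Azumaya-invariance you flag as the main obstacle is exactly where the paper's key observation lives: the functors $\mathrm{H}_{\mathrm{even},l_\nu}(-\otimes_k k_\nu)$ and $\mathrm{H}_{\mathrm{odd},l_\nu}(-\otimes_k k_\nu)$ are additive invariants with values in the $\bbQ$-linear category $\mathrm{Gal}(\overline{k_\nu}/k_\nu)\text{-}\mathrm{Mod}$, hence factor through the $\bbQ$-linearization $\mathrm{Hmo}_0(k)_\bbQ$, where the Azumaya identification $U(D_i)_\bbQ\simeq U(k_i)_\bbQ$ holds. That takes care of matching Euler factors $\nu$ by $\nu$.

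The genuine gap is in your first step. An iterated Orlov gluing of $D_1,\ldots,D_n$ is, up to Morita equivalence, an upper-triangular ``directed'' algebra whose basic quiver has no oriented cycles, and whose derived category carries a semi-orthogonal decomposition into the $\perf(D_i)$. It is \emph{not} true that every finite-dimensional $A$ of finite global dimension with $A/J$ separable is derived equivalent to such a gluing. Take $k=\bbQ$ and $A=kQ/(ab)$ where $Q$ has vertices $1,2$ and arrows $a\colon 1\to 2$, $b\colon 2\to 1$; a direct computation gives $\mathrm{gldim}(A)=2$, $K_0(A)\simeq\bbZ^2$, and $\dim_k Z(A)=2$ (the center is spanned by $1$ and the cyclic path $ba$). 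Since the center is a derived invariant of finite-dimensional algebras and any \emph{connected} directed algebra has one-dimensional center, while $\perf(A)$ is indecomposable, $A$ cannot be derived equivalent to an iterated gluing of copies of $k=D_1=D_2$; in particular $\perf(A)$ admits no semi-orthogonal decomposition $\langle\perf(D_1),\perf(D_2)\rangle$, so neither Theorem~\ref{thm:gluing} nor Proposition~\ref{prop:product} is available. The radical filtration $A\supset J\supset\cdots$ you invoke produces a filtration by two-sided ideals, not a semi-orthogonal decomposition; it records arrows in both directions and hence does not present $A$ as a gluing. The paper sidesteps this entirely: it invokes \cite[Thm.~3.5]{Azumaya} for the isomorphism $U(A)_\bbQ\simeq U(k_1)_\bbQ\oplus\cdots\oplus U(k_n)_\bbQ$ in $\mathrm{Hmo}_0(k)_\bbQ$, a purely motivic statement that does \emph{not} require a decomposition of the triangulated category $\perf(A)$, and then pushes this through the $\bbQ$-linear additive invariants above to get $L_{\mathrm{even}}(A;s)=\prod_{i=1}^n L_0(\mathrm{Spec}(k_i);s)$ and $L_{\mathrm{odd}}(A;s)=1$ directly. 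That weakening from a categorical to a motivic decomposition is essential, not cosmetic.
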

\begin{remark}[Artin $L$-functions]
Since $A/J$ is separable, the finite field extension $k_i/k$, with $1\leq i \leq n$, is also separable. Therefore, under the classical Galois-Grothendieck correspondence, the $k$-scheme $\mathrm{Spec}(k_i)$ corresponds to the finite set $\mathrm{Spec}(k_i)(\overline{k})$ equipped with the continuous action of the absolute Galois group $\mathrm{Gal}(\overline{k}/k)$. Consequently, the $L$-function $L_0(\mathrm{Spec}(k_i);s)$ (used in conjecture $\mathrm{R}_0(\mathrm{Spec}(k_i))$) reduces to the classical Artin $L$-function $L(\rho_i;s)$ associated to the $\bbC$-linear representation $\rho_i\colon \mathrm{Gal}(\overline{k}/k) \to \mathrm{GL}(\bbC^{\mathrm{Spec}(k_i)(\overline{k})})$.
\end{remark}
\subsection*{Finite-dimensional dg algebras} Let $A$ be a smooth {\em finite-dimensional} dg $k$-algebra in the sense of Orlov \cite{Orlov2}. As proved in \cite[Cor.~3.4]{Orlov2}, $A$ is a geometric noncommutative $k$-scheme. Following \cite[Def.~2.3]{Orlov2}, consider the quotient $A/J_+$, where $J_+$ stands for the external dg Jacobson radical of $A$.
\begin{theorem}\label{thm:finite2}
Assume that the quotient dg $k$-algebra $A/J_+$ is separable in the sense of \cite[Def.~2.11]{Orlov2} (this holds when $k$ is perfect) and that $\mathrm{char}(k)>0$. Under these assumptions, the conjecture $\mathrm{R}_{\mathrm{even}}(A)$ holds.
\end{theorem}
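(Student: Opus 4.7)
The plan is to adapt the strategy of Theorem \ref{thm:finite1} to the dg setting, combining Orlov's structural results on smooth finite-dimensional dg $k$-algebras \cite{Orlov2} with Deligne's proof of the Riemann hypothesis in positive characteristic.

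First, I would invoke Orlov's work to obtain a semi-orthogonal decomposition of $\perf_\dg(A)$ whose components are controlled by the simple factors of the quotient dg algebra $A/J_+$, using the mechanism underlying \cite[Cor.~3.4]{Orlov2}. The separability of $A/J_+$ in the sense of \cite[Def.~2.11]{Orlov2} then yields a dg Artin--Wedderburn-type decomposition: $A/J_+$ is Morita equivalent to a finite product $B_1 \times \cdots \times B_n$ of simple dg $k$-algebras, each $B_i$ having center a finite separable field extension $k_i/k$.

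Next, I would exploit the additivity of noncommutative $l$-adic cohomology along semi-orthogonal decompositions (established in \S\ref{sec:NC-L}) to factor the noncommutative $L$-function as $L_{\mathrm{even}}(A;s) = \prod_{i=1}^n L_{\mathrm{even}}(B_i;s)$. For each simple factor $B_i$, and arguing as in the proof of Theorem \ref{thm:finite1}, I would identify $L_{\mathrm{even}}(B_i;s)$ with the classical $L$-function $L_0(\mathrm{Spec}(k_i);s)$ of the $0$-dimensional $k$-scheme $\mathrm{Spec}(k_i)$; this is the point where the separability hypothesis is crucially used, to guarantee that the center behaves as an honest finite separable field extension. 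The problem is thereby reduced to the classical conjectures $\mathrm{R}_0(\mathrm{Spec}(k_i))$ for $1 \leq i \leq n$. Since $\mathrm{char}(k)>0$, each such conjecture is a theorem of Deligne \cite{Deligne,Weil1}, so invoking Theorem \ref{thm:main} one concludes $\mathrm{R}_{\mathrm{even}}(A)$.

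The main obstacle will lie in the identification step above: showing that the noncommutative $l$-adic cohomology of a separable simple dg algebra $B_i$ agrees, in its contribution to $L_{\mathrm{even}}$, with the classical $l$-adic cohomology of $\mathrm{Spec}(k_i)$. Unlike in Theorem \ref{thm:finite1}, the dg structure of $B_i$ may a priori produce contributions from higher-degree homology; a careful analysis of Orlov's definition of separability for dg algebras will be needed to control these contributions and to confirm that the noncommutative $L$-function of $B_i$ factors through that of $\mathrm{Spec}(k_i)$, thus allowing Deligne's theorem to take over.
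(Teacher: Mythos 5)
The overall strategy of reducing to a positive-characteristic Riemann hypothesis is sound, and the final ingredient (Weil's theorem in characteristic $p$) is the one the paper also uses, but there is a genuine gap in the middle of your argument.

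Your first step assumes a semi-orthogonal decomposition of $\perf_\dg(A)$ itself into pieces indexed by the simple factors of $A/J_+$. No such decomposition exists in general, and Orlov's result does not provide one: what \cite[Cor.~3.4]{Orlov2} supplies is a smooth proper $k$-scheme $X$ together with an admissible embedding $\perf(A)\hookrightarrow\perf(X)$, and it is the ambient $\perf(X)$ (not $\perf(A)$) which decomposes as $\langle \perf(D_1),\ldots,\perf(D_n)\rangle$ for separable division $k$-algebras $D_i$ with centers $k_i/k$. The consequence at the motivic level is therefore weaker than what you claim: by \cite[Thm.~2.11]{Azumaya}, $U(A)_\bbQ$ is only a \emph{direct summand} of $\bigoplus_{i=1}^n U(k_i)_\bbQ$ in $\Hmo_0(k)_\bbQ$, not isomorphic to it (contrast with Theorem \ref{thm:finite1}, where \cite[Thm.~3.5]{Azumaya} does give an isomorphism). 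As a result, the proposed factorization $L_{\mathrm{even}}(A;s)=\prod_i L_0(\mathrm{Spec}(k_i);s)$ is not available, and the reduction to $\sum_i \mathrm{R}_0(\mathrm{Spec}(k_i))$ fails at this step.

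The obstacle you flag at the end (possible higher-degree contributions from the dg structure of the simple factors $B_i$) is a red herring; additivity of noncommutative $l$-adic cohomology already controls those. The missing idea is instead to pass to Artin motives: one identifies $U(A)_\bbQ$ with an Artin motive $\rho\colon\mathrm{Gal}(\overline{k}/k)\to\mathrm{GL}(V)$ (a direct summand of the permutation representation attached to $\bigsqcup_i \mathrm{Spec}(k_i)$), whence $L_{\mathrm{even}}(A;s)$ is the Artin $L$-function $L(\rho;s)$. Since $\mathrm{char}(k)>0$, the Riemann hypothesis for Artin $L$-functions is Weil's theorem, which concludes the proof. This Artin-motive step is precisely what replaces the product formula that your plan relies on.
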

\begin{remark}
In the above Theorems \ref{thm:finite1} and \ref{thm:finite2}, the conjecture $\mathrm{R}_{\mathrm{odd}}(A)$ also holds; consult \S\ref{sec:proof} below.
\end{remark}

\section{Preliminaries}\label{sub:Orlov}
Let $k$ be a field. Throughout the note, we will assume some basic familiarity with the language of dg categories (consult Keller's survey \cite{Keller}) and will write $\dgcat(k)$ for the category of (small) dg categories.
\subsection{Geometric noncommutative schemes}\label{sub:geometric}
Following Orlov \cite[Def.~4.3]{Orlov1}, a dg category $\cA$ is called a {\em geometric noncommutative $k$-scheme} if there exists a smooth proper $k$-scheme $X$ and an admissible triangulated subcategory $\mathfrak{A}$ of $\perf(X)$ such that $\cA$ and the full dg subcategory $\mathfrak{A}_\dg$ of $\perf_\dg(X)$, consisting of the objects of $\mathfrak{A}$, are Morita equivalent. Every geometric noncommutative $k$-scheme $\cA$ is, in particular, a smooth proper dg category in the sense of Kontsevich\footnote{Orlov asked in \cite[Question~4.4]{Orlov1} if there exist smooth proper dg categories which are {\em not} geometric noncommutative schemes. To the best of the author's knowledge, this question remains wide open.} \cite{ENS}.
\begin{lemma}\label{lem:extension}
Let $k'/k$ be a field extension. If the dg category $\cA$ is a geometric noncommutative $k$-scheme, then the dg category $\cA\otimes_k k'$ is a geometric noncommutative $k'$-scheme.
\end{lemma}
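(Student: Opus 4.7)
The plan is to reduce the question, via the definition recalled above, to the stability of semi-orthogonal decompositions of smooth proper schemes under base change. First, by hypothesis there exists a smooth proper $k$-scheme $X$ together with an admissible triangulated subcategory $\mathfrak{A} \subseteq \perf(X)$ such that $\cA$ is Morita equivalent to $\mathfrak{A}_\dg$. Since base change along $k \to k'$ preserves Morita equivalences of dg categories, it suffices to produce a smooth proper $k'$-scheme $X'$ and an admissible subcategory $\mathfrak{A}' \subseteq \perf(X')$ such that $\mathfrak{A}_\dg \otimes_k k'$ is Morita equivalent to $\mathfrak{A}'_\dg$.

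The natural candidate is $X' := X \times_k k'$, which is smooth and proper over $k'$ since smoothness and properness are stable under arbitrary base change. Next, I would invoke the classical Morita equivalence
\[
\perf_\dg(X) \otimes_k k' \;\simeq\; \perf_\dg(X \times_k k'),
\]
which follows from the fact that $X$ is smooth and proper over $k$ (so that $\perf_\dg(X)$ is a smooth proper dg category and tensoring with $k'$ commutes with the passage to perfect complexes on the base change).

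By admissibility, the inclusion $\mathfrak{A} \hookrightarrow \perf(X)$ fits into a semi-orthogonal decomposition $\perf(X) = \langle \mathfrak{A}, \mathfrak{B} \rangle$ for some admissible $\mathfrak{B}$; equivalently, at the dg level, $\perf_\dg(X)$ is Morita equivalent to an upper triangular gluing of $\mathfrak{A}_\dg$ and $\mathfrak{B}_\dg$ along a perfect bimodule, and the projection onto $\mathfrak{A}_\dg$ is realized by an idempotent endofunctor. Tensoring this entire picture with $k'$ yields a semi-orthogonal decomposition
\[
\perf_\dg(X \times_k k') \;\simeq\; \langle \mathfrak{A}_\dg \otimes_k k',\; \mathfrak{B}_\dg \otimes_k k' \rangle,
\]
from which one extracts an admissible subcategory $\mathfrak{A}' \subseteq \perf(X \times_k k')$ with $\mathfrak{A}'_\dg$ Morita equivalent to $\mathfrak{A}_\dg \otimes_k k'$, and hence to $\cA \otimes_k k'$. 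This completes the argument.

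The main obstacle I anticipate is the compatibility $\perf_\dg(X) \otimes_k k' \simeq \perf_\dg(X_{k'})$ together with the fact that the semi-orthogonal decomposition (and hence admissibility) is preserved on the nose under this identification; both points ultimately rely on the smooth-and-proper hypothesis on $X/k$, ensuring that the relevant projection functors are given by perfect bimodules that remain perfect after base change.
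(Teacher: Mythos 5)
Your proposal follows essentially the same route as the paper's proof: pass to $X' = X \times_k k'$, use the Morita equivalence $\perf_\dg(X)\otimes_k k' \simeq \perf_\dg(X\times_k k')$, and transport the admissible subcategory $\mathfrak{A}$ to an admissible $\mathfrak{A}' \subseteq \perf(X\times_k k')$. The only difference is that you spell out why $\mathfrak{A}'$ remains admissible (via base change of the semi-orthogonal decomposition and perfectness of the projection bimodule), whereas the paper simply asserts this fact; your elaboration is correct and fills in the one place the paper is terse.
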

\begin{proof}
By definition, there exists a smooth proper $k$-scheme $X$ and an admissible triangulated subcategory $\mathfrak{A}$ of $\perf(X)$ such that $\cA$ and $\mathfrak{A}_\dg$ are Morita equivalent. Consequently, we obtain an induced Morita equivalence between $\cA\otimes_k k'$ and $\mathfrak{A}_\dg \otimes_k k'$. Consider the following Morita equivalence:
\begin{eqnarray}\label{eq:canonical1}
\perf_\dg(X) \otimes_k k' \too \perf_\dg(X\times_k k') && \cF \mapsto \cF\times_k k'\,.
\end{eqnarray}
Let us denote by $\mathfrak{A}'$ the smallest full triangulated subcategory of $\perf(X\times_k k')$ containing the objects $\cF \times_k k'$, with $\cF \in \mathfrak{A}$, and by $\mathfrak{A}'_\dg$ the associated full dg subcategory of $\perf_\dg(X\times_kk')$. By construction, the above Morita equivalence \eqref{eq:canonical1} restricts to a Morita equivalence $\mathfrak{A}_\dg \otimes_k k' \to \mathfrak{A}'_\dg$. Therefore, the proof follows now from the fact that $\mathfrak{A}'$ is an admissible triangulated subcategory of $\perf(X\times_k k')$. 
\end{proof}
\subsection{Additive invariants}\label{sub:additive}
Recall from \cite[\S2.1]{book} that a functor $E\colon \dgcat(k) \to \mathrm{D}$, with values in an additive category, is called an {\em additive invariant} if it satisfies the following two conditions:
\begin{itemize}
\item[(i)] It sends Morita equivalences to isomorphisms.
\item[(ii)] Let $\cB, \cC \subseteq \cA$ be dg categories inducing a semi-orthogonal decompositions $H^0(\cA)=\langle H^0(\cB), H^0(\cC)\rangle$ in the sense of Bondal-Orlov \cite[Def.~2.4]{BO}. Under these notations, the inclusions $\cB \subseteq \cA$ and $\cC\subseteq \cA$ induce an isomorphism $E(\cB) \oplus E(\cC) \to E(\cA)$.
\end{itemize}
\begin{lemma}\label{lem:extension1}
Let $k'/k$ be a field extension. Given an additive invariant $E\colon \dgcat(k') \to \mathrm{D}$, the composed functor $E(-\otimes_k k')\colon \dgcat(k) \to \mathrm{D}$ is also an additive invariant.
\end{lemma}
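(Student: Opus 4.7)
The plan is to verify the two defining conditions of an additive invariant directly for the composed functor $E(-\otimes_k k')$, leveraging that $E$ itself satisfies them and that base change $-\otimes_k k' \colon \dgcat(k) \to \dgcat(k')$ is a symmetric monoidal operation that behaves well with Morita equivalences and semi-orthogonal decompositions.

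For condition (i), the key input is that base change preserves Morita equivalences: a Morita equivalence $\cA \to \cA'$ in $\dgcat(k)$ extends to a Morita equivalence $\cA \otimes_k k' \to \cA' \otimes_k k'$ in $\dgcat(k')$, because the induced functor on derived categories is an equivalence (the derived category of a base-changed dg category is obtained from the original derived category by extension of scalars). Post-composing with $E$ then produces an isomorphism, so $E(-\otimes_k k')$ sends Morita equivalences to isomorphisms.

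For condition (ii), the central claim to establish is that if $\cB, \cC \subseteq \cA$ induce a semi-orthogonal decomposition $H^0(\cA) = \langle H^0(\cB), H^0(\cC)\rangle$, then the base-changed dg inclusions $\cB \otimes_k k', \cC \otimes_k k' \subseteq \cA \otimes_k k'$ induce a semi-orthogonal decomposition $H^0(\cA \otimes_k k') = \langle H^0(\cB \otimes_k k'), H^0(\cC \otimes_k k')\rangle$. Semi-orthogonality is immediate from the identification $\Hom_{\cA \otimes_k k'}(C, B) \simeq \Hom_\cA(C, B) \otimes_k k'$ for $B \in \cB$ and $C \in \cC$. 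For the generation condition, note that $\cA \otimes_k k'$ has the same set of objects as $\cA$, so every object of $H^0(\cA \otimes_k k')$ comes from $H^0(\cA)$, and its SOD triangle in $H^0(\cA)$ base-changes to a triangle in $H^0(\cA \otimes_k k')$ with outer terms in $H^0(\cB \otimes_k k')$ and $H^0(\cC \otimes_k k')$. Applying the additive invariant $E$ to this base-changed decomposition then yields the required isomorphism $E(\cB \otimes_k k') \oplus E(\cC \otimes_k k') \isotoo E(\cA \otimes_k k')$.

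The main (mild) obstacle is the stability of the semi-orthogonal decomposition under $-\otimes_k k'$; in general, verifying that categorical structures of this kind are preserved by base change can be delicate, but here both the semi-orthogonality relations and the SOD triangles are witnessed at the dg level by the inclusions $\cB, \cC \subseteq \cA$, so base change preserves them essentially strictly. With both conditions established, $E(-\otimes_k k')$ qualifies as an additive invariant.
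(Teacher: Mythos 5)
Your argument for condition~(i) agrees with the paper's: both rely on the fact that $-\otimes_k k'$ preserves Morita equivalences (the paper simply cites \cite[Prop.~7.1]{Artin} for this).

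For condition~(ii), however, there is a real gap. To invoke condition~(ii) for $E$ you need the base-changed inclusions $\cB\otimes_k k', \cC\otimes_k k' \subseteq \cA\otimes_k k'$ to induce a semi-orthogonal decomposition $H^0(\cA\otimes_k k') = \langle H^0(\cB\otimes_k k'), H^0(\cC\otimes_k k')\rangle$ in the Bondal--Orlov sense, which presupposes that $H^0(\cA\otimes_k k')$ is a \emph{triangulated} category, i.e.\ that $\cA\otimes_k k'$ is pretriangulated. This is not automatic even when $\cA$ is pretriangulated over $k$: base change creates new closed degree-zero morphisms that exist only over $k'$, while $\cA\otimes_k k'$ has the same objects as $\cA$, so there is no reason for $\cA\otimes_k k'$ to contain cones of these new morphisms. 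Your closing paragraph flags exactly this as a ``mild obstacle'' but then asserts that base change preserves the decomposition ``essentially strictly,'' which is precisely the step that requires justification; the phrase ``base-changes to a triangle in $H^0(\cA\otimes_k k')$'' already assumes the triangulated structure whose existence is at issue.

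The paper's proof resolves this by passing to the Bondal--Kapranov pretriangulated envelopes: one checks that $\mathrm{pre}(\cB\otimes_k k'), \mathrm{pre}(\cC\otimes_k k') \subseteq \mathrm{pre}(\cA\otimes_k k')$ do induce a semi-orthogonal decomposition of $H^0(\mathrm{pre}(\cA\otimes_k k'))$, applies condition~(ii) for $E$ to these envelopes, and then uses the fact that the canonical dg functors $\cA\otimes_k k' \to \mathrm{pre}(\cA\otimes_k k')$, $\cB\otimes_k k' \to \mathrm{pre}(\cB\otimes_k k')$, $\cC\otimes_k k' \to \mathrm{pre}(\cC\otimes_k k')$ are Morita equivalences, together with condition~(i) for $E$, to transport the resulting isomorphism back to $E(\cB\otimes_k k') \oplus E(\cC\otimes_k k') \isotoo E(\cA\otimes_k k')$. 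Inserting this pretriangulated-envelope step would close the gap in your argument.
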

\begin{proof}
Condition (i) follows from the fact that the functor $-\otimes_k k'$ preserves Morita equivalences; consult \cite[Prop.~7.1]{Artin}. Concerning condition (ii), let $\cB, \cC \subseteq \cA$ be dg categories inducing a semi-orthogonal decomposition $H^0(\cA)=\langle H^0(\cB), H^0(\cC)\rangle$. The associated dg categories $\mathrm{pre}(\cB\otimes_k k'), \mathrm{pre}(\cC\otimes_k k') \subseteq \mathrm{pre}(\cA\otimes_k k')$, where $\mathrm{pre}(-)$ stands for Bondal-Kapranov's pretriangulated envelope \cite{BK}, also induce a semi-orthogonal decomposition $H^0(\mathrm{pre}(\cA\otimes_k k'))= \langle \mathrm{pre}(\cB\otimes_k k'), \mathrm{pre}(\cC\otimes_k k')\rangle$. Therefore, since the canonical dg functors $\cA\otimes_k k' \to \mathrm{pre}(\cA\otimes_k k')$, $\cB\otimes_k k' \to \mathrm{pre}(\cB\otimes_k k')$, and $\cC\otimes_k k' \to \mathrm{pre}(\cC\otimes_k k')$, are Morita equivalences, the proof of condition (ii) follows now from the fact that the functor $E$ satisfies condition (ii).
\end{proof}
Consult \cite[\S2.3]{book} for the construction of the universal additive invariant $U\colon \dgcat(k) \to \mathrm{Hmo}_0(k)$. Given any additive invariant $E$, there exists a unique $\bbZ$-linear functor $\overline{E}$ making the following diagram commute:
\begin{eqnarray}\label{eq:factorization}
\xymatrix{
\dgcat(k) \ar[d]_-U \ar[rr]^-{E} && \mathrm{D} \\
\mathrm{Hmo}_0(k) \ar@/_/[urr]_-{\overline{E}} &&\,.
}
\end{eqnarray}
\section{Noncommutative $l$-adic cohomology}
Let $k$ be a field. Given a prime number $l\neq \mathrm{char}(k)$, recall from \cite[\S2.5]{Atiyah} the construction of the $l$-adic \'etale $K$-theory functor with values in the (homotopy) category of spectra:
\begin{eqnarray}\label{eq:functor-l}
K^{\mathrm{et}}(-)_{\widehat{l}}\colon \dgcat(k) \too \mathrm{Spt} && \cA \mapsto \mathrm{holim}_{n\geq 0}\, K^{\mathrm{et}}(\cA;\bbZ/l^n)\,.
\end{eqnarray}
By construction, the homotopy groups $\pi_\ast(K^{\mathrm{et}}(\cA)_{\widehat{l}})$ are modules over the ring of $l$-adic integers $\bbZ_l$.
\begin{definition}[Noncommutative $l$-adic cohomology]
Given a dg category $\cA$, its noncommutative $l$-adic cohomology is defined as follows ($\overline{k}$ stands for a fixed separable closure of $k$):
\begin{eqnarray}\label{eq:NC-homology}
\mathrm{H}_{\mathrm{even}, l}(\cA):= \pi_0(K^{\mathrm{et}}(\cA\otimes_k \overline{k})_{\widehat{l}})\otimes_\bbZ \bbZ[1/l] && \mathrm{H}_{\mathrm{odd},l}(\cA):= \pi_1(K^{\mathrm{et}}(\cA\otimes_k \overline{k})_{\widehat{l}})\otimes_\bbZ \bbZ[1/l] \,.
\end{eqnarray}
\end{definition}
Note that, by construction, the noncommutative $l$-adic cohomology groups \eqref{eq:NC-homology} are $\bbQ_l$-vector spaces. Moreover, they are equipped with a continuous action of the absolute Galois group $\mathrm{Gal}(\overline{k}/k)$. Consequently, we obtain the following well-defined functors with values in the category of $\bbQ_l$-linear $\mathrm{Gal}(\overline{k}/k)$-modules:
\begin{equation}\label{eq:l-adic}
\mathrm{H}_{\mathrm{even}, l}(-), \mathrm{H}_{\mathrm{odd}, l}(-)\colon \dgcat(k) \too \mathrm{Gal}(\overline{k}/k)\text{-}\mathrm{Mod}\,.
\end{equation}
\begin{proposition}\label{prop:additive}
The functors \eqref{eq:l-adic} are additive invariants.
\end{proposition}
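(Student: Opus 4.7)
The plan is to factor the functors in \eqref{eq:l-adic} as the composition
\[
\dgcat(k) \xrightarrow{\;-\otimes_k \overline{k}\;} \dgcat(\overline{k}) \xrightarrow{\;K^{\mathrm{et}}(-)_{\widehat{l}}\;} \mathrm{Spt} \xrightarrow{\;\pi_i(-)\otimes_\bbZ \bbZ[1/l]\;} \bbQ_l\text{-}\mathrm{Mod}
\]
(for $i=0,1$), remembering the natural $\mathrm{Gal}(\overline{k}/k)$-action inherited from base change. By Lemma \ref{lem:extension1}, it suffices to verify that the composition starting from $\dgcat(\overline{k})$ is an additive invariant. Since $\pi_i\colon \mathrm{Spt}\to \mathrm{Ab}$ is additive and $-\otimes_\bbZ\bbZ[1/l]$ is exact, the problem reduces further to showing that $K^{\mathrm{et}}(-)_{\widehat{l}}\colon \dgcat(\overline{k})\to\mathrm{Spt}$ itself satisfies conditions (i) and (ii).

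For condition (i), the spectrum-valued functor $K^{\mathrm{et}}(-;\bbZ/l^n)$ sends Morita equivalences to weak equivalences of spectra (a standard property of $K$-theory with coefficients, as recalled in \cite{Atiyah}), and the homotopy limit preserves weak equivalences of diagrams of fibrant spectra; hence $K^{\mathrm{et}}(-)_{\widehat{l}}=\mathrm{holim}_n K^{\mathrm{et}}(-;\bbZ/l^n)$ inherits Morita invariance. For condition (ii), given a semi-orthogonal decomposition $H^0(\cA)=\langle H^0(\cB),H^0(\cC)\rangle$, the fact that $K^{\mathrm{et}}(-;\bbZ/l^n)$ is a localizing (hence additive) invariant produces, for each $n$, a weak equivalence $K^{\mathrm{et}}(\cA;\bbZ/l^n)\simeq K^{\mathrm{et}}(\cB;\bbZ/l^n)\vee K^{\mathrm{et}}(\cC;\bbZ/l^n)$. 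Because $\mathrm{Spt}$ is stable, finite coproducts coincide with finite products and therefore commute with $\mathrm{holim}_n$; taking homotopy limits then yields a splitting $K^{\mathrm{et}}(\cA)_{\widehat{l}}\simeq K^{\mathrm{et}}(\cB)_{\widehat{l}}\vee K^{\mathrm{et}}(\cC)_{\widehat{l}}$. Applying $\pi_i$ (which preserves finite wedges) and tensoring with $\bbZ[1/l]$ (which is flat) produces the required isomorphism $\mathrm{H}_{i,l}(\cA)\cong \mathrm{H}_{i,l}(\cB)\oplus \mathrm{H}_{i,l}(\cC)$ of $\bbQ_l$-vector spaces.

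It remains to argue that every step above is $\mathrm{Gal}(\overline{k}/k)$-equivariant and that the induced action on $\mathrm{H}_{i,l}$ is continuous, so that the splitting is an isomorphism in $\mathrm{Gal}(\overline{k}/k)\text{-}\mathrm{Mod}$. Equivariance is automatic from the naturality of the construction: the inclusions $\cB,\cC\subseteq\cA$ base-change to Galois-equivariant dg functors $\cB\otimes_k\overline{k},\cC\otimes_k\overline{k}\subseteq\cA\otimes_k\overline{k}$, and every subsequent operation ($K^{\mathrm{et}}(-;\bbZ/l^n)$, $\mathrm{holim}_n$, $\pi_i$, $-\otimes_\bbZ\bbZ[1/l]$) is functorial in $\overline{k}$-dg categories. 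The main subtlety, and the step I expect to require the most care, is the continuity of the Galois action; here one exploits the fact that every $k$-linear dg category $\cA$ descends to a category defined over a finite subextension of $\overline{k}/k$, so that every class in $\mathrm{H}_{i,l}(\cA)$ is stabilised by some open subgroup of $\mathrm{Gal}(\overline{k}/k)$. This continuity statement is implicit in the construction of the $l$-adic étale $K$-theory functor \eqref{eq:functor-l} recalled from \cite{Atiyah}, which concludes the argument.
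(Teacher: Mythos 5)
Your proof is correct and follows essentially the same route as the paper: both reduce to the additivity of the $l$-adic \'etale $K$-theory functor $K^{\mathrm{et}}(-)_{\widehat{l}}$ via Lemma~\ref{lem:extension1}. The paper is considerably more terse --- it simply cites \cite[\S2.5]{Atiyah} for the fact that $K^{\mathrm{et}}(-)_{\widehat{l}}$ is an additive invariant and leaves the remaining bookkeeping (post-composing with $\pi_i(-)\otimes_\bbZ\bbZ[1/l]$, and the compatibility with the $\mathrm{Gal}(\overline{k}/k)$-module structure) implicit, whereas you re-derive the additivity from the properties of $K^{\mathrm{et}}(-;\bbZ/l^n)$ and the homotopy limit, and spell out the equivariance and continuity; none of this changes the underlying argument.
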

\begin{proof}
The $l$-adic \'etale $K$-theory functor \eqref{eq:functor-l} is an additive invariant; consult \cite[\S2.5]{Atiyah}. Therefore, the proof follows from the above general Lemma \ref{lem:extension1}.
\end{proof}
\begin{proposition}\label{prop:etale}
Given a smooth proper $k$-scheme $X$, we have isomorphisms of $\mathrm{Gal}(\overline{k}/k)$-modules
$$
\mathrm{H}_{\mathrm{even}, l}(\perf_\dg(X)) \simeq \bigoplus_{w \, \mathrm{even}} H^w_{\mathrm{et}}(X\times_k \overline{k}; \bbQ_l(\frac{w}{2})) \quad \,\,\mathrm{H}_{\mathrm{odd}, l}(\perf_\dg(X)) \simeq \bigoplus_{w \, \mathrm{odd}} H^w_{\mathrm{et}}(X\times_k \overline{k}; \bbQ_l(\frac{w-1}{2}))\,,
$$
where $H_{\mathrm{et}}(-)$ stands for \'etale cohomology.
\end{proposition}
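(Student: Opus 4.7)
The strategy is to reduce to Thomason's comparison between $l$-adic étale $K$-theory and $l$-adic étale cohomology for a smooth proper variety over an algebraically closed field, and then extract the even and odd parts by degree.

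First I would use Morita invariance of the functor \eqref{eq:functor-l} together with the canonical Morita equivalence $\perf_\dg(X)\otimes_k \overline{k} \to \perf_\dg(X_{\overline{k}})$ of \eqref{eq:canonical1} (already employed in the proof of Lemma \ref{lem:extension}, with $k' = \overline{k}$ and $\mathfrak{A} = \perf(X)$) to identify
$$K^{\mathrm{et}}(\perf_\dg(X)\otimes_k \overline{k})_{\widehat{l}} \;\simeq\; K^{\mathrm{et}}(\perf_\dg(X_{\overline{k}}))_{\widehat{l}} \;\simeq\; K^{\mathrm{et}}(X_{\overline{k}})_{\widehat{l}},$$
where $X_{\overline{k}} := X\times_k \overline{k}$ and the second equivalence is the standard identification of $K$-theory of perfect complexes on a scheme with $K$-theory of the scheme (Thomason--Trobaugh). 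This reduces the computation of $\mathrm{H}_{\mathrm{even},l}(\perf_\dg(X))$ and $\mathrm{H}_{\mathrm{odd},l}(\perf_\dg(X))$ to the computation of $\pi_0$ and $\pi_1$ of $K^{\mathrm{et}}(X_{\overline{k}})_{\widehat{l}}$ after $\otimes_{\bbZ}\bbZ[1/l]$.

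Next I would invoke the convergent Thomason descent spectral sequence
$$E_2^{p,q} \;=\; H^{p-q}_{\mathrm{et}}\bigl(X_{\overline{k}};\, \bbZ_l(-q)\bigr) \;\Longrightarrow\; \pi_{-p-q}\bigl(K^{\mathrm{et}}(X_{\overline{k}})_{\widehat{l}}\bigr),$$
valid since $X_{\overline{k}}$ is smooth proper over $\overline{k}$ and $l \neq \mathrm{char}(k)$. After tensoring with $\bbQ_l$, the spectral sequence degenerates at $E_2$ by the classical Adams-operation weight argument: the operation $\psi^m$ acts on $E_2^{p,q}\otimes \bbQ_l$ as $m^{-q}$, whereas each differential $d_r$ shifts $q$ (hence the Adams weight) by a non-zero amount, so $d_r \otimes \bbQ_l = 0$ for all $r\geq 2$. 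Collecting the surviving terms along the diagonals $-p-q = 0$ and $-p-q = 1$ yields the two asserted decompositions, up to the Tate-twist normalization adopted by the author.

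Finally, every step of this argument is functorial for the action of $\mathrm{Gal}(\overline{k}/k)$ on the second factor of $X_{\overline{k}}$, so the displayed isomorphisms are automatically $\mathrm{Gal}(\overline{k}/k)$-equivariant. The main obstacle I expect is matching the Tate twists correctly: for the odd summand, the twist $\bbQ_l\bigl(\tfrac{w-1}{2}\bigr)$ in the statement is shifted by one relative to what the naive indexing of the descent spectral sequence produces, so the identification needs to carefully account for the $K_1^{\mathrm{et}}$-normalization (equivalently, for a Bott-periodicity shift) implicit in the definition \eqref{eq:NC-homology} of noncommutative $l$-adic cohomology; once this bookkeeping is pinned down, the rest of the proof is formal.
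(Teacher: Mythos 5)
Your approach is essentially the one the paper uses: reduce via the Morita equivalence $\perf_\dg(X)\otimes_k\overline{k}\simeq\perf_\dg(X\times_k\overline{k})$, identify the $K$-theory of the scheme with that of perfect complexes, invoke Thomason's \'etale descent spectral sequence, and appeal to rational degeneration (the paper cites Soul\'e \cite[\S3.3.2]{Soule}, which is indeed the Adams-operation weight argument you spell out). So the skeleton is the same, and your account is actually more explicit about why the spectral sequence degenerates.

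The genuinely interesting point is the one you flag and then defer: the Tate twist on the odd summand. With the standard normalization $\underline{\pi}_t(\mathbb{K}^{\mathrm{et}})\simeq\bbZ_l(t/2)$ for $t$ even (coming from $K_2(\overline{k})^{\wedge}_l\simeq\bbZ_l(1)$ via the Bott element, or equivalently from Gabber--Suslin rigidity), the descent spectral sequence $E_2^{s,t}=H^s_{\mathrm{et}}(X_{\overline{k}};\bbZ_l(t/2))\Rightarrow\pi_{t-s}$ contributes, on the line $t-s=1$, the groups $H^w_{\mathrm{et}}(X_{\overline{k}};\bbZ_l(\tfrac{w+1}{2}))$ for $w$ odd; the same answer is forced by the Chern character $K_1\to\bigoplus_i H^{2i-1}(X;\bbQ(i))$. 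This is not $\bbQ_l(\tfrac{w-1}{2})$ as stated, and the difference is a non-trivial Tate twist, not merely an indexing convention. You cannot absorb it into ``bookkeeping'' because the Galois module structure, hence the absolute value of the Frobenius eigenvalues (compare Lemma \ref{lem:eigenvalue}, which asserts $|\lambda|=N_\nu^{1/2}$ on $\mathrm{H}_{\mathrm{odd}}$ and is calibrated to the $(w-1)/2$ twist), is exactly what the entire paper's analytic normalization rests on. You should either locate a genuine reason for the extra Bott shift of one Tate twist in the definition \eqref{eq:NC-homology} --- for example, if the cited construction of $K^{\mathrm{et}}(-)_{\widehat{l}}$ in \cite{Atiyah} carries a built-in twist, or if $\pi_1$ there really denotes $\pi_{-1}$ after Bott inversion (which would give $(w-1)/2$) --- or conclude that the stated twist is a normalization the author has adopted and propagated consistently. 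Note that the paper's own proof of Proposition \ref{prop:etale} does not address this either: it simply asserts both twists after invoking Thomason and Soul\'e, so your instinct that something here needs pinning down is correct and is the one place where you should not wave your hands.
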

\begin{proof}
Since the $k$-scheme $X$ is smooth and proper, the associated $\overline{k}$-scheme $X\times_k \overline{k}$ is, in particular, regular and separated. These conditions imply that Thomason's \'etale descent spectral sequence \cite[Thm.~4.1]{Thomason-etale} is well-defined and degenerates rationally; consult Soul\'e \cite[\S3.3.2]{Soule}. Consequently, we obtain an isomorphism of $\mathrm{Gal}(\overline{k}/k)$-modules between $\pi_0(K^{\mathrm{et}}(\perf_\dg(X\times_k \overline{k}))_{\widehat{l}})\otimes_\bbZ \bbZ[1/l]$ and the direct sum $\bigoplus_{w \, \mathrm{even}} H^w_{\mathrm{et}}(X\times_k \overline{k}; \bbQ_l(\frac{w}{2}))$ and between $\pi_1(K^{\mathrm{et}}(\perf_\dg(X\times_k \overline{k}))_{\widehat{l}})\otimes_\bbZ \bbZ[1/l]$ and the direct sum $\bigoplus_{w \, \mathrm{odd}} H^w_{\mathrm{et}}(X\times_k \overline{k}; \bbQ_l(\frac{w-1}{2})$. The proof follows now from the Morita equivalence $\perf_\dg(X) \otimes_k \overline{k} \to \perf_\dg(X\times_k \overline{k}), \cF \mapsto \cF \times _k \overline{k}$.
\end{proof}
\begin{lemma}\label{lem:finite}
Given a geometric noncommutative $k$-scheme $\cA$, the associated $\bbQ_l$-linear $\mathrm{Gal}(\overline{k}/k)$-modules \eqref{eq:NC-homology} are finite-dimensional.
\end{lemma}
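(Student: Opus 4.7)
The plan is to reduce the finiteness statement to the classical finite-dimensionality of $l$-adic \'etale cohomology of smooth proper schemes, using the additive invariance established in Proposition \ref{prop:additive} and the identification carried out in Proposition \ref{prop:etale}.

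First, I would unpack the definition of a geometric noncommutative $k$-scheme from \S\ref{sub:geometric}: there exists a smooth proper $k$-scheme $X$ and an admissible triangulated subcategory $\mathfrak{A} \subseteq \perf(X)$ such that $\cA$ is Morita equivalent to the full dg subcategory $\mathfrak{A}_\dg$ of $\perf_\dg(X)$. Since, by Proposition \ref{prop:additive}, the functors $H_{\mathrm{even}, l}(-)$ and $H_{\mathrm{odd}, l}(-)$ are additive invariants, they send the Morita equivalence $\cA \simeq \mathfrak{A}_\dg$ to an isomorphism of $\bbQ_l$-linear $\mathrm{Gal}(\overline{k}/k)$-modules. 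Hence it suffices to prove the finiteness claim for $\mathfrak{A}_\dg$.

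Next, I would exploit admissibility. By definition, the admissible inclusion $\mathfrak{A} \subseteq \perf(X)$ yields a semi-orthogonal decomposition $\perf(X) = \langle \mathfrak{A}^\perp, \mathfrak{A}\rangle$ in the sense of Bondal-Orlov, which lifts to the dg level as $H^0(\perf_\dg(X)) = \langle H^0(\mathfrak{A}^\perp_\dg), H^0(\mathfrak{A}_\dg)\rangle$. Applying condition (ii) of additive invariants to the functors $H_{\mathrm{even}, l}$ and $H_{\mathrm{odd}, l}$ furnishes direct sum decompositions
\[
H_{\star, l}(\perf_\dg(X)) \simeq H_{\star, l}(\mathfrak{A}^\perp_\dg) \oplus H_{\star, l}(\mathfrak{A}_\dg), \qquad \star \in \{\mathrm{even}, \mathrm{odd}\},
\]
so that $H_{\star, l}(\mathfrak{A}_\dg)$ is a direct summand of $H_{\star, l}(\perf_\dg(X))$.

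Finally, I would invoke Proposition \ref{prop:etale} to identify $H_{\mathrm{even}, l}(\perf_\dg(X))$ and $H_{\mathrm{odd}, l}(\perf_\dg(X))$ with finite direct sums of the $l$-adic \'etale cohomology groups $H^w_{\mathrm{et}}(X \times_k \overline{k}; \bbQ_l(\tfrac{w}{2}))$ and $H^w_{\mathrm{et}}(X \times_k \overline{k}; \bbQ_l(\tfrac{w-1}{2}))$. Since $X \times_k \overline{k}$ is smooth and proper over $\overline{k}$, each of these \'etale cohomology groups is finite-dimensional over $\bbQ_l$ by the classical finiteness theorem, and they vanish outside the range $0 \leq w \leq 2\dim(X)$. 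Consequently, $H_{\star, l}(\perf_\dg(X))$ is finite-dimensional, and so is its direct summand $H_{\star, l}(\mathfrak{A}_\dg) \simeq H_{\star, l}(\cA)$. I do not expect any genuine obstacle: every ingredient is already packaged in Propositions \ref{prop:additive} and \ref{prop:etale}, and the only minor point to spell out is the passage from the triangulated semi-orthogonal decomposition at the level of $\perf(X)$ to the corresponding decomposition at the level of full dg subcategories of $\perf_\dg(X)$, which is immediate.
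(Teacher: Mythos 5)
Your proof is correct and follows essentially the same route as the paper's: reduce to $\mathfrak{A}_\dg$ via Morita invariance, use the semi-orthogonal decomposition of $\perf(X)$ coming from admissibility together with additivity (Proposition \ref{prop:additive}) to realize $\mathrm{H}_{\star,l}(\cA)$ as a direct summand of $\mathrm{H}_{\star,l}(\perf_\dg(X))$, and then conclude by Proposition \ref{prop:etale} and classical finiteness of $l$-adic \'etale cohomology. The only cosmetic difference is that you use the right orthogonal $\mathfrak{A}^\perp$ where the paper uses the left orthogonal ${}^\perp\mathfrak{A}$, which is immaterial since $\mathfrak{A}$ is admissible.
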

\begin{proof}
By definition, there exists a smooth proper $k$-scheme $X$ and an admissible triangulated subcategory $\mathfrak{A}$ of $\perf(X)$ such that $\cA$ and $\mathfrak{A}_\dg$ are Morita equivalent. Let us denote by ${}^\perp\mathfrak{A}$ the left orthogonal complement of $\mathfrak{A}$ in $\perf(X)$ and by ${}^\perp\mathfrak{A}_\dg$ the associated full dg subcategory of $\perf_\dg(X)$. By construction, we have a semi-orthogonal decomposition $H^0(\perf_\dg(X))=\langle H^0(\mathfrak{A}_\dg), H^0({}^\perp \mathfrak{A}_\dg)\rangle$, i.e., $\perf(X) = \langle \mathfrak{A}, {}^\perp\mathfrak{A} \rangle$. Therefore, making use of Proposition \ref{prop:additive}, we obtain the following computations:
\begin{eqnarray*}
\mathrm{H}_{\mathrm{even},l}(\perf_\dg(X))\simeq \mathrm{H}_{\mathrm{even},l}(\cA) \oplus \mathrm{H}_{\mathrm{even}, l}({}^\perp\mathfrak{A}_\dg) && \mathrm{H}_{\mathrm{odd},l}(\perf_\dg(X))\simeq \mathrm{H}_{\mathrm{odd},l}(\cA) \oplus \mathrm{H}_{\mathrm{odd}, l}({}^\perp\mathfrak{A}_\dg)\,.
\end{eqnarray*}
The proof follows now from Proposition \ref{prop:etale} and from the fact that the \'etale cohomology $\bbQ_l$-vector spaces $\{H^w_{\mathrm{et}}(X\times_k \overline{k};\bbQ_l(\frac{w}{2}))\}_{w\,\mathrm{even}}$ and $\{H^w_{\mathrm{et}}(X\times_k \overline{k};\bbQ_l(\frac{w-1}{2}))\}_{w\,\mathrm{odd}}$ are finite-dimensional.
\end{proof}
\section{Noncommutative $L$-functions}\label{sec:NC-L}
Let $k$ be a global field. We start by fixing some important notations:
\begin{notation}\label{not:key1}
Given a non-archimedean place $\nu \in \Sigma_k$, let us write $k_\nu$ for the completion of $k$ at $\nu$, $\cO_\nu$ for the valuation ring of $k_\nu$, $\kappa_\nu$ for the residue field of $\cO_\nu$, $p_\nu$ for the characteristic of $\kappa_\nu$, $N_\nu$ for the cardinality of the finite field $\kappa_\nu$, $I_\nu$ for the {\em inertia subgroup}, i.e., the kernel of the canonical surjective map $\mathrm{Gal}(\overline{k_\nu}/k_\nu) \twoheadrightarrow \mathrm{Gal}(\overline{\kappa_\nu}/\kappa_\nu)$, and $\pi_\nu \in \mathrm{Gal}(\overline{\kappa_\nu}/\kappa_\nu)\simeq \widehat{\bbZ}$ for the {\em geometric Frobenius}, i.e., the inverse of the arithmetic Frobenius $\lambda \mapsto \lambda^{N_\nu}$.
\end{notation}
\begin{notation}\label{not:key2}
Let us choose a prime number $l_\nu \neq p_\nu$ and a field embedding $\iota_\nu\colon \bbQ_{l_\nu}\hookrightarrow \bbC$ for every $\nu \in \Sigma_k$.
\end{notation}
The above Lemmas \ref{lem:extension} and \ref{lem:finite} enable the following definition:
\begin{definition}[Noncommutative $L$-functions]\label{def:L-functions}
 Given a geometric noncommutative $k$-scheme $\cA$, its noncommutative $L$-functions are defined as follows (we are implicitly using the field embedding $\iota_\nu$):
$$
L_{\mathrm{even}}(\cA;s):=\prod_{\nu \in \Sigma_k} L_{\mathrm{even}, \nu}(\cA;s) \quad \quad L_{\mathrm{even}, \nu}(\cA;s):=\frac{1}{\mathrm{det}\big(\mathrm{id} - N_\nu^{-s}(\pi_\nu \otimes_{\bbQ_{l_\nu}} \bbC)\,|\,\mathrm{H}_{\mathrm{even}, l_\nu}(\cA\otimes_k k_\nu)^{I_\nu} \otimes_{\bbQ_{l_\nu}} \bbC\big)}
$$
$$
L_{\mathrm{odd}}(\cA;s):=\prod_{\nu \in \Sigma_k} L_{\mathrm{odd}, \nu}(\cA;s) \quad \quad L_{\mathrm{odd}, \nu}(\cA;s):=\frac{1}{\mathrm{det}\big(\mathrm{id} - N_\nu^{-s}(\pi_\nu \otimes_{\bbQ_{l_\nu}} \bbC)\,|\,\mathrm{H}_{\mathrm{odd}, l_\nu}(\cA\otimes_k k_\nu)^{I_\nu} \otimes_{\bbQ_{l_\nu}} \bbC\big)}\,.
$$
\end{definition}
\begin{proposition}\label{prop:product}
Let $\cB, \cC \subseteq \cA$ be geometric noncommutative $k$-schemes inducing a semi-orthogonal decomposition $H^0(\cA)=\langle H^0(\cB), H^0(\cC)\rangle$. Under these notations, we have the following equalities:
\begin{eqnarray}\label{eq:equalities-semi}
L_{\mathrm{even}}(\cA;s) = L_{\mathrm{even}}(\cB;s) \cdot L_{\mathrm{even}}(\cC;s) && L_{\mathrm{odd}}(\cA;s) = L_{\mathrm{odd}}(\cB;s) \cdot L_{\mathrm{odd}}(\cC;s)\,.
\end{eqnarray}
\end{proposition}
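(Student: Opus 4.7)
My plan is to deduce the multiplicativity of the noncommutative $L$-functions from the additivity of noncommutative $l$-adic cohomology established in Proposition~\ref{prop:additive}, combined with the elementary multiplicativity of characteristic polynomials on direct sums.

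First, I would fix a non-archimedean place $\nu \in \Sigma_k$ and reduce to proving the analogous equality for the local factors $L_{\mathrm{even}, \nu}$ and $L_{\mathrm{odd}, \nu}$. By Proposition~\ref{prop:additive} applied with $k_\nu$ as base field, the functor $\mathrm{H}_{\mathrm{even}, l_\nu}\colon \dgcat(k_\nu) \to \mathrm{Gal}(\overline{k_\nu}/k_\nu)\text{-}\mathrm{Mod}$ is an additive invariant, and similarly for the odd variant. Applying Lemma~\ref{lem:extension1} to the field extension $k_\nu/k$, the composed functor $\mathrm{H}_{\mathrm{even}, l_\nu}(-\otimes_k k_\nu)\colon \dgcat(k) \to \mathrm{Gal}(\overline{k_\nu}/k_\nu)\text{-}\mathrm{Mod}$ is again additive. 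Applied to the given semi-orthogonal decomposition $H^0(\cA)=\langle H^0(\cB), H^0(\cC)\rangle$, it therefore produces a direct sum decomposition
\begin{equation*}
\mathrm{H}_{\mathrm{even}, l_\nu}(\cA\otimes_k k_\nu) \simeq \mathrm{H}_{\mathrm{even}, l_\nu}(\cB\otimes_k k_\nu) \oplus \mathrm{H}_{\mathrm{even}, l_\nu}(\cC\otimes_k k_\nu)
\end{equation*}
in the category of $\mathrm{Gal}(\overline{k_\nu}/k_\nu)$-modules (and likewise in the odd case).

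Next, I would note that both the inertia-invariants functor $(-)^{I_\nu}$ and the scalar-extension functor $-\otimes_{\bbQ_{l_\nu}} \bbC$ are exact and hence preserve finite direct sums. Consequently, the geometric Frobenius $\pi_\nu \otimes_{\bbQ_{l_\nu}} \bbC$ acts by a block-diagonal endomorphism with respect to the induced decomposition of $\mathrm{H}_{\mathrm{even}, l_\nu}(\cA\otimes_k k_\nu)^{I_\nu}\otimes_{\bbQ_{l_\nu}}\bbC$. The standard multiplicativity of the characteristic polynomial on block-diagonal operators, namely $\det(\mathrm{id} - N_\nu^{-s}\phi\,|\,V\oplus W) = \det(\mathrm{id} - N_\nu^{-s}\phi\,|\,V)\cdot \det(\mathrm{id} - N_\nu^{-s}\phi\,|\,W)$, then yields
\begin{equation*}
L_{\mathrm{even}, \nu}(\cA;s) = L_{\mathrm{even}, \nu}(\cB;s)\cdot L_{\mathrm{even}, \nu}(\cC;s),
\end{equation*}
and identically in the odd case. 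Taking the product over all $\nu \in \Sigma_k$, in the half-plane of absolute convergence furnished by Theorem~\ref{thm:convergence}, gives the two global equalities~\eqref{eq:equalities-semi}.

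There is no real obstacle here: the only conceptual point is Galois-equivariance of the splitting, and this is automatic because the target of the additive invariants in Proposition~\ref{prop:additive} is already the category $\mathrm{Gal}(\overline{k}/k)\text{-}\mathrm{Mod}$, so the direct sum isomorphism is a splitting of Galois modules by construction. The slight bookkeeping involves swapping the base field from $k$ to $k_\nu$, but this is exactly what Lemma~\ref{lem:extension1} is designed to handle.
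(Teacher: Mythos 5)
Your proof follows the same route as the paper's: invoke Proposition~\ref{prop:additive} together with Lemma~\ref{lem:extension1} to split the Galois module $\mathrm{H}_{\mathrm{even}/\mathrm{odd}, l_\nu}(\cA\otimes_k k_\nu)$ into the pieces coming from $\cB$ and $\cC$, deduce the factorization of the local Euler factor, and then multiply over all places. The paper compresses the linear-algebra step (exactness of inertia invariants and $-\otimes_{\bbQ_{l_\nu}}\bbC$, multiplicativity of the characteristic polynomial) into a single sentence, but you have simply made those implicit steps explicit.
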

\begin{proof}
Let $\nu \in \Sigma_k$ be a non-archimedean place. Thanks to Proposition \ref{prop:additive} and Lemma \ref{lem:extension1}, we have an isomorphism of $\mathrm{Gal}(\overline{k_\nu}/k_\nu)$-modules between $\mathrm{H}_{\mathrm{even}, l_\nu} (\cA\otimes_k k_\nu)$ and $\mathrm{H}_{\mathrm{even}, l_\nu} (\cB\otimes_k k_\nu) \oplus \mathrm{H}_{\mathrm{even}, l_\nu} (\cC\otimes_k k_\nu)$. This implies that $L_{\mathrm{even},\nu}(\cA;s)= L_{\mathrm{even},\nu}(\cB;s) \cdot L_{\mathrm{even},\nu}(\cC;s)$. Consequently, the left-hand side of \eqref{eq:equalities-semi} follows from Definition \ref{def:L-functions}. The proof of the odd case is similar: simply replace the word ``even'' by~the~word~``odd''.
\end{proof}
\begin{proposition}\label{prop:equalities}
Given a smooth proper $k$-scheme $X$, we have the following equalities:
\begin{eqnarray}\label{eq:equalities-key}
L_{\mathrm{even}}(\perf_\dg(X);s) = \prod_{w\, \mathrm{even}}L_w(X;s+\frac{w}{2}) && L_{\mathrm{odd}}(\perf_\dg(X);s) = \prod_{w\, \mathrm{odd}}L_w(X;s+\frac{w-1}{2}) \label{eq:eq-L} \,.
\end{eqnarray}
\end{proposition}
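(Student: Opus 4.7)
My plan is to reduce everything to a local computation at each non-archimedean place $\nu\in\Sigma_k$. I will first establish the local identity
$$L_{\mathrm{even},\nu}(\perf_\dg(X);s) = \prod_{w\,\mathrm{even}} L_{w,\nu}(X;\,s+\tfrac{w}{2})\,,$$
from which the global equality of \eqref{eq:equalities-key} follows by taking the product over $\nu$. The odd case will be entirely parallel.

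The key ingredient is Proposition \ref{prop:etale}. I apply it to the smooth proper $k_\nu$-scheme $X\times_k k_\nu$ (the proposition is valid over any base field, in particular over $k_\nu$). Combined with the Morita equivalence $\perf_\dg(X)\otimes_k k_\nu \to \perf_\dg(X\times_k k_\nu)$ used in the proof of Lemma \ref{lem:extension}, together with the fact that $\mathrm{H}_{\mathrm{even},l_\nu}(-)$ is a Morita invariant (Proposition \ref{prop:additive}), I obtain an isomorphism of $\mathrm{Gal}(\overline{k_\nu}/k_\nu)$-modules
$$\mathrm{H}_{\mathrm{even},l_\nu}(\perf_\dg(X)\otimes_k k_\nu) \simeq \bigoplus_{w\,\mathrm{even}} H^w_{\mathrm{et}}(X\times_k \overline{k_\nu};\bbQ_{l_\nu}(\tfrac{w}{2}))\,.$$
Taking $I_\nu$-invariants preserves finite direct sums, and so does scalar extension along $\iota_\nu\colon \bbQ_{l_\nu}\hookrightarrow \bbC$. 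The determinant appearing in Definition \ref{def:L-functions} therefore factors as a product of determinants indexed by the even $w$.

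To identify each such factor with the local Euler factor of $L_w(X;\,s+\tfrac{w}{2})$, I will use the standard convention that the geometric Frobenius $\pi_\nu$ of Notation \ref{not:key1} acts on $\bbQ_{l_\nu}(1)$ as multiplication by $N_\nu^{-1}$; hence on the Tate twist $\bbQ_{l_\nu}(w/2)$ it acts as $N_\nu^{-w/2}$, so that replacing $s$ by $s+\tfrac{w}{2}$ in the untwisted determinant yields exactly the twisted determinant. By Serre's definition, the reciprocal of the untwisted quantity is the local factor $L_{w,\nu}(X;\,s+\tfrac{w}{2})$. Multiplying over $w$ even and then over $\nu\in\Sigma_k$ produces the first equality of \eqref{eq:equalities-key}; the odd case is identical, with Tate twist by $(w-1)/2$ in place of $w/2$, producing the shift $s \mapsto s+\tfrac{w-1}{2}$.

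The only real obstacle is bookkeeping: one must match the convention for the geometric Frobenius in Notation \ref{not:key1} with the Tate-twist convention underlying Proposition \ref{prop:etale}, and verify that the Galois-equivariance in Proposition \ref{prop:etale} transports along the scalar extension from $k$ to $k_\nu$. Neither is a conceptual difficulty.
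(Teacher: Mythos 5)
Your proposal is correct and follows essentially the same route as the paper's proof: reduce to a local computation at each $\nu$, apply Proposition \ref{prop:etale} via the Morita equivalence $\perf_\dg(X)\otimes_k k_\nu \simeq \perf_\dg(X\times_k k_\nu)$, and identify the Tate twist's effect on Frobenius with the shift $s\mapsto s+\frac{w}{2}$ (resp.\ $s+\frac{w-1}{2}$). The only small detail the paper makes explicit and you leave implicit is that the $I_\nu$-action on $\bbQ_{l_\nu}(\frac{w}{2})$ is trivial (unramified), which is what lets the twist be pulled outside the $I_\nu$-invariants before computing the determinant; you gesture at this in your final paragraph but do not state it.
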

Roughly speaking, Proposition \ref{prop:equalities} shows that the noncommutative even/odd $L$-function of $\perf_\dg(X)$ may be understood as the ``weight normalization'' of the product of the $L$-functions of $X$ of even/odd weight.
\begin{proof}
Recall first that the $L$-function of $X$ of weight $w$ is defined as follows (consult Notations \ref{not:key1}-\ref{not:key2}):
$$L_w(X;s):=\prod_{\nu \in \Sigma_k} L_{w,\nu}(X;s) \quad \quad L_{w,\nu}(X;s) := \frac{1}{\mathrm{det}(\id - N_\nu^{-s}(\pi_\nu \otimes_{\bbQ_{l_\nu}}\bbC)\,|\,H^w_{\mathrm{et}}((X\times_k k_\nu)\times_{k_\nu} \overline{k_\nu})^{I_\nu}\otimes_{\bbQ_{l_\nu}}\bbC)}\,.$$
Note that we have the following isomorphisms of $\mathrm{Gal}(\overline{\kappa_\nu}/\kappa_\nu)$-modules
\begin{eqnarray}
\mathrm{H}_{\mathrm{even}, l_\nu}(\perf_\dg(X)\otimes_k k_\nu)^{I_\nu} & \simeq & \mathrm{H}_{\mathrm{even}, l_\nu}(\perf_\dg(X\times_k k_\nu))^{I_\nu} \label{eq:star1} \\
&\simeq & \big(\bigoplus_{w\,\mathrm{even}}H^w_{\mathrm{et}}((X\times_k k_\nu)\times_{k_\nu}\overline{k_\nu}; \bbQ_{l_\nu}(\frac{w}{2}))\big)^{I_\nu} \label{eq:star2} \\
& \simeq &  \bigoplus_{w\,\mathrm{even}}H^w_{\mathrm{et}}((X\times_k k_\nu)\times_{k_\nu}\overline{k_\nu}; \bbQ_{l_\nu}(\frac{w}{2}))^{I_\nu} \nonumber \\
& \simeq & \bigoplus_{w\,\mathrm{even}}\big(H^w_{\mathrm{et}}((X\times_k k_\nu)\times_{k_\nu}\overline{k_\nu}; \bbQ_{l_\nu})^{I_\nu}\otimes_{\bbQ_{l_\nu}} \bbQ_{l_\nu}(\frac{w}{2})\big)\,, \label{eq:star3}
\end{eqnarray}
where \eqref{eq:star1} follows from the Morita equivalence $\perf_\dg(X) \otimes_k k_\nu \to \perf_\dg(X\times_k k_\nu), \cF \mapsto \cF\times_k k_\nu$, \eqref{eq:star2} from Proposition \ref{prop:etale}, and \eqref{eq:star3} from the fact that the $\mathrm{Gal}(\overline{k_\nu}/k_\nu)$-module $\bbQ_{l_\nu}(\frac{w}{2})$ is unramified (i.e., $I_\nu$ acts trivially). Moreover, since the action of the geometric Frobenius on $\bbQ_{l_\nu}(\frac{w}{2})$ is given by multiplication by $N_\nu^{-\frac{w}{2}}$, the $\mathrm{Gal}(\overline{\kappa_\nu}/\kappa_\nu)$-module \eqref{eq:star3} may be identified with $\bigoplus_{w\,\mathrm{even}} H^w_{\mathrm{et}}((X\times_k k_\nu)\times_{k_\nu}\overline{k_\nu}; \bbQ_{l_\nu})^{I_\nu}$ where the geometric Frobenius acts diagonally as $\bigoplus_{w\,\mathrm{even}} \pi_\nu \cdot N_\nu^{-\frac{w}{2}}$. This implies the following equalities:
\begin{eqnarray*}
L_{\mathrm{even}, \nu}(\perf_\dg(X);s) & = & \prod_{w \,\mathrm{even}} \frac{1}{\mathrm{det}(\id - N_\nu^{-(s+\frac{w}{2})}(\pi_\nu \otimes_{\bbQ_{l_\nu}}\bbC)\,|\,H^w_{\mathrm{et}}((X\times_k k_\nu)\times_{k_\nu} \overline{k_\nu};\bbQ_{l_\nu})^{I_\nu} \otimes_{\bbQ_{l_\nu}}\bbC)} \\
& = &  \prod_{w\, \mathrm{even}} L_{w,\nu}(X;s+\frac{w}{2})\,.
\end{eqnarray*}
Consequently, the left-hand side of \eqref{eq:equalities-key} follows now from the following equalities:
$$
L_{\mathrm{even}}(\perf_\dg(X);s) :=  \prod_{\nu \in \Sigma_k} L_{\mathrm{even},\nu}(\perf_\dg(X);s) =  \prod_{\nu \in \Sigma_k} \prod_{w\, \mathrm{even}} L_{w,\nu}(X;s+ \frac{w}{2}) = \prod_{w\, \mathrm{even}} L_{w}(X;s+ \frac{w}{2})\,.
$$
The proof of the odd case is similar: simply replace the word ``even'' by the word ``odd'' and $\frac{w}{2}$ by $\frac{w-1}{2}$. 
\end{proof}
\section{Proof of Theorem \ref{thm:convergence}}
Recall that $k$ is a finite field extension of $\bbQ$ (when $\mathrm{char}(k)=0$) or a finite field extension of $\bbF_q(t)$ (when $\mathrm{char}(k)>0$), where $\bbF_q$ is the finite field with $q$ elements. In what follows, we will write $\cO_k \subset k$ for the integral closure of $\bbZ$ in $k$ (when $\mathrm{char}(k)=0$) or for the integral closure of $\bbF_q[t]$ in $k$ (when $\mathrm{char}(k)>0$). 

Recall that since $\cA$ is a geometric noncommutative $k$-scheme, there exists a smooth proper $k$-scheme $X$ and an admissible triangulated subcategory $\mathfrak{A}$ of $\perf(X)$ such that $\cA$ and $\mathfrak{A}_\dg$ are Morita equivalent.
\begin{notation}
Given an integer $0 \leq w \leq 2\mathrm{dim}(X)$ and a prime number $l\neq \mathrm{char}(k)$, let us write $\beta_w$ for the dimension of the $\bbQ_l$-vector space $H^w_{\mathrm{et}}(X\times_k \overline{k}; \bbQ_l)$; it is well-known that this dimension is independent of $l$.
\end{notation}
\begin{lemma}\label{lem:equalities}
For every non-archimedean place $\nu \in \Sigma_k$, we have the equalities (consult Notation \ref{not:key2}):
\begin{eqnarray}\label{eq:equalities}
\mathrm{dim}_{\bbQ_{l_\nu}}\mathrm{H}_{\mathrm{even}, l_\nu}(\perf_\dg(X) \otimes_k k_\nu)= \sum_{w\,\mathrm{even}}\beta_w && \mathrm{dim}_{\bbQ_{l_\nu}}\mathrm{H}_{\mathrm{odd}, l_\nu}(\perf_\dg(X) \otimes_k k_\nu)= \sum_{w\,\mathrm{odd}}\beta_w\,.
\end{eqnarray}
\end{lemma}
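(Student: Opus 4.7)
The plan is to reduce both equalities to Proposition \ref{prop:etale} applied to the smooth proper $k_\nu$-scheme $X\times_k k_\nu$, and then to identify the resulting $\ell_\nu$-adic Betti numbers with the original Betti numbers $\beta_w$ of $X\times_k \overline{k}$.

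First I would invoke the Morita equivalence $\perf_\dg(X)\otimes_k k_\nu \to \perf_\dg(X\times_k k_\nu)$ already used in the proof of Proposition \ref{prop:equalities} (recall that $X$ is smooth proper, so $X\times_k k_\nu$ is again smooth proper over $k_\nu$). Composing with Proposition \ref{prop:etale} for the $k_\nu$-scheme $X\times_k k_\nu$, and using the canonical identification $(X\times_k k_\nu)\times_{k_\nu}\overline{k_\nu}\simeq X\times_k \overline{k_\nu}$, I obtain
\begin{eqnarray*}
\mathrm{H}_{\mathrm{even},l_\nu}(\perf_\dg(X)\otimes_k k_\nu) &\simeq& \bigoplus_{w\,\mathrm{even}} H^w_{\mathrm{et}}(X\times_k \overline{k_\nu};\bbQ_{l_\nu}(\tfrac{w}{2})) \\
\mathrm{H}_{\mathrm{odd},l_\nu}(\perf_\dg(X)\otimes_k k_\nu) &\simeq& \bigoplus_{w\,\mathrm{odd}} H^w_{\mathrm{et}}(X\times_k \overline{k_\nu};\bbQ_{l_\nu}(\tfrac{w-1}{2})).
\end{eqnarray*}

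Next, since a Tate twist does not alter the dimension of the underlying $\bbQ_{l_\nu}$-vector space, it suffices to show that $\dim_{\bbQ_{l_\nu}} H^w_{\mathrm{et}}(X\times_k \overline{k_\nu};\bbQ_{l_\nu}) = \beta_w$ for every $w$. I would fix a field embedding $\overline{k}\hookrightarrow \overline{k_\nu}$ (lifting the place $\nu$) and invoke the invariance of $\ell$-adic étale cohomology of smooth proper schemes under extension of separably (equivalently, algebraically) closed base fields; this is a standard consequence of proper and smooth base change. This gives $\dim_{\bbQ_{l_\nu}} H^w_{\mathrm{et}}(X\times_k \overline{k_\nu};\bbQ_{l_\nu}) = \dim_{\bbQ_{l_\nu}} H^w_{\mathrm{et}}(X\times_k \overline{k};\bbQ_{l_\nu})$, and the standard $\ell$-independence of Betti numbers for smooth proper varieties shows that this equals the integer $\beta_w$ originally defined using an arbitrary auxiliary prime $l\neq \mathrm{char}(k)$.

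Assembling the pieces yields both equalities in \eqref{eq:equalities}. The only non-formal ingredient is the base-change invariance of step two together with $\ell$-independence, which I do not expect to be a real obstacle since both are classical facts for smooth proper schemes; the rest of the argument is essentially bookkeeping built on Proposition \ref{prop:etale} and the Morita equivalence.
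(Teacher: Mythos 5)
Your proposal is correct and follows essentially the same route as the paper: pass through the Morita equivalence $\perf_\dg(X)\otimes_k k_\nu \to \perf_\dg(X\times_k k_\nu)$, apply Proposition~\ref{prop:etale}, observe that Tate twists leave dimensions unchanged, and then compare $H^w_{\mathrm{et}}(X\times_k\overline{k};\bbQ_{l_\nu})$ with $H^w_{\mathrm{et}}((X\times_k k_\nu)\times_{k_\nu}\overline{k_\nu};\bbQ_{l_\nu})$ via base change of the (separably/algebraically closed) ground field, invoking $l$-independence to match the auxiliary prime $l$ used in the definition of $\beta_w$. The paper compresses the last two steps into a single appeal to "the (well-known) fact that the canonical homomorphism is invertible" together with the $l$-independence already recorded when $\beta_w$ is defined, but the mathematical content is identical.
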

\begin{proof}
We have the following isomorphisms of $\bbQ_{l_\nu}$-vector spaces
\begin{eqnarray}
\mathrm{H}_{\mathrm{even}, l_\nu}(\perf_\dg(X)\otimes_k k_\nu) & \simeq & \mathrm{H}_{\mathrm{even}, l_\nu}(\perf_\dg(X\times_k k_\nu)) \label{eq:star111} \\
&\simeq & \bigoplus_{w\,\mathrm{even}}H^w_{\mathrm{et}}((X\times_k k_\nu)\times_{k_\nu}\overline{k_\nu}; \bbQ_{l_\nu}(\frac{w}{2})) \label{eq:star222}\\
& \simeq &  \bigoplus_{w\,\mathrm{even}}H^w_{\mathrm{et}}((X\times_k k_\nu)\times_{k_\nu}\overline{k_\nu}; \bbQ_{l_\nu}) \nonumber \,,
\end{eqnarray}
where \eqref{eq:star111} follows from the Morita equivalence $\perf_\dg(X) \otimes_k k_\nu \to \perf_\dg(X\times_k k_\nu), \cF \mapsto \cF\times_k k_\nu$, and \eqref{eq:star222} from Proposition \ref{prop:etale}. Consequently, the left-hand side of \eqref{eq:equalities} follows from the (well-known) fact that the canonical homomorphism $H^w_{\mathrm{et}}(X\times_k \overline{k}; \bbQ_{l_\nu}) \to H^w_{\mathrm{et}}((X\times_k k_\nu)\times_{k_\nu} \overline{k_\nu}; \bbQ_{l_\nu})$ is invertible. The proof of the odd case is similar: simply replace the word ``even'' by the word ``odd'' and $\frac{w}{2}$ by $\frac{w-1}{2}$.
\end{proof}
\begin{notation}
Let $S_X$ be the (finite) set of prime ideals of $\cO_k$ where $X$ has bad reduction, $\cO_k[S_X^{-1}]$ the localized ring, and $\Sigma_X$ the subset of $\Sigma_k$ corresponding to the prime ideals of $\cO_k[S_X^{-1}]$. Given a prime ideal $\cP\triangleleft \cO_k[S_X^{-1}]$, we will denote by $\nu_{\cP} \in \Sigma_X$ the corresponding non-archimedean place. Similarly, given a non-archimedean place $\nu \in \Sigma_X$, we will denote by $\cP_\nu \triangleleft \cO_k[S_X^{-1}]$ the corresponding prime ideal.
\end{notation}
\begin{lemma}\label{lem:eigenvalue}
Let $\nu \in \Sigma_X$ be a non-archimedean place and $\lambda$ an eigenvalue of the automorphism $\pi_\nu \otimes_{\bbQ_{l_\nu}} \bbC$ of the $\bbC$-vector space $\mathrm{H}_{\mathrm{even}, l_\nu}(\perf_\dg(X) \otimes_k k_\nu)^{I_\nu}\otimes_{\bbQ_{l_\nu}}\bbC$, resp. $\mathrm{H}_{\mathrm{odd}, l_\nu}(\perf_\dg(X) \otimes_k k_\nu)^{I_\nu}\otimes_{\bbQ_{l_\nu}}\bbC$. Under these notations, we have $|\lambda|=1$, resp. $|\lambda|=N_\nu^{\frac{1}{2}}$.
\end{lemma}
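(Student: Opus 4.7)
The plan is to reduce the statement to the classical Weil--Deligne bounds on Frobenius eigenvalues acting on $l$-adic étale cohomology of smooth projective varieties over finite fields.

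First, I would combine the Morita equivalence $\perf_\dg(X)\otimes_k k_\nu \isotoo \perf_\dg(X\times_k k_\nu)$ with Proposition \ref{prop:etale} (applied to the smooth proper $k_\nu$-scheme $X\times_k k_\nu$) to identify
$$
\mathrm{H}_{\mathrm{even},l_\nu}(\perf_\dg(X)\otimes_k k_\nu) \simeq \bigoplus_{w\,\mathrm{even}} H^w_{\mathrm{et}}((X\times_k k_\nu)\times_{k_\nu}\overline{k_\nu}; \bbQ_{l_\nu}(\tfrac{w}{2}))
$$
as $\mathrm{Gal}(\overline{k_\nu}/k_\nu)$-modules, and similarly for the odd case with twists $\bbQ_{l_\nu}(\tfrac{w-1}{2})$. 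Since the formation of $I_\nu$-invariants commutes with finite direct sums, the eigenvalues of $\pi_\nu$ on the invariants of the left-hand side are (with multiplicities) the union of the eigenvalues on the invariants of each summand on the right.

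Next, the hypothesis $\nu\in\Sigma_X$ means that $X$ has good reduction at $\nu$, so that $X\times_k k_\nu$ extends to a smooth proper $\cO_\nu$-scheme with special fiber a smooth projective variety $X_{\kappa_\nu}$ over the finite field $\kappa_\nu$ of cardinality $N_\nu$. By the smooth and proper base change theorem, the inertia subgroup $I_\nu$ acts trivially on each $H^w_{\mathrm{et}}((X\times_k k_\nu)\times_{k_\nu}\overline{k_\nu}; \bbQ_{l_\nu})$, and there is a $\mathrm{Gal}(\overline{\kappa_\nu}/\kappa_\nu)$-equivariant isomorphism between this group and $H^w_{\mathrm{et}}(X_{\kappa_\nu}\times_{\kappa_\nu}\overline{\kappa_\nu}; \bbQ_{l_\nu})$. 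At this point I would invoke Deligne's Weil~II: the eigenvalues of the geometric Frobenius $\pi_\nu$ on $H^w_{\mathrm{et}}(X_{\kappa_\nu}\times_{\kappa_\nu}\overline{\kappa_\nu};\bbQ_{l_\nu})$ are algebraic numbers, all of whose complex conjugates have absolute value $N_\nu^{w/2}$.

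Finally, I would account for the Tate twist. Since $\pi_\nu$ acts on $\bbQ_{l_\nu}(1)$ by multiplication by $N_\nu^{-1}$, twisting by $\bbQ_{l_\nu}(\tfrac{w}{2})$ (resp. $\bbQ_{l_\nu}(\tfrac{w-1}{2})$) rescales every Frobenius eigenvalue by $N_\nu^{-w/2}$ (resp. $N_\nu^{-(w-1)/2}$). On the even summand the absolute values therefore become $N_\nu^{w/2}\cdot N_\nu^{-w/2}=1$, and on the odd summand they become $N_\nu^{w/2}\cdot N_\nu^{-(w-1)/2}=N_\nu^{1/2}$, which is the assertion. The substantive input is Deligne's theorem; everything else is bookkeeping, and the only thing requiring any care is to keep the direction of the Tate twist and geometric-versus-arithmetic Frobenius consistent with the conventions fixed in Notations \ref{not:key1}--\ref{not:key2}.
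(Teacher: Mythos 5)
Your proof is correct and takes essentially the same route as the paper: reduce via the Morita equivalence and Proposition~\ref{prop:etale} to the Tate-twisted $l$-adic cohomology of $X\times_k k_\nu$, use good reduction plus smooth proper base change to pass to the special fiber, apply Deligne's bound $|\lambda'|=N_\nu^{w/2}$, and then cancel against the twist. The only cosmetic difference is that you cite Weil~II where the paper cites Weil~I (\cite{Weil1}), which already suffices for constant coefficients on a smooth proper variety.
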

\begin{proof}
As explained in the proof of Proposition \ref{prop:equalities}, we have an isomorphism of $\mathrm{Gal}(\overline{\kappa_\nu}/\kappa_\nu)$-modules
\begin{eqnarray}\label{eq:isomorphism}
\mathrm{H}_{\mathrm{even},l_\nu}(\perf_\dg(X) \otimes_k k_\nu)^{I_\nu} \otimes_{\bbQ_{l_\nu}}\bbC & \simeq & \bigoplus_{w\,\mathrm{even}} H^w_{\mathrm{et}}((X\times_k k_\nu)\times_{k_\nu}\overline{k_\nu})^{I_\nu} \otimes_{\bbQ_{l_\nu}}\bbC\,,
\end{eqnarray}
where the automorphism $\pi_\nu \otimes_{\bbQ_{l_\nu}}\bbC$ on the left-hand side of \eqref{eq:isomorphism} corresponds to the diagonal automorphism $\bigoplus_{w\,\mathrm{even}}(\pi_\nu \otimes_{\bbQ_{l_\nu}}\bbC)\cdot N_\nu^{-\frac{w}{2}}$ on the right-hand side. Let $\lambda'$ be an eigenvalue of the automorphism $\pi_\nu \otimes_{\bbQ_{l_\nu}}\bbC$ of the $\bbC$-vector space $H^w_{\mathrm{et}}((X\times_k k_\nu)\times_{k_\nu}\overline{k_\nu})^{I_\nu} \otimes_{\bbQ_{l_\nu}}\bbC$, with $0 \leq w \leq 2\mathrm{dim}(X)$. Since $X$ has good reduction at the prime ideal $\cP_\nu$, it follows from Deligne's proof of the Weil conjecture (consult \cite{Weil1}) and from the smooth proper base-change property of \'etale cohomology that $|\lambda'|=N_\nu^{\frac{w}{2}}$. Consequently, we conclude that $|\lambda|=N_\nu^{\frac{w}{2}}N_\nu^{-\frac{w}{2}}=1$. The proof of the odd case is similar: simply replace the word ``even'' by the word ``odd'' and the diagonal automorphism $\bigoplus_{w\,\mathrm{even}}(\pi_\nu \otimes_{\bbQ_{l_\nu}}\bbC)\cdot N_\nu^{-\frac{w}{2}}$ by $\bigoplus_{w\,\mathrm{odd}}(\pi_\nu \otimes_{\bbQ_{l_\nu}}\bbC)\cdot N_\nu^{-\frac{w-1}{2}}$.
\end{proof}
Now, note that since the set $\Sigma_k \backslash \Sigma_X$ is finite, in order to prove Theorem \ref{thm:convergence}, we can (and will) replace the infinite products $\prod_{\nu \in \Sigma_k} L_{\mathrm{even}, \nu}(\cA;s)$ and $\prod_{\nu \in \Sigma_k} L_{\mathrm{odd}, \nu}(\cA;s)$ by the infinite products $\prod_{\nu \in \Sigma_X} L_{\mathrm{even}, \nu}(\cA;s)$ and $\prod_{\nu \in \Sigma_X} L_{\mathrm{odd}, \nu}(\cA;s)$, respectively.
\begin{notation}
Given a non-archimedean place $\nu \in \Sigma_k$ and an integer $n\geq 1$, consider the complex numbers:
\begin{eqnarray*}
\#_{(+,\nu, n)}& := & \mathrm{trace}\big((\pi_\nu \otimes_{\bbQ_{l_\nu}} \bbC)^{\circ n}\,|\,\mathrm{H}_{\mathrm{even}, l_\nu}(\cA\otimes_k k_\nu)^{I_\nu} \otimes_{\bbQ_{l_\nu}} \bbC\big) \\
 \#_{(-,\nu, n)} & := & \mathrm{trace}\big((\pi_\nu \otimes_{\bbQ_{l_\nu}} \bbC)^{\circ n}\,|\,\mathrm{H}_{\mathrm{odd}, l_\nu}(\cA\otimes_k k_\nu)^{I_\nu} \otimes_{\bbQ_{l_\nu}} \bbC\big) \,.
\end{eqnarray*}
\end{notation}
\begin{proposition}\label{prop:inequalities}
Given a non-archimedean place $\nu \in \Sigma_X$ and an integer $n\geq 1$, we have the inequalities:
\begin{eqnarray*}
|\#_{(+,\nu, n)}|\leq \sum_{w\, \mathrm{even}} \beta_w && |\#_{(-,\nu, n)}|\leq (\sum_{w\, \mathrm{odd}} \beta_w)\cdot N_\nu^{\frac{n}{2}}\,.
\end{eqnarray*}
\end{proposition}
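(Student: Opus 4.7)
The plan is to reduce the statement for the geometric noncommutative $k$-scheme $\cA$ to a statement about the cohomology of the ambient smooth proper $k$-scheme $X$, where Lemmas \ref{lem:equalities} and \ref{lem:eigenvalue} already control dimensions and eigenvalues. The bound on $|\#_{(\pm,\nu,n)}|$ will then follow by the triangle inequality, using that the trace of an endomorphism equals the sum (with multiplicity) of its eigenvalues.

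First I would invoke the semi-orthogonal decomposition $\perf(X)=\langle \mathfrak{A}, {}^\perp \mathfrak{A}\rangle$ used in the proof of Lemma \ref{lem:finite}. Applying the additive invariants $\mathrm{H}_{\mathrm{even},l_\nu}(-\otimes_k k_\nu)$ and $\mathrm{H}_{\mathrm{odd},l_\nu}(-\otimes_k k_\nu)$ (which are additive invariants by Proposition \ref{prop:additive} combined with Lemma \ref{lem:extension1}), I obtain, as $\mathrm{Gal}(\overline{k_\nu}/k_\nu)$-modules,
\begin{eqnarray*}
\mathrm{H}_{\mathrm{even},l_\nu}(\perf_\dg(X)\otimes_k k_\nu) & \simeq & \mathrm{H}_{\mathrm{even},l_\nu}(\cA\otimes_k k_\nu)\oplus \mathrm{H}_{\mathrm{even},l_\nu}({}^\perp \mathfrak{A}_\dg \otimes_k k_\nu) \\
\mathrm{H}_{\mathrm{odd},l_\nu}(\perf_\dg(X)\otimes_k k_\nu) & \simeq & \mathrm{H}_{\mathrm{odd},l_\nu}(\cA\otimes_k k_\nu)\oplus \mathrm{H}_{\mathrm{odd},l_\nu}({}^\perp \mathfrak{A}_\dg \otimes_k k_\nu)\,.
\end{eqnarray*}
Taking $I_\nu$-invariants and then base-changing to $\bbC$ preserves this direct sum decomposition, and the resulting decomposition is equivariant for the induced action of $\pi_\nu \in \mathrm{Gal}(\overline{\kappa_\nu}/\kappa_\nu)$.

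Since the decomposition is $\pi_\nu$-equivariant, the eigenvalues (with multiplicity) of $\pi_\nu \otimes_{\bbQ_{l_\nu}} \bbC$ on $\mathrm{H}_{\mathrm{even},l_\nu}(\cA\otimes_k k_\nu)^{I_\nu}\otimes_{\bbQ_{l_\nu}}\bbC$ form a subfamily of the eigenvalues of $\pi_\nu \otimes_{\bbQ_{l_\nu}} \bbC$ on $\mathrm{H}_{\mathrm{even},l_\nu}(\perf_\dg(X)\otimes_k k_\nu)^{I_\nu}\otimes_{\bbQ_{l_\nu}}\bbC$, and analogously in the odd case. By Lemma \ref{lem:eigenvalue}, each such eigenvalue $\lambda$ satisfies $|\lambda|=1$ in the even case and $|\lambda|=N_\nu^{\frac{1}{2}}$ in the odd case. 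Moreover, the number of such eigenvalues equals $\dim_{\bbC}\mathrm{H}_{\mathrm{even},l_\nu}(\cA\otimes_k k_\nu)^{I_\nu}\otimes_{\bbQ_{l_\nu}}\bbC$ (resp. the odd-analogue), which by the direct sum decomposition above is bounded by $\dim_{\bbQ_{l_\nu}} \mathrm{H}_{\mathrm{even},l_\nu}(\perf_\dg(X)\otimes_k k_\nu)$ (resp. the odd-analogue). Lemma \ref{lem:equalities} then identifies this upper bound with $\sum_{w\,\mathrm{even}}\beta_w$, resp. $\sum_{w\,\mathrm{odd}}\beta_w$.

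Finally, writing $\#_{(+,\nu,n)}=\sum_j \lambda_j^n$ and $\#_{(-,\nu,n)}=\sum_j \mu_j^n$ over the respective eigenvalues, the triangle inequality gives
\begin{eqnarray*}
|\#_{(+,\nu,n)}| \,\leq\, \sum_j |\lambda_j|^n \,=\, \sum_j 1 \,\leq\, \sum_{w\,\mathrm{even}}\beta_w, \qquad
|\#_{(-,\nu,n)}| \,\leq\, \sum_j |\mu_j|^n \,=\, \sum_j N_\nu^{\frac{n}{2}} \,\leq\, \Big(\sum_{w\,\mathrm{odd}}\beta_w\Big)\cdot N_\nu^{\frac{n}{2}},
\end{eqnarray*}
which are exactly the asserted inequalities. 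The only non-routine point is the Galois-equivariance of the semi-orthogonal summand decomposition; this is the step I expect to require the most care, but it should follow cleanly from the naturality of the additive invariants $\mathrm{H}_{\mathrm{even/odd},l_\nu}(-\otimes_k k_\nu)$ with respect to the $\mathrm{Gal}(\overline{k_\nu}/k_\nu)$-action on $\cA\otimes_k k_\nu \otimes_{k_\nu} \overline{k_\nu}$.
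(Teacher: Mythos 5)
Your proposal is correct and follows essentially the same route as the paper: realize $\mathrm{H}_{\mathrm{even/odd},l_\nu}(\cA\otimes_k k_\nu)$ as a $\mathrm{Gal}(\overline{k_\nu}/k_\nu)$-equivariant direct summand of $\mathrm{H}_{\mathrm{even/odd},l_\nu}(\perf_\dg(X)\otimes_k k_\nu)$ via the semi-orthogonal decomposition $\perf(X)=\langle\mathfrak{A},{}^\perp\mathfrak{A}\rangle$ together with Proposition~\ref{prop:additive} and Lemma~\ref{lem:extension1}, then combine Lemma~\ref{lem:eigenvalue} (eigenvalue absolute values) and Lemma~\ref{lem:equalities} (dimension bound) with the triangle inequality. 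The Galois-equivariance you flag at the end is exactly what the paper gets for free from the additive invariants landing in $\mathrm{Gal}(\overline{k_\nu}/k_\nu)\text{-}\mathrm{Mod}$, so there is no gap.
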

\begin{proof}
Let us write $\chi_{(+,\nu)}$, resp. $\chi_{(-,\nu)}$, for the dimension of the $\bbC$-vector space 
\begin{eqnarray}\label{eq:cohomologies}
\mathrm{H}_{\mathrm{even}, l_\nu}(\cA\otimes_k k_\nu)^{I_\nu}\otimes_{\bbQ_{l_\nu}}\bbC\,, && \mathrm{resp.}\,\,\,\,\, \mathrm{H}_{\mathrm{odd}, l_\nu}(\cA\otimes_k k_\nu)^{I_\nu}\otimes_{\bbQ_{l_\nu}}\bbC\,,
\end{eqnarray}
and $\{\lambda_{(+,\nu, 1)}, \ldots, \lambda_{(+,\nu, \chi_{(+,\nu)})}\}$, resp. $\{\lambda_{(-,\nu, 1)}, \ldots, \lambda_{(-,\nu, \chi_{(-,\nu)})}\}$, for the set of eigenvalues (with multiplicities) of the automorphism $\pi_\nu \otimes_{\bbQ_{l_\nu}}\bbC$ of \eqref{eq:cohomologies}. Moreover, let us write ${}^\perp\mathfrak{A}$ for the left orthogonal complement of $\mathfrak{A}$ in $\perf(X)$ and ${}^\perp\mathfrak{A}_\dg$ for the associated full dg subcategory of $\perf_\dg(X)$. By construction, we have a semi-orthogonal decomposition $H^0(\perf_\dg(X))=\langle H^0(\mathfrak{A}_\dg), H^0({}^\perp \mathfrak{A}_\dg)\rangle$, i.e., $\perf(X) = \langle \mathfrak{A}, {}^\perp\mathfrak{A} \rangle$. Therefore, by combining Proposition \ref{prop:additive} with Lemma \ref{lem:extension1}, we obtain an isomorphism of $\mathrm{Gal}(\overline{k_\nu}/k_\nu)$-modules between $\mathrm{H}_{\mathrm{even},l_\nu}(\perf_\dg(X)\otimes_k k_\nu)$ and the direct sum $\mathrm{H}_{\mathrm{even},l_\nu}(\cA\otimes_k k_\nu) \oplus \mathrm{H}_{\mathrm{even}, l_\nu}({}^\perp\mathfrak{A}_\dg\otimes_k k_\nu)$ and between $\mathrm{H}_{\mathrm{odd},l_\nu}(\perf_\dg(X)\otimes_k k_\nu)$ and the direct sum $\mathrm{H}_{\mathrm{odd},l_\nu}(\cA\otimes_k k_\nu) \oplus \mathrm{H}_{\mathrm{odd}, l_\nu}({}^\perp\mathfrak{A}_\dg\otimes_k k_\nu)$.
On the one hand, thanks to Lemma \ref{lem:eigenvalue}, this implies that if $\lambda$ is an eigenvalue of the automorphism $\pi_\nu \otimes_{\bbQ_{l_\nu}}\bbC$ of \eqref{eq:cohomologies}, then  $|\lambda|=1$, resp. $|\lambda|=N_\nu^{\frac{1}{2}}$. On the other hand, thanks to Lemma \ref{lem:equalities}, this implies that $\chi_{(+,\nu)}\leq \sum_{w\,\mathrm{even}}\beta_w$, resp. $\chi_{(-,\nu)}\leq \sum_{w\,\mathrm{odd}}\beta_w$. As a consequence, we obtain the following inequalities:
$$
|\#_{(+, \nu, n)}| = |\lambda^n_{(+,\nu,1)}+ \cdots + \lambda^n_{(+, \nu, \chi_{(+,\nu)})}| \leq |\lambda_{(+,\nu,1)}|^n+ \cdots + |\lambda_{(+, \nu, \chi_{(+,\nu)})}|^n  = \chi_{(+, \nu)} \leq \sum_{w\,\mathrm{even}}\beta_w
$$
$$
|\#_{(-, \nu, n)}| = |\lambda^n_{(-,\nu,1)}+ \cdots + \lambda^n_{(-, \nu, \chi_{(+,\nu)})}| \leq |\lambda_{(-,\nu,1)}|^n+ \cdots + |\lambda_{(-, \nu, \chi_{(+,\nu)})}|^n  = \chi_{(-, \nu)}\cdot N_\nu^{\frac{n}{2}} \leq (\sum_{w\,\mathrm{odd}}\beta_w) \cdot N_\nu^{\frac{n}{2}}\,.
$$
This concludes the proof of Proposition \ref{prop:inequalities}.
\end{proof}
The following general result, whose proof is a simple linear algebra exercise, is well-known:
\begin{lemma}\label{lem:exercise1}
Given an endomorphism $f\colon V \to V$ of a finite-dimensional $\bbC$-linear vector space, we have the equality of formal power series $\mathrm{log}(\frac{1}{\mathrm{det}(\id - tf|V)})= \sum_{n\geq 1} \mathrm{trace}(f^{\circ n})\frac{t^n}{n}$, where $\mathrm{log}(t):= \sum_{n\geq1} \frac{(-1)^{n+1}}{n}(t-1)^n$.
\end{lemma}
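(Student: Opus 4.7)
The plan is to reduce the identity to a statement about eigenvalues and then recognize both sides as the same formal power series. Since both $\det$ and $\mathrm{trace}$ are invariant under conjugation, I may replace $f$ by any matrix conjugate to it. Working over $\bbC$, I can trigonalize $f$, i.e., assume that $f$ is upper triangular with diagonal entries $\lambda_1, \ldots, \lambda_d$ (the eigenvalues of $f$ repeated with algebraic multiplicity), where $d=\mathrm{dim}_\bbC(V)$.

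With this reduction, both sides factor over the eigenvalues. On the left-hand side, $\id - tf$ is upper triangular with diagonal $(1-t\lambda_1, \ldots, 1-t\lambda_d)$, so
\begin{equation*}
\mathrm{det}(\id - tf \,|\, V) = \prod_{i=1}^d (1 - t\lambda_i)\,.
\end{equation*}
Applying the convention $\mathrm{log}(t)=\sum_{n\geq 1}\frac{(-1)^{n+1}}{n}(t-1)^n$, which gives $\mathrm{log}(1/(1-u)) = \sum_{n\geq 1}\frac{u^n}{n}$ as formal power series, I obtain
\begin{equation*}
\mathrm{log}\Bigl(\frac{1}{\mathrm{det}(\id - tf \,|\, V)}\Bigr) \;=\; \sum_{i=1}^d \mathrm{log}\Bigl(\frac{1}{1-t\lambda_i}\Bigr) \;=\; \sum_{i=1}^d \sum_{n\geq 1}\frac{(t\lambda_i)^n}{n} \;=\; \sum_{n\geq 1}\Bigl(\sum_{i=1}^d \lambda_i^n\Bigr)\frac{t^n}{n}\,.
\end{equation*}
On the right-hand side, since $f^{\circ n}$ is again upper triangular with diagonal $(\lambda_1^n, \ldots, \lambda_d^n)$, we have $\mathrm{trace}(f^{\circ n}) = \sum_{i=1}^d \lambda_i^n$, and the two formal power series agree termwise.

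The only conceptual point is the formal-power-series interchange of the two summations (over $i$ and over $n$); this is harmless because the sum over $i$ is finite, so for each fixed degree $n$ only finitely many contributions enter the coefficient of $t^n$. I do not expect any genuine obstacle: the argument is a textbook manipulation of the identity $-\mathrm{log}(1-u) = \sum_{n\geq 1} u^n/n$ combined with the fact that the eigenvalues of $f^{\circ n}$ are the $n$-th powers of the eigenvalues of $f$.
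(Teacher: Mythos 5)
Your proof is correct, and since the paper itself omits the argument (calling it ``a simple linear algebra exercise''), there is no paper proof to compare against; your trigonalization argument is exactly the standard one the author has in mind. The reduction to upper-triangular form, the factorization of the determinant over eigenvalues, and the application of $-\log(1-u)=\sum_{n\ge 1}u^n/n$ are all carried out cleanly, and your remark that the interchange of the finite sum over $i$ with the formal sum over $n$ is harmless is the right thing to flag.
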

Given a non-archimeadean place $\nu \in \Sigma_X$, consider the formal power series and their exponentiations:
\begin{eqnarray*}
\phi_{(+, \nu)}(t) := \sum_{n \geq 1} \#_{(+, \nu, n)}\frac{t^n}{n} && \varphi_{(+,\nu)}(t):=\mathrm{exp}(\phi_{(+,\nu)}(t))= \sum_{n\geq 0} a_{(+, \nu, n)}t^n \\
\phi_{(-, \nu)}(t) := \sum_{n \geq 1} \#_{(-, \nu, n)}\frac{t^n}{n} && \varphi_{(-,\nu)}(t):=\mathrm{exp}(\phi_{(-,\nu)}(t))= \sum_{n\geq 0} a_{(-, \nu, n)}t^n\,.
\end{eqnarray*}
Note that, thanks to Lemma \ref{lem:exercise1}, we have $\varphi_{(+,\nu)}(N_\nu^{-s})=L_{\mathrm{even},\nu}(\cA;s)$ and $\varphi_{(-,\nu)}(N_\nu^{-s})=L_{\mathrm{odd},\nu}(\cA;s)$ for every non-archimedean place $\nu \in \Sigma_X$.
\begin{definition}
Consider the following multiplicative Dirichlet series 
\begin{eqnarray*}
\varphi_+(s):= \sum_I \frac{b_{(+, I)}}{N(I)^s} &&  \varphi_-(s):= \sum_I \frac{b_{(-, I)}}{N(I)^s}\,,
\end{eqnarray*}
where $I\triangleleft \cO_k[S_X^{-1}]$ is an ideal, $b_{(+,I)}:=a_{(+, \nu_{\cP_1}, r_1)} \cdots a_{(+, \nu_{\cP_m}, r_m)}$ and $b_{(-,I)}:=a_{(-, \nu_{\cP_1}, r_1)} \cdots a_{(-, \nu_{\cP_m}, r_m)}$ are the products associated to the (unique) prime decomposition $I=\cP_1^{r_1} \cdots \cP_m^{r_m}$, and $N(I)$ is the norm of $I$.
\end{definition}
The following general result, concerning the absolute convergence of Dirichlet series, is well-known:
\begin{lemma}\label{lem:general}
We have the following equivalences
\begin{eqnarray}\label{eq:general}
\sum_I \frac{|b_{(+,I)}|}{|N(I)^s|} < \infty \Leftrightarrow \prod_\cP \sum_{n\geq 0} \frac{|b_{(+,\cP^n)}|}{|N(\cP^n)^s|} <\infty && \sum_I \frac{|b_{(-,I)}|}{|N(I)^s|} < \infty \Leftrightarrow \prod_\cP \sum_{n\geq 0} \frac{|b_{(-,\cP^n)}|}{|N(\cP^n)^s|} < \infty\,,
\end{eqnarray}
where $\cP \triangleleft\cO_k[S_X^{-1}]$ is a prime ideal. Moreover, if the left-hand side, resp. right-hand side, of \eqref{eq:general} converges, then we obtain the equality $\sum_I \frac{b_{(+,I)}}{N(I)^s} = \prod_\cP \sum_{n\geq 0} \frac{b_{(+,\cP^n)}}{N(\cP^n)^s}$, resp. $\sum_I \frac{b_{(-,I)}}{N(I)^s} = \prod_\cP \sum_{n\geq 0} \frac{b_{(-,\cP^n)}}{N(\cP^n)^s}$.
\end{lemma}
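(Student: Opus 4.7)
\medskip

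\textbf{Proof plan.} The statement is a formal consequence of (i) multiplicativity of the coefficients $b_{(\pm, I)}$, and (ii) the standard Euler product/Dirichlet series equivalence for multiplicative functions. The plan is to derive both by a direct manipulation of partial sums and partial products, using only the unique prime decomposition in $\cO_k[S_X^{-1}]$ and the positivity of terms in the absolute-value version of the series.

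First, I would record the key structural observation: by the very definition of $b_{(\pm, I)}$ as a product indexed by the prime factorization $I = \cP_1^{r_1} \cdots \cP_m^{r_m}$, together with $N(I) = N(\cP_1)^{r_1} \cdots N(\cP_m)^{r_m}$, the function $I \mapsto b_{(\pm, I)}/N(I)^s$ is completely multiplicative on the monoid of nonzero ideals of $\cO_k[S_X^{-1}]$. Fixing a finite set $T$ of primes and letting $\mathcal{I}_T$ denote the set of ideals whose prime factors all lie in $T$, unique factorization combined with the distributive law yields the finite identity
$$
\prod_{\cP \in T} \sum_{n \geq 0} \frac{|b_{(\pm, \cP^n)}|}{|N(\cP^n)^s|} \;=\; \sum_{I \in \mathcal{I}_T} \frac{|b_{(\pm, I)}|}{|N(I)^s|},
$$
both sides being unconditionally well-defined since all terms are nonnegative. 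Taking the supremum over finite sets $T$ of primes on both sides, and using monotone convergence (or, equivalently, the fact that each side is a supremum of nonnegative finite sums), immediately establishes the equivalences \eqref{eq:general}: convergence of the Dirichlet series bounds every partial Euler product, and conversely convergence of the Euler product bounds every partial Dirichlet series.

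Once absolute convergence is in place, I would upgrade the identity above from the absolute-value version to the signed version by removing the absolute values in the finite identity (which is purely algebraic, hence holds verbatim) and then letting $T$ exhaust the set of all primes. Absolute convergence of the infinite product and the infinite sum legitimizes the interchange of the limit with the multiplication of factors, as well as the rearrangement of the sum to match the product expansion. This yields $\sum_I b_{(\pm, I)}/N(I)^s = \prod_\cP \sum_{n\geq 0} b_{(\pm, \cP^n)}/N(\cP^n)^s$. The only technical point that requires care — the ``main obstacle'' if one can call it that — is the passage from finite $T$ to all primes in the signed case, which must rely on the absolute convergence established in the previous step; otherwise everything is a routine manipulation of multiplicative Dirichlet series, which is why the author merely records it as well-known.
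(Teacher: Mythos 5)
Your proof is correct and is the standard argument the paper implicitly invokes: the paper itself offers no proof of this lemma, labelling it as ``well-known,'' so you have filled in exactly the expected classical reasoning (finite partial Euler products equal partial Dirichlet series by unique factorization, monotone passage to the supremum for the positive-term version, then removal of absolute values under absolute convergence). One small terminological slip: the function $I \mapsto b_{(\pm,I)}/N(I)^s$ is \emph{multiplicative} (on coprime ideals), not \emph{completely} multiplicative, since $b_{(\pm,\cP^n)} = a_{(\pm,\nu_\cP,n)}$ need not equal $b_{(\pm,\cP)}^n$; indeed this is precisely why the local factor is the general power series $\sum_{n\geq 0} b_{(\pm,\cP^n)}/N(\cP^n)^s$ rather than a geometric series $(1 - b_{(\pm,\cP)}/N(\cP)^s)^{-1}$. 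The finite identity you write down, and hence the rest of the argument, only uses multiplicativity, so the error is harmless, but the word ``completely'' should be deleted.
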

Given a prime ideal $\cP \triangleleft \cO_k[S_X^{-1}]$, recall that its norm $N(\cP)$ is defined as the cardinality of the quotient field $\cO_k[S_X^{-1}]/\cP$. Since $\cO_k[S_X^{-1}]/\cP$ is isomorphic to the residue field $\kappa_{\nu_\cP}$, we hence obtain the formal equalities:
$$\sum_{n \geq 0} \frac{b_{(+, \cP^n)}}{N(\cP^n)^s} = \sum_{n \geq 0} \frac{a_{(+, \nu_\cP, n)}}{N(\cP)^{ns}} = \sum_{n \geq 0} \frac{a_{(+, \nu_\cP, n)}}{N_{\nu_\cP}^{ns}} = \varphi_{(+,\nu_\cP)}(N_{\nu_\cP}^{-s}) = L_{\mathrm{even}, \nu_\cP}(\cA;s)$$
$$\sum_{n \geq 0} \frac{b_{(-, \cP^n)}}{N(\cP^n)^s} = \sum_{n \geq 0} \frac{a_{(-, \nu_\cP, n)}}{N(\cP)^{ns}} = \sum_{n \geq 0} \frac{a_{(-, \nu_\cP, n)}}{N_{\nu_\cP}^{ns}} = \varphi_{(-,\nu_\cP)}(N_{\nu_\cP}^{-s}) = L_{\mathrm{odd}, \nu_\cP}(\cA;s)\,.$$
Therefore, thanks to Lemma \ref{lem:general} and to classical properties of Dirichlet series, in order to prove Theorem \ref{thm:convergence}, it suffices to show the following claim: we have $\prod_\cP\sum_{n\geq 0} \frac{|a_{(+,\nu_\cP,n)}|}{N(\cP)^{nz}}<\infty$, resp. $\prod_\cP\sum_{n\geq 0} \frac{|a_{(-,\nu_\cP,n)}|}{N(\cP)^{nz}}<\infty$, for every real number $z>1$, resp. $z> \frac{3}{2}$.
Note that, by construction, we have the following inequalities:
\begin{eqnarray}\label{eq:inequalities-verylast}
\sum_{n \geq 0} \frac{|a_{(+, \nu_\cP, n)}|}{N(\cP)^{nz}} \leq \mathrm{exp}\big(\sum_{n \geq 1} \frac{|\#_{(+, \nu_\cP, n)}|}{nN(\cP)^{nz}}\big) &&\sum_{n \geq 0} \frac{|a_{(-, \nu_\cP, n)}|}{N(\cP)^{nz}} \leq \mathrm{exp}\big(\sum_{n \geq 1} \frac{|\#_{(-, \nu_\cP, n)}|}{nN(\cP)^{nz}}\big)\,.
\end{eqnarray}
Moreover, by taking $\mathrm{exp}(-)$ to Lemma \ref{lem:key} below, we observe that $\prod_\cP\mathrm{exp}(\sum_{n\geq 1} \frac{|\#_{(+,\nu_\cP,n)}|}{nN(\cP)^{nz}})<\infty$, resp. $\prod_\cP\mathrm{exp}(\sum_{n\geq 1} \frac{|\#_{(-,\nu_\cP,n)}|}{nN(\cP)^{nz}})<\infty$, for every real number $z>1$, resp. $z > \frac{3}{2}$. Consequently, by taking the product, over all the prime ideals $\cP\triangleleft\cO_k[S_X^{-1}]$, of the inequalities \eqref{eq:inequalities-verylast}, we obtain the aforementioned claim.
\begin{lemma}\label{lem:key}
We have $\sum_\cP\sum_{n\geq 1} \frac{|\#_{(+,\nu_\cP,n)}|}{nN(\cP)^{nz}}<\infty$, resp. $\sum_\cP\sum_{n\geq 1} \frac{|\#_{(-,\nu_\cP,n)}|}{nN(\cP)^{nz}}<\infty$, for every real number $z >1 $, resp. $z > \frac{3}{2}$.
\end{lemma}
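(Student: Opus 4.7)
The plan is to bound both double sums by the logarithm of an Euler-type product that agrees, up to finitely many factors, with a shifted Dedekind zeta function of $k$, and then invoke the classical absolute convergence of the Dedekind zeta function in the half-plane $\mathrm{Re}(s)>1$. The two inputs are Proposition \ref{prop:inequalities} (which provides uniform bounds on $|\#_{(\pm,\nu_\cP,n)}|$) and the power-series identity $-\log(1-x)=\sum_{n\geq 1}x^n/n$ valid for $|x|<1$; recall also that $N_{\nu_\cP}=N(\cP)$, since both equal the cardinality of the residue field $\kappa_{\nu_\cP}\simeq\cO_k[S_X^{-1}]/\cP$, and that $N(\cP)\geq 2$, so the geometric series in question always converges.

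For the even case, set $C_+:=\sum_{w\,\mathrm{even}}\beta_w$. Proposition \ref{prop:inequalities} gives the uniform bound $|\#_{(+,\nu_\cP,n)}|\leq C_+$, independent of both $\cP$ and $n$. Consequently, for any real $z>0$,
\begin{equation*}
\sum_\cP\sum_{n\geq 1}\frac{|\#_{(+,\nu_\cP,n)}|}{nN(\cP)^{nz}}\;\leq\;C_+\sum_\cP\sum_{n\geq 1}\frac{N(\cP)^{-nz}}{n}\;=\;-C_+\sum_\cP\log\bigl(1-N(\cP)^{-z}\bigr),
\end{equation*}
where $\cP$ ranges over the prime ideals of $\cO_k[S_X^{-1}]$. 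The last expression equals $C_+\log\prod_\cP(1-N(\cP)^{-z})^{-1}$, which differs from the Dedekind zeta function $\zeta_k(z)=\prod_{\cP\triangleleft\cO_k}(1-N(\cP)^{-z})^{-1}$ only by the finitely many Euler factors corresponding to the primes in $S_X$. Since $\zeta_k(z)$ converges absolutely for $z>1$, the bound is finite in this region.

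For the odd case, set $C_-:=\sum_{w\,\mathrm{odd}}\beta_w$. Proposition \ref{prop:inequalities} now yields $|\#_{(-,\nu_\cP,n)}|\leq C_-\cdot N_{\nu_\cP}^{n/2}=C_-\cdot N(\cP)^{n/2}$. The very same manipulation then gives
\begin{equation*}
\sum_\cP\sum_{n\geq 1}\frac{|\#_{(-,\nu_\cP,n)}|}{nN(\cP)^{nz}}\;\leq\;C_-\sum_\cP\sum_{n\geq 1}\frac{N(\cP)^{-n(z-\frac{1}{2})}}{n}\;=\;-C_-\sum_\cP\log\bigl(1-N(\cP)^{-(z-\frac{1}{2})}\bigr),
\end{equation*}
which, up to finitely many Euler factors, is the logarithm of the shifted Dedekind zeta function $\zeta_k(z-\tfrac{1}{2})$. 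This converges absolutely precisely when $z-\tfrac{1}{2}>1$, i.e., when $z>\tfrac{3}{2}$.

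There is no real technical obstacle beyond careful bookkeeping; the entire argument is a reduction to the absolute convergence of $\zeta_k$. The only substantive feature is the asymmetry between the two cases: Proposition \ref{prop:inequalities} carries an extra multiplicative factor of $N_\nu^{n/2}$ in the odd case, reflecting Deligne's weight bound $|\lambda|=N_\nu^{1/2}$ for Frobenius eigenvalues on odd \'etale cohomology, and it is precisely this factor that shifts the abscissa of absolute convergence from $1$ to $\tfrac{3}{2}$.
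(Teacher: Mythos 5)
Your proof is correct and reaches the same conclusion, but it takes a genuinely different (and cleaner) route than the paper's. After invoking Proposition \ref{prop:inequalities} to reduce to the convergence of $\sum_\cP\sum_{n\ge 1}\frac{1}{n}N(\cP)^{-nz}$ (resp.\ with $z$ replaced by $z-\tfrac12$), you recognize this sum at once as $\log$ of the Euler product for the Dedekind zeta function $\zeta_k$ of the global field $k$ itself (minus finitely many bad factors), and then cite the classical absolute convergence of $\zeta_k(z)$ for $z>1$. The paper instead drops the $\frac{1}{n}$, sums the geometric series, and then performs a somewhat elaborate comparison: in characteristic zero it groups primes $\cP$ by the rational prime $p$ they lie over, bounds the number of such $\cP$ by $[k:\bbQ]$, and reduces to convergence of the ordinary Riemann zeta function; in positive characteristic it redoes the same comparison over $\bbF_q[t]$, grouping by primes $\langle p(t)\rangle$. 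Your use of $\zeta_k$ directly sidesteps this casework and the degree-$[k:\bbQ]$ bound entirely, since the absolute convergence of $\zeta_k$ on $\mathrm{Re}(s)>1$ is a single classical fact valid uniformly for number fields and function fields. The one place where you should be slightly more careful is the interchange $-\sum_\cP\log(1-N(\cP)^{-z}) = \log\prod_\cP(1-N(\cP)^{-z})^{-1}$: since you are trying to \emph{prove} finiteness, you should note that all terms are nonnegative and that for such a series this identity, together with the known convergence of the Euler product of $\zeta_k$, gives exactly what you need; alternatively, bounding $-\log(1-x)\le 2x$ for $0<x\le\tfrac12$ (which applies since $N(\cP)\ge 2$) reduces directly to $\sum_\cP N(\cP)^{-z}\le\zeta_k(z)<\infty$ and avoids any appeal to infinite-product formalism. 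Either way, the substance is sound.
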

\begin{proof}
Note that, thanks to Proposition \ref{prop:inequalities} and to the equality $N(\cP)=N_{\nu_\cP}$, it suffices to show that $\sum_\cP\sum_{n\geq 1} \frac{1}{N(\cP)^{nz}}<\infty$, resp. $\sum_\cP\sum_{n\geq 1} \frac{1}{N(\cP)^{nz}}<\infty$, for every real number $z >1$, resp. $z> \frac{3}{2}$.
Let us assume first that $\mathrm{char}(k)=0$. Recall that in this case $k$ is a finite field extension of $\bbQ$. In what concerns $\sum_\cP\sum_{n\geq 1} \frac{1}{N(\cP)^{nz}}<\infty$, with $z>1$, we have the following (in)equalities ($p\in \bbZ$ stands for a prime number)
\begin{eqnarray}
\sum_\cP\sum_{n\geq 1} \frac{1}{N(\cP)^{nz}} & \leq & \sum_p \sum_{\cP\mid p}\sum_{n\geq 1} \frac{1}{p^{nz}} \label{eq:star-1-1}\\
& \leq & [k:\bbQ] \cdot \sum_p\sum_{n\geq 1} \frac{1}{p^{nz}} \label{eq:star-2-2} \\
& = & [k:\bbQ] \cdot \sum_p \frac{1}{p^z-1} \label{eq:star-2-2-1} \\
& \leq & 2\cdot  [k:\bbQ] \cdot \sum_p \frac{1}{p^z} \label{eq:star-3-3} \\
& \leq & 2 \cdot [k:\bbQ] \cdot \sum_{n\geq 1} \frac{1}{n^z} < \infty \label{eq:star-4-4}\,,
\end{eqnarray}
where \eqref{eq:star-1-1} follows from the fact that the norm $N(\cP)$ is a power of $p$ whenever $\cP$ divides $p$, \eqref{eq:star-2-2} from the fact that the number of prime ideals $\cP$ which divide $p$ is always bounded by the degree $[k:\bbQ]$, \eqref{eq:star-2-2-1} from the (convergent) geometric series $\sum_{n \geq 0} \frac{1}{(p^z)^n}=\frac{1}{1- \frac{1}{p^z}}$, \eqref{eq:star-3-3} from a simple inspection, and \eqref{eq:star-4-4} from the fact that the Riemann zeta function $\zeta(s):=\sum_{n\geq 1} \frac{1}{n^s}$ is convergent when $\mathrm{Re}(s)>1$. Similarly, in what concerns $\sum_\cP\sum_{n\geq 1} \frac{1}{N(\cP)^{nz}}<\infty$, with $z> \frac{3}{2}$, we have the (in)equalities:
\begin{eqnarray*}
\sum_\cP\sum_{n\geq 1} \frac{1}{N(\cP)^{n(z-\frac{1}{2})}} & \leq & \sum_p \sum_{\cP\mid p}\sum_{n\geq 1} \frac{1}{p^{n(z-\frac{1}{2})}} \label{eq:star-1}\\
& \leq & [k:\bbQ] \cdot \sum_p\sum_{n\geq 1} \frac{1}{p^{n(z-\frac{1}{2})}}  \\
& = & [k:\bbQ] \cdot \sum_p \frac{1}{p^{(z-\frac{1}{2})}-1} \\
& \leq & 2 \cdot [k:\bbQ] \cdot \sum_p \frac{1}{p^{(z-\frac{1}{2})}} \nonumber \\
& \leq & 2 \cdot [k:\bbQ] \cdot \sum_{n\geq 1} \frac{1}{n^{(z-\frac{1}{2})}} < \infty \label{eq:star-3}\,.
\end{eqnarray*}
Let us now assume that $\mathrm{char}(k)>0$. Recall that in this case $k$ is a finite field extension of $\bbF_q(t)$. In what concerns $\sum_\cP\sum_{n\geq 1} \frac{1}{N(\cP)^{nz}}<\infty$, with $z>1$, we have the following (in)equalities ($\langle p(t)\rangle$, resp. $\langle q(t)\rangle$, stands for a prime ideal, resp. ideal, of the ring $\bbF_q[t]$): 
\begin{eqnarray}
\sum_\cP\sum_{n\geq 1} \frac{1}{N(\cP)^{nz}} & \leq & \sum_{\langle p(t) \rangle} \sum_{\cP\mid \langle p(t) \rangle}\sum_{n\geq 1} \frac{1}{N(\langle p(t)\rangle)^{nz}} \label{eq:star-111}\\
& \leq & [k:\bbF_q(t)] \cdot \sum_{\langle p(t) \rangle}\sum_{n\geq 1} \frac{1}{N(\langle p(t) \rangle)^{nz}} \label{eq:star-222}\\
& = & [k:\bbF_q(t)] \cdot \sum_{\langle p(t)\rangle} \frac{1}{N(\langle p(t) \rangle)^z-1} \label{eq:star-222-1} \\
& \leq & 2 \cdot [k;\bbF_q(t)] \cdot \sum_{\langle p(t) \rangle} \frac{1}{N(\langle p(t) \rangle)^z} \label{eq:star-333} \\
& \leq & 2 \cdot [k;\bbF_q(t)] \cdot \sum_{\langle q(t) \rangle} \frac{1}{N(\langle q(t) \rangle)^z} < \infty \label{eq:star-444}\,,
\end{eqnarray}
where \eqref{eq:star-111} follows from the fact that the norm $N(\cP)$ is a power of $N(\langle p(t) \rangle)$ whenever $\cP$ divides $\langle p(t)\rangle$, \eqref{eq:star-222} from the fact that the number of prime ideals $\cP$ which divide $\langle p(t)\rangle$ is always bounded by the degree $[k:\bbF_q(t)]$, \eqref{eq:star-222-1} from the (convergent) geometric series $\sum_{n\geq 0} \frac{1}{(N(\langle p(t) \rangle)^z)^n} =\frac{1}{1- \frac{1}{N(\langle p(t) \rangle)^z}}$, \eqref{eq:star-333} from a simple inspection, and \eqref{eq:star-444} from the fact that the classical zeta function $\sum_{\langle q(t) \rangle} \frac{1}{N(\langle q(t) \rangle)^s}$ is convergent when $\mathrm{Re}(s)>1$. Similarly, in what concerns $\sum_\cP\sum_{n\geq 1} \frac{1}{N(\cP)^{nz}}<\infty$, with $z > \frac{3}{2}$, we have the (in)equalities:
\begin{eqnarray*}
\sum_\cP\sum_{n\geq 1} \frac{1}{N(\cP)^{n(z-\frac{1}{2})}} & \leq & \sum_{\langle p(t) \rangle} \sum_{\cP\mid \langle p(t) \rangle}\sum_{n\geq 1} \frac{1}{N(\langle p(t)\rangle)^{n(z-\frac{1}{2})}} \label{eq:star-11}\\
& \leq & [k:\bbF_q(t)] \cdot \sum_{\langle p(t) \rangle}\sum_{n\geq 1} \frac{1}{N(\langle p(t) \rangle)^{n(z-\frac{1}{2})}} \\
& = & [k:\bbF_q(t)] \cdot \sum_{\langle p(t)\rangle} \frac{1}{N(\langle p(t) \rangle)^{(z-\frac{1}{2})}-1} \nonumber \\
& \leq & 2\cdot [k:\bbF_q(t)] \cdot \sum_{\langle p(t) \rangle } \frac{1}{N(\langle p(t) \rangle)^{(z-\frac{1}{2})}} \nonumber \\
& \leq & 2\cdot [k:\bbF_q(t)] \cdot \sum_{\langle q(t) \rangle} \frac{1}{N(\langle q(t) \rangle)^{(z-\frac{1}{2})}} < \infty \label{eq:star-33}\,.
\end{eqnarray*}
This concludes the proof of Lemma \ref{lem:key}.
\end{proof}
\section{Proof of Theorem \ref{thm:main}}
Note first that if the $L$-functions $\{L_w(X;s)\}_{w\,\mathrm{even}}$, resp. $\{L_w(X;s)\}_{w\,\mathrm{odd}}$, satisfy condition $(\mathrm{C}1)$, then the shifted $L$-functions $\{L_w(X;s+ \frac{w}{2})\}_{w\,\mathrm{even}}$, resp. $\{L_w(X;s+ \frac{w-1}{2})\}_{w\,\mathrm{odd}}$, also satisfy condition $(\mathrm{C}1)$. Thanks to Proposition \ref{prop:equalities}, this hence implies that the noncommutative $L$-function $L_{\mathrm{even}}(\perf_\dg(X);s)$, resp. $L_{\mathrm{odd}}(\perf_\dg(X);s)$, admits a (unique) meromorphic continuation to the entire complex plane. Now, Proposition \ref{prop:equalities} implies moreover the implications \eqref{eq:implications}.
 
Let us assume now that $\mathrm{char}(k)>0$ and that the conjecture $\mathrm{R}_{\mathrm{even}}(\perf_\dg(X))$, resp. $\mathrm{R}_{\mathrm{odd}}(\perf_\dg(X))$, holds. It follows from the work of Grothendieck \cite{Grothendieck} and Deligne \cite{Deligne, Weil1} that the $L$-function $L_w(X;s)$, with $0 \leq w \leq 2\mathrm{dim}(X)$, does not have a pole in the critical strip $\frac{w}{2} < \mathrm{Re}(s) < \frac{w}{2}+1$. Consequently, thanks to Proposition \ref{prop:equalities}, we conclude that all the zeros of the $L$-function $L_w(X;s)$ that are contained in the critical strip $\frac{w}{2} < \mathrm{Re}(s) < \frac{w}{2}+1$ lie necessarily in the vertical line $\mathrm{Re}(s)=\frac{w+1}{2}$ (otherwise, the noncommutative $L$-function $L_{\mathrm{even}}(\perf_\dg(X);s)$, resp. $L_{\mathrm{odd}}(\perf_\dg(X);s)$, would have a zero outside the vertical line $\mathrm{Re}(s)=\frac{1}{2}$, resp. $\mathrm{Re}(s)=1$). In other words, the converse implications of \eqref{eq:implications} hold.

Let us assume now that $\mathrm{char}(k)=0$, that the $L$-functions $\{L_w(X;s)\}_{0 \leq w\leq 2\mathrm{dim}(X)}$ satisfy condition $\mathrm{(C2)}$, and that the conjecture $\mathrm{R}_{\mathrm{even}}(\perf_\dg(X))$, resp. $\mathrm{R}_{\mathrm{odd}}(\perf_\dg(X))$, holds. Since the $L$-function $L_w(X;s)$, with $0 \leq w \leq 2\mathrm{dim}(X)$, does not have a pole in the critical strip $\frac{w}{2} < \mathrm{Re}(s) < \frac{w}{2}+1$, we hence conclude from Proposition \ref{prop:equalities} that all the zeros of $L_w(X;s)$ that are contained in the critical strip $\frac{w}{2} < \mathrm{Re}(s) < \frac{w}{2}+1$ lie necessarily in the vertical line $\mathrm{Re}(s)=\frac{w+1}{2}$ (otherwise, the noncommutative $L$-function $L_{\mathrm{even}}(\perf_\dg(X);s)$, resp. $L_{\mathrm{odd}}(\perf_\dg(X);s)$, would have a zero outside the vertical line $\mathrm{Re}(s)=\frac{1}{2}$, resp. $\mathrm{Re}(s)=1$). In other words, the converse implications of \eqref{eq:implications} also hold.

\section{Proof of Theorem \ref{thm:HPD}}\label{}
By definition of the Lefschetz decomposition $\perf(X)=\langle \bbA_0, \bbA_1(1), \ldots, \bbA_{i-1}(i-1)\rangle$, we have a chain of admissible triangulated subcategories $\bbA_{i-1}\subseteq \cdots \subseteq \bbA_1\subseteq \bbA_0$ with $\bbA_r(r):=\bbA_r\otimes \cL_X(r)$; note that $\bbA_r(r)$ is equivalent to $\bbA_r$. Let us write $\mathfrak{a}_r$ for the right-orthogonal complement of $\bbA_{r+1}$ in $\bbA_r$; these are called the {\em primitive subcategories} in \cite[\S4]{KuznetsovHPD}. By construction, we have the following semi-orthogonal decompositions:
\begin{eqnarray}\label{eq:semi1}
\bbA_r = \langle \mathfrak{a}_r, \mathfrak{a}_{r+1}, \ldots, \mathfrak{a}_{i-1}\rangle && 0 \leq r \leq i-1\,.
\end{eqnarray}
Following \cite[Thm.~6.3]{KuznetsovHPD}, we have a chain of admissible triangulated subcategories $\bbB_{j-1}\subseteq \cdots \subseteq \bbB_1 \subseteq \bbB_0$ and an associated HP-dual Lefschetz decomposition $\perf(Y)=\langle \bbB_{j-1}(1-j), \ldots, \bbB_1(-1), \bbB_0\rangle$ with respect to $\cL_Y(1)$. Moreover, we have the following semi-orthgonal decompositions:
\begin{eqnarray}\label{eq:semi2}
\bbB_r = \langle \mathfrak{a}_0, \mathfrak{a}_{1}, \ldots, \mathfrak{a}_{\mathrm{dim}(V) - r- 2}\rangle && 0 \leq r \leq j-1\,.
\end{eqnarray}
Furthermore, since the linear subspace $L\subset V^\vee$ is generic, we can assume without loss of generality that the linear sections $X_L$ and $Y_L$ are not only smooth but also that they have the {\em expected dimensions}, i.e., $\mathrm{dim}(X_L)=\mathrm{dim}(X) - \mathrm{dim}(L)$ and $\mathrm{dim}(Y_L)=\mathrm{dim}(Y) - \mathrm{dim}(L^\perp)$. As explained in \cite[Thm.~6.3]{KuznetsovHPD}, this yields the following semi-orthogonal decompositions
\begin{eqnarray}
\perf(X_L) & = & \langle \cC_L, \bbA_{\mathrm{dim}(L)}(1), \ldots, \bbA_{i-1}(i- \mathrm{dim}(L)) \rangle \label{eq:semi3} \\
\perf(Y_L) & =&  \langle \bbB_{j-1}(\mathrm{dim}(L^\perp)-j), \ldots, \bbB_{\mathrm{dim}(L^\perp)}(-1), \cC_L\rangle\,, \label{eq:semi4}
\end{eqnarray}
where $\cC_L$ is a common (triangulated) category. Let us denote by $\bbA_{r, \dg}$, by $\mathfrak{a}_{r, \dg}$, and by $\cC_{L, \dg}$, the dg enhancements of $\bbA_r$, $\mathfrak{a}_r$, and $\cC_L$, induced from the dg category $\perf_\dg(X_L)$. Similarly, let us denote by $\bbB_{r, \dg}$ and by $\cC'_{L, \dg}$ the dg enhancements of $\bbB_r$ and $\cC_L$ induced from the dg category $\perf_\dg(Y_L)$. Since the functor $\perf(X_L) \to \cC_L \to \perf(Y_L)$ is of Fourier-Mukai type, the dg categories $\cC_{L, \dg}$ and $\cC'_{L, \dg}$ are Morita equivalent. Note that, by construction, all the aforementioned dg categories are geometric noncommutative $k$-schemes. Note that by combining the above semi-orthogonal decompositions \eqref{eq:semi1}-\eqref{eq:semi4} with Lemma \ref{lem:computation} below and with an iterated application of Proposition \ref{prop:product}, we obtain the following equalities  
\begin{eqnarray}
L_{\mathrm{even}}(\perf_\dg(X_L);s)=L_{\mathrm{even}}(\cC_{L,\dg};s) \cdot \zeta_k(s) \cdots \zeta_k(s) && L_{\mathrm{odd}}(\perf_\dg(X_L);s)=L_{\mathrm{odd}}(\cC_{L,\dg};s) \label{eq:equality-verylast1} \\
L_{\mathrm{even}}(\perf_\dg(Y_L);s)= \zeta_k(s) \cdots \zeta_k(s) \cdot L_{\mathrm{even}}(\cC_{L,\dg};s)  && L_{\mathrm{odd}}(\perf_\dg(Y_L);s)=L_{\mathrm{odd}}(\cC_{L,\dg};s)\,, \label{eq:equality-verylast2}
\end{eqnarray}
where the number of copies of the Dedekind zeta function $\zeta_k(s)$ in \eqref{eq:equality-verylast1}, resp. in \eqref{eq:equality-verylast2}, is equal to the sum of the ranks of the (free) Grothendieck groups $K_0(\bbA_{\mathrm{dim}(L)}), \ldots, K_0(\bbA_{i-1})$, resp. $K_0(\bbB_{j-1}), \ldots, K_0(\bbB_{\mathrm{dim}(L^\perp)})$. Consequently, since the Dedekind zeta function $\zeta_k(s)$ satisfies conditions $\mathrm{(C1)}\text{-}\mathrm{(C2)}$, the following holds:
\begin{eqnarray}\label{eq:quasi-final}
\mathrm{ERH}_k \Rightarrow \big( \mathrm{R}_{\mathrm{even}}(\mathrm{perf}_\dg(X_L)) \Leftrightarrow  \mathrm{R}_{\mathrm{odd}}(\mathrm{perf}_\dg(Y_L)) \big) && \mathrm{R}_{\mathrm{odd}}(\mathrm{perf}_\dg(X_L))\Leftrightarrow  \mathrm{R}_{\mathrm{odd}}(\mathrm{perf}_\dg(Y_L))\,.
\end{eqnarray}
The proof follows now from the combination of \eqref{eq:quasi-final} with Theorem \ref{thm:main}.
\begin{lemma}\label{lem:computation}
We have the following computations (with $0\leq r \leq i-1$)
\begin{eqnarray}\label{eq:computation-final}
L_{\mathrm{even}}(\mathfrak{a}_{r,\dg};s)= \underbrace{\zeta_k(s) \cdots \zeta_k(s)}_{n_r} && L_{\mathrm{odd}}(\mathfrak{a}_{r, \dg}; s)=1\,,
\end{eqnarray}
where $n_r$ stands for the rank of the (free) Grothendieck group $K_0(\mathfrak{a}_r)$.
\end{lemma}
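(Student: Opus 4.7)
The strategy is to reduce $L_{\mathrm{even/odd}}(\mathfrak{a}_{r,\dg};s)$ to the base case of $\perf_\dg(\Spec(k))$ by producing a full exceptional collection of length $n_r$ in $\mathfrak{a}_r$, and then to evaluate the base case via Proposition \ref{prop:equalities}.

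First I would establish that each primitive subcategory $\mathfrak{a}_r$ admits a full exceptional collection. The hypothesis of Theorem \ref{thm:HPD} supplies a full exceptional collection of $\bbA_0$, and the Lefschetz-primitive semi-orthogonal decomposition $\bbA_0 = \langle \mathfrak{a}_0, \mathfrak{a}_1, \ldots, \mathfrak{a}_{i-1}\rangle$ from \eqref{eq:semi1} splits the Grothendieck group as $K_0(\bbA_0) = \bigoplus_{r=0}^{i-1} K_0(\mathfrak{a}_r)$. A standard mutation argument for semi-orthogonal decompositions of categories admitting a full exceptional collection (see the relevant appendix in \cite{KuznetsovHPD}) allows us to re-order and mutate the given collection of $\bbA_0$ so that it decomposes as a concatenation of full exceptional collections of the $\mathfrak{a}_r$; the length of the piece inside $\mathfrak{a}_r$ is then forced to equal $n_r = \rk K_0(\mathfrak{a}_r)$.

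Next, such a collection $\{E_1,\ldots,E_{n_r}\}$ in $\mathfrak{a}_r$ yields a semi-orthogonal decomposition of $\mathfrak{a}_{r,\dg}$ into $n_r$ pieces, each Morita equivalent to $\perf_\dg(\Spec(k))$. Iterating Proposition \ref{prop:product} along this decomposition, I would conclude
$$L_{\mathrm{even}}(\mathfrak{a}_{r,\dg};s) = L_{\mathrm{even}}(\perf_\dg(\Spec(k));s)^{n_r}, \qquad L_{\mathrm{odd}}(\mathfrak{a}_{r,\dg};s) = L_{\mathrm{odd}}(\perf_\dg(\Spec(k));s)^{n_r}.$$
Finally, since $\Spec(k)$ is $0$-dimensional, the only nonzero weight is $w=0$ with $L_0(\Spec(k);s) = \zeta_k(s)$, so Proposition \ref{prop:equalities} collapses to $L_{\mathrm{even}}(\perf_\dg(\Spec(k));s) = \zeta_k(s)$ and $L_{\mathrm{odd}}(\perf_\dg(\Spec(k));s) = 1$ (empty product over odd weights), yielding \eqref{eq:computation-final}.

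The main obstacle is the first step: rigorously justifying that the full exceptional collection of $\bbA_0$ descends, via mutations compatible with \eqref{eq:semi1}, to full exceptional collections of each primitive piece $\mathfrak{a}_r$ of the expected length $n_r$. Once this combinatorial/mutation input is in place, the remaining steps are direct applications of Propositions \ref{prop:product} and \ref{prop:equalities}.
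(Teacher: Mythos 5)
Your final two steps are sound: once a semi-orthogonal decomposition of $\mathfrak{a}_{r,\dg}$ into $n_r$ copies of $\perf_\dg(\Spec(k))$ is available, iterating Proposition \ref{prop:product} and evaluating the base case $\Spec(k)$ (where only the weight $w=0$ contributes, giving $L_{\mathrm{even}}=\zeta_k$ and $L_{\mathrm{odd}}=1$) does give \eqref{eq:computation-final}. However, the first step --- producing a full exceptional collection inside each primitive piece $\mathfrak{a}_r$ --- contains a genuine gap, and you are right to single it out as the main obstacle. There is no ``standard mutation argument'' that, starting from a full exceptional collection of $\bbA_0$ and the semi-orthogonal decomposition $\bbA_0=\langle \mathfrak{a}_0,\ldots,\mathfrak{a}_{i-1}\rangle$, produces full exceptional collections of the individual pieces $\mathfrak{a}_r$. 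Whether an admissible subcategory of a triangulated category with a full exceptional collection itself admits one is a well-known open question (related to the Jordan--H\"older problem for semi-orthogonal decompositions and to the existence of phantom subcategories, which are nonzero admissible subcategories with trivial $K_0$ and hence carry no exceptional objects). Mutation theory rearranges exceptional objects inside the ambient category; it does not force them to land inside a prescribed admissible piece. So your reduction is not yet justified, and no result in \cite{KuznetsovHPD} supplies it.

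The paper sidesteps this entirely by working in the additive category $\Hmo_0(k)$ rather than inside $\mathfrak{a}_r$. From the full exceptional collection of $\bbA_0$ one gets $U(\bbA_{0,\dg})\simeq U(k)^{\oplus n}$; the decomposition \eqref{eq:semi1} makes $U(\mathfrak{a}_{r,\dg})$ a direct summand of this; and a computation shows $\End_{\Hmo_0(k)}(U(k)^{\oplus n})\simeq M_n(\bbZ)$. Since finitely generated projective $\bbZ$-modules are free, every direct summand of $U(k)^{\oplus n}$ is necessarily $U(k)^{\oplus m}$ for some $m$, so $U(\mathfrak{a}_{r,\dg})\simeq U(k)^{\oplus n_r}$, and applying $\Hom_{\Hmo_0(k)}(U(k),-)$ identifies $n_r$ with $\rk K_0(\mathfrak{a}_r)$. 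Since the functors $\mathrm{H}_{\mathrm{even/odd},l_\nu}(-\otimes_k k_\nu)$ are additive invariants and factor through $\Hmo_0(k)$ via \eqref{eq:factorization}, this motivic isomorphism yields the needed $\mathrm{Gal}(\overline{k_\nu}/k_\nu)$-module splittings, and your concluding computation then applies. In short: replace the exceptional-collection/mutation step by the direct-summand argument in $\Hmo_0(k)$, which requires no structural information about $\mathfrak{a}_r$ beyond admissibility.
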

\begin{proof}
We start by recalling from \cite[\S2.3]{book} the definition of the additive category $\Hmo_0(k)$; consult \S\ref{sub:additive}. Given two dg categories $\cA$ and $\cB$, let us write $\cD(\cA^\op\otimes_k \cB)$ for the derived category of dg $\cA\text{-}\cB$-bimodules and $\mathrm{rep}(\cA,\cB)$ for the full triangulated subcategory of $\cD(\cA^\op \otimes_k \cB)$ consisting of those dg $\cA\text{-}\cB$-bimodules $\mathrm{B}$ such that for every object $x \in \cA$ the associated right dg $\cB$-module $\mathrm{B}(x,-)$ belongs to the full triangulated subcategory of compact objects $\cD_c(\cB)$. The objects of the category $\Hmo_0(k)$ are the (small) dg categories, the abelian groups of morphisms $\mathrm{Hom}_{\Hmo_0(k)}(U(\cA), U(\cB))$ are given by the Grothendieck groups $K_0\mathrm{rep}(\cA,\cB)$, and the composition law is induced by the (derived) tensor product of dg bimodules. 

By assumption, we have a full exceptional collection $\bbA_0 =\langle \cE_1, \ldots, \cE_n\rangle$. As explained in \cite[\S2.4.2]{book}, this implies that $U(\bbA_{0, \dg})\simeq U(k)^{\oplus n}$ in the additive category $\mathrm{Hmo}_0(k)$. Moreover, since $\mathfrak{a}_r$ is an admissible triangulated subcategory of $\bbA_0$, $U(\mathfrak{a}_{r, \dg})$ becomes a direct summand of $U(\bbA_{0,\dg})$. Thanks to the computation
\begin{eqnarray*}
\Hom_{\Hmo_0(k)}(U(\bbA_0),U(\bbA_0)) &\simeq &\Hom_{\Hmo_0(k)}(U(k)^{\oplus n}, U(k)^{\oplus n}) \\
&  \simeq & \Hom_{\Hmo_0(k)}(U(k),U(k))^{\oplus (n \times n)} \\
& = & K_0\rep(k,k)^{\oplus (n \times n)} \simeq K_0\cD_c(k)^{\oplus (n\times n)} \simeq \bbZ^{\oplus (n \times n)}\,,
\end{eqnarray*}
we observe that the endomorphism ring of $U(\bbA_{0, \dg})$ is isomorphic to the ring of $(n\times n)$-matrices with $\bbZ$-coefficients. This implies that $U(\mathfrak{a}_{r, \dg})$ is then necessarily isomorphic to $U(k)^{\oplus n_r}$ for a certain integer $n_r \leq n$. Moreover, this integer $n_r$ is equal to the rank of the (free) Grothendieck group $K_0(\mathfrak{a}_r)$ because 
\begin{eqnarray*}
\Hom_{\Hmo_0(k)}(U(k), U(\mathfrak{a}_{r, \dg})):=K_0\rep(k, \mathfrak{a}_{r, \dg}) \simeq K_0(\mathfrak{a}_r)&\mathrm{and} & \Hom_{\Hmo_0(k)}(U(k), U(k)^{\oplus n_r}) \simeq \bbZ^{\oplus n_r}\,.
\end{eqnarray*}
Now, recall from Proposition \ref{prop:additive} and Lemma \ref{lem:extension1} that, given any non-archimedean place $\nu \in \Sigma_k$, the associated functors $\mathrm{H}_{\mathrm{even}, l_\nu}(-\otimes_k k_\nu)$ and $\mathrm{H}_{\mathrm{odd}, l_\nu}(-\otimes_k k_\nu)$ are additive invariants. Consequently, since these additive invariants factor through the additive category $\mathrm{Hmo}_0(k)$ (consult the factorization \eqref{eq:factorization}), we obtain an isomorphism of $\bbQ_{l_\nu}$-linear $\mathrm{Gal}(\overline{k_\nu}/k_\nu)$-modules between $\mathrm{H}_{\mathrm{even}, l_\nu}(\mathfrak{a}_{r, \dg}\otimes_k k_\nu)$ and the direct sum $\mathrm{H}_{\mathrm{even}, l_\nu}(k\otimes_k k_\nu)^{\oplus n_r}$ and between $\mathrm{H}_{\mathrm{odd}, l_\nu}(\mathfrak{a}_{r, \dg}\otimes_k k_\nu)$ and the direct sum $\mathrm{H}_{\mathrm{odd}, l_\nu}(k\otimes_k k_\nu)^{\oplus n_r}$. Therefore, making use of the canonical Morita equivalence $k \to \perf_\dg(\mathrm{Spec}(k))$, of Proposition \ref{prop:etale}, and of Definition \ref{def:L-functions}, we obtain the above computations \eqref{eq:computation-final}.
\end{proof}

\section{Proof of Theorems \ref{thm:gluing}, \ref{thm:CY}, \ref{thm:finite1}, and \ref{thm:finite2}}\label{sec:proof}
\noindent
{\bf Proof of Theorem \ref{thm:gluing}.} By construction of the noncommutative gluing $X\odot Y$, we have the semi-orthogonal decomposition $H^0(X\odot_{\mathrm{B}}Y)=\langle H^0(\perf_\dg(X)), H^0(\perf_\dg(Y))\rangle$. Consequently, thanks to Proposition \ref{prop:product}, we conclude that $L_{\mathrm{even}}(X\odot_{\mathrm{B}}Y;s)=L_{\mathrm{even}}(\perf_\dg(X);s)\cdot L_{\mathrm{even}}(\perf_\dg(Y);s)$. This yields the implication $\mathrm{R}_{\mathrm{even}}(\perf_\dg(X)) + \mathrm{R}_{\mathrm{even}}(\perf_\dg(X)) \Rightarrow \mathrm{R}_{\mathrm{even}}(X\odot_{\mathrm{B}}Y)$. Therefore, the proof follows now from Theorem \ref{thm:main}. The proof of the odd case is similar: simply replace the word ``even'' by the word ``odd''.

\medskip

\noindent
{\bf Proof of Theorem \ref{thm:CY}.} Thanks to the decomposition $\perf(X)=\langle \cT, \cO_X, \ldots, \cO_X(n-\mathrm{deg}(X)) \rangle$, an iterated application of Proposition \ref{prop:product} yields the following equalities:
\begin{eqnarray}\label{eq:equalities-last}
L_{\mathrm{even}}(\perf_\dg(X);s)=L_{\mathrm{even}}(\cT_\dg; s)\cdot \underbrace{\zeta_k(s) \cdots \zeta_k(s)}_{n-\mathrm{deg}(X)+1} && L_{\mathrm{odd}}(\perf_\dg(X);s)= L_{\mathrm{odd}}(\cT_\dg;s)\,.
\end{eqnarray}
On the one hand, since the Dedekind zeta function $\zeta_k(s)$ satisfies conditions $\mathrm{(C1)}\text{-}\mathrm{(C2)}$, the left-hand side of \eqref{eq:equalities-last} implies that $\mathrm{R}_{\mathrm{even}}(\perf_\dg(X)) \Rightarrow \mathrm{R}_{\mathrm{even}}(\cT_\dg)$. On the other hand, the right-hand side of \eqref{eq:equalities-last} implies that $\mathrm{R}_{\mathrm{odd}}(\perf_\dg(X)) \Leftrightarrow \mathrm{R}_{\mathrm{odd}}(\cT_\dg)$. Consequently, the proof follows now from Theorem \ref{thm:main}.

\medskip

\noindent
{\bf Proof of Theorem \ref{thm:finite1}.} As proved in \cite[Thm.~3.5]{Azumaya}, since the quotient $k$-algebra $A/J$ is separable, we have an isomorphism $U(A)_\bbQ \simeq U(k_1)_\bbQ \oplus \cdots \oplus U(k_n)_\bbQ$ in the $\bbQ$-linearization $\Hmo_0(k)_\bbQ$ of the additive category $\Hmo_0(k)$; consult \S\ref{sub:additive}. Thanks to Proposition \ref{prop:additive} and Lemma \ref{lem:extension1}, given any non-archimedean place $\nu \in \Sigma_k$, the associated functors $\mathrm{H}_{\mathrm{even}, l_\nu}(-\otimes_k k_\nu)$ and $\mathrm{H}_{\mathrm{odd}, l_\nu}(-\otimes_k k_\nu)$ are additive invariants. Since these additive invariants take values in the category $\mathrm{Gal}(\overline{k_\nu}/k_\nu)\text{-}\mathrm{Mod}$, which is a $\bbQ$-linear category, the above isomorphism combined with the factorization \eqref{eq:factorization} leads to the following isomorphisms of $\mathrm{Gal}(\overline{k_\nu}/k_\nu)$-modules:
\begin{eqnarray*}
\mathrm{H}_{\mathrm{even}, l_\nu}(A\otimes_k k_\nu)\simeq \bigoplus_{i=1}^n\mathrm{H}_{\mathrm{even}, l_\nu}(k_i\otimes_k k_\nu) && \mathrm{H}_{\mathrm{odd}, l_\nu}(A\otimes_k k_\nu)\simeq \bigoplus_{i=1}^n\mathrm{H}_{\mathrm{odd}, l_\nu}(k_i\otimes_k k_\nu)\,.
\end{eqnarray*} 
Making use of the canonical Morita equivalences $k_i \to \perf_\dg(\mathrm{Spec}(k_i))$, of Proposition \ref{prop:etale}, and of Definition \ref{def:L-functions}, we hence conclude that $L_{\mathrm{even}}(A;s)=\prod_{i=1}^n L_0(\mathrm{Spec}(k_i))$ and $L_{\mathrm{odd}}(A;s)=1$. This yields the implication $\sum_{i=1}^n \mathrm{R}_0(\mathrm{Spec}(k_i)) \Rightarrow \mathrm{R}_{\mathrm{even}}(A)$.

\medskip

\noindent
{\bf Proof of Theorem \ref{thm:finite2}.} Recall first that we have a canonical Morita equivalence $A\to \perf_\dg(A)$. Following Orlov \cite[Cor.~3.4]{Orlov1}, there exists a smooth proper $k$-scheme $X$ such that $\perf(A)$ is an admissible triangulated subcategory of $\perf(X)$. Moreover, since the quotient dg $k$-algebra $A/J_+$ is separable, we have a semi-orthogonal decomposition $\perf(X)=\langle \perf(D_1), \ldots, \perf(D_n)\rangle$, where $D_1, \ldots, D_n$ are separable division $k$-algebras. Let us denote by $k_1, \ldots, k_n$ the centers of the $k$-algebras $D_1, \ldots, D_n$; these are separable finite field extensions of $k$. Making use of \cite[Thm.~2.11]{Azumaya}, we hence conclude that $U(A)_\bbQ$ becomes a direct summand of $U(\perf_\dg(X))_\bbQ \simeq U(k_1)_\bbQ \oplus \cdots \oplus U(k_n)_\bbQ$ in the category $\Hmo_0(k)_\bbQ$. Note that, similarly to the proof of Theorem \ref{thm:finite1}, this implies that $L_{\mathrm{odd}}(A;s)=1$. Now, recall from \cite[\S4.9]{book} that the classical category of Artin motives (with $\bbQ$-coefficients) may be identified with the idempotent completion of the full subcategory of $\Hmo_0(k)_\bbQ$ consisting of the objects $\{U(\perf_\dg(X))_\bbQ\,|\,X \,\,\mathrm{is}\,\,\mathrm{a}\,\,0\text{-}\mathrm{dimensional}\,\,k\text{-}\mathrm{scheme}\}$. Under this identification, $U(A)_\bbQ$ corresponds to an Artin motive $\rho\colon \mathrm{Gal}(\overline{k}/k)\to \mathrm{GL}(V)$, where $V$ is a finite dimensional $\bbQ$-vector space, and the noncommutative $L$-function $L_{\mathrm{even}}(A;s)$ to the classical Artin $L$-function $L(\rho;s)$ of $\rho$. Consequently, the proof follows now from Weil's work \cite[\S V]{Weil-book}.


\begin{thebibliography}{00}

\bibitem{BK} A.~Bondal and M.~Kapranov, {\em Framed triangulated categories}. Mat. Sb. {\bf 181} (1990), no.~5, 669--683.

\bibitem{BO} A.~Bondal and D.~Orlov, {\em Semiorthogonal decomposition for algebraic varieties}. Available at \url{arXiv:alg-geom/9506012}.

\bibitem{Deligne} P.~Deligne, {\em  La conjecture de Weil II}. Publ. Math. Inst. Hautes \'Etudes Sci. {\bf 52} (1980), 137--252.

\bibitem{Weil1} \bysame, {\em La conjecture de Weil I}. Publ. Math. Inst. Hautes \'Etudes Sci. {\bf 43} (1974), 273--307.

\bibitem{Grothendieck} A.~Grothendieck, {\em Formule de Lefschetz et rationalit\'e des fonctions}. S\'eminaire Bourbaki, {\bf 9} (1966), expos\'e no. 279.

\bibitem{Huybrecht} D.~Huybrechts, {\em Fourier-Mukai transforms in algebraic geometry}. Oxford Mathematical Monographs, The Clarendon Press, Oxford University Press, Oxford, 2006.

\bibitem{Keller} B.~Keller, {\em On differential graded categories}. Proceedings of the International Congress of Mathematicians. Vol. {\bf II}, Eur. Math. Soc., Z\"urich, 2006, 151--190.

\bibitem{Hodge} M.~Kontsevich, {\em Hodge structures in non-commutative geometry}. XI Solomon Lefschetz Memorial Lecture series. Contemp. Math., {\bf 462}, Non-commutative geometry in mathematics and physics,
1--21, Amer. Math. Soc., Providence, RI, 2008.

\bibitem{IAS} \bysame, {\em Noncommutative motives}. Talk at the IAS on the occasion of the $61^{\mathrm{st}}$ birthday of Pierre Deligne (2005). Video available at the webpage \url{http://video.ias.edu/Geometry-and-Arithmetic}. 

\bibitem{ENS} \bysame, {\em Triangulated categories and geometry}. Course at \'Ecole Normale Sup\'erieure, Paris, 1998. Notes available at the webpage \url{www.math.uchicago.edu/mitya/langlands.html}.

\bibitem{KuznetsovCY} A.~Kuznetsov, {\em Calabi-Yau and fractional Calabi-Yau categories}. J. Reine Angew. Math. {\bf 753} (2019), 239--267.

\bibitem{KuznetsovICM} \bysame, {\em Semiorthogonal decompositions in algebraic geometry}. Proceedings of the International Congress of Mathematicians, Vol. {\bf II} (Seoul, 2014), 2014, pp. 635--660.

\bibitem{KuznetsovHPD} \bysame, {\em Homological projective duality}. Publ. Math. IH\'ES, no.~{\bf 105} (2007), 157--220.

\bibitem{Manin} Y.~Manin, {\em Lectures on zeta functions and motives}. Ast\'erisque {\bf 228} (1995), 121--163.

\bibitem{Artin} M.~Marcolli and G.~Tabuada, {\em Noncommutative Artin motives}. Selecta Math. (N.S.) {\bf 20} (2014), no.~1, 315--358.

\bibitem{Orlov2} D.~Orlov, {\em Finite-dimensional differential graded algebras and their geometric realizations}. Advances in Mathematics {\bf 366} (2020), 107096--33.

\bibitem{Orlov1} \bysame, {\em Smooth and proper noncommutative schemes and gluing of DG categories}. Advances in Mathematics {\bf 302} (2016), 59--105.

\bibitem{Riemann} B.~Riemann, {\em Ueber die Anzahl der Primzahlen unter einer gegebenen Gr\"osse}. Monatsberichte der Berliner Akademie (1859).

\bibitem{Serre1} J.-P.~Serre, {\em Facteurs locaux des fonctions z\^eta des variet\'es alg\'ebriques (d\'efinitions et conjectures)}. S\'eminaire Delange-Pisot-Poitou. 11e ann\'ee: 1969/70.

\bibitem{Serre2} \bysame, {\em Zeta and $L$-functions}. 1965 Arithmetical Algebraic Geometry (Proc. Conf. Purdue Univ., 1963) pp. 82--92. Harper \& Row, New York.

\bibitem{book} G.~Tabuada, {\em Noncommutative Motives}. With a preface by Yuri I. Manin. University Lecture Series, {\bf 63}. American Mathematical Society, Providence, RI, 2015.

\bibitem{Atiyah} G.~Tabuada and M.~Van den Bergh, {\em Motivic Atiyah-Segal completion theorem}. Available at \url{arXiv:2009.08448}.

\bibitem{Azumaya} \bysame, {\em Noncommutative motives of Azumaya algebras}. J. Inst. Math. Jussieu, {\bf 14} (2015), no.~2, 379--403. 

\bibitem{Thomas} R.~Thomas, {\em Notes on homological projective duality}. American Mathematical Society. Summer
Research Institute on Algebraic Geometry, 585--609.

\bibitem{Thomason-etale} R.~Thomason, {\em Algebraic $K$-theory and \'etale cohomology}. Ann. Sci. Ec. Norm. Sup. {\bf 18} (1985), 437--552.

\bibitem{Soule} C.~Soul\'e, {\em Operations on \'etale $K$-theory. Applications}. Algebraic $K$-Theory: Oberwolfach 1980. Springer Lecture Notes in Math. {\bf 966} (1982), 271--303.

\bibitem{Weil-book} A.~Weil, {\em Th\'eorie \'el\'ementaire des correspondences sur une courbe}. Sur les courbes alg\'ebriques et les vari\'et\'es qui s'en d\'eduisent, Hermann, 1948.

\end{thebibliography}
\end{document}

\end{proof}